\pgfplotsset{compat=1.17}
\newif\ifFULL
\newif\ifRuodu
  \renewcommand{\ge}{\geqslant}
  \renewcommand{\le}{\leqslant}
  \renewcommand{\epsilon}{\varepsilon}
\renewcommand{\d}{\mathrm{d}}
\newcommand{\p}{\mathbb{P}}
\newcommand{\E}{\mathbb{E}}    
\newcommand{\R}{\mathbb{R}}    
\newcommand{\N}{\mathbb{N}}    
\def\lawis{\buildrel \mathrm{d} \over \sim}
\newcommand{\id}{\mathds{1}}
\newcommand{\X}{\mathcal{X}}
\theoremstyle{plain}
\newtheorem{theorem}{Theorem} 
\newtheorem{corollary}{Corollary}
\newtheorem{lemma}{Lemma}
\newtheorem{proposition}{Proposition}
\theoremstyle{definition}
\theoremstyle{remark}
\newtheorem{remark}{Remark}
\renewcommand{\cite}{\citet}  
\title{Improved thresholds for e-values} 
\author{Christopher Blier-Wong\thanks{Department of Statistical Sciences,
  University of Toronto,
   Canada. \href{mailto:christopher.blierwong@utoronto.ca}{christopher.blierwong@utoronto.ca}.}
\and
  Ruodu Wang\thanks%
  {Department of Statistics and Actuarial Science,
  University of Waterloo,
   Canada. \href{mailto:wang@uwaterloo.ca}{wang@uwaterloo.ca}.}}
\begin{document}
\maketitle
 The rejection threshold used for e-values and e-processes is by default set to $1/\alpha$ for a guaranteed type-I error control at $\alpha$, based on Markov's and Ville's inequalities. This threshold can be wasteful in practical applications. We discuss how this threshold can be improved under additional distributional assumptions on the e-values; some of these assumptions are naturally plausible and empirically observable, without knowing explicitly the form or model of the e-values. For small values of $\alpha$, the threshold can roughly be improved (divided) by a factor of $2$ for decreasing or unimodal densities, and by a factor of $e$ for decreasing or unimodal-symmetric densities of log-transformed e-values. Moreover, we propose to use the supremum of comonotonic e-values, which is shown to preserve the type-I error guarantee. We also propose some preliminary methods to boost e-values in the e-BH procedure under some distributional assumptions while controlling the false discovery rate. Through a series of simulation studies, we demonstrate the effectiveness of our proposed methods in various testing scenarios, showing enhanced power.

\begin{abstract}

\textbf{Keywords}: Markov's inequality, e-variables, unimodality, calibrators, false discovery rate
\end{abstract}

\section{Introduction}

In statistical hypothesis testing, p-values and e-values summarize the evidence from a test against a null hypothesis. E-values have recently gained attention as a promising alternative to p-values, with inherent advantages over methods based on p-values, including validity in optional sampling and stopping, the ability to provide an intuitive notion of statistical significance, robustness to model misspecification, and flexibility within multiple testing procedures under dependence.
We refer to \cite{WRB20,S21,VW22,WR22} and \cite{GDK24} for advantages of e-values, and to \cite{RGVS23, ramdas2025hypothesisb} for recent developments.


Classic testing methods often use a p-value with a significance level $\alpha \in (0, 1)$, representing the allowed type-I error. A statistician often starts with a test statistic and designs a rejection region such that the probability of that test statistic belonging to the rejection region, under the null hypothesis, is at most $\alpha$. A simple analogous rejection region for e-values is $[1/\alpha, \infty)$ to guarantee a type-I error control of $\alpha$; the lower bound of the rejection region is based on Markov's inequality. We will refer to an e-test as a test that rejects when an e-value exceeds a certain threshold. Using a threshold of $1 / \alpha$ for e-tests is often wasteful in practice, as it is typically too conservative, resulting in a type-I error that is much smaller than the desired level $\alpha$, and thus a loss of power. By selectively using information about the distribution or dependence structure of e-values, we can increase the power of e-tests while maintaining type-I error control, striking a balance between safety and efficiency. 


The paper's main objective is to provide valid thresholds for e-values, under some distributional assumptions, that control the type-I error of e-tests. Including some distributional assumptions enables one to construct larger rejection regions (lower rejection thresholds), improving the power of e-tests while retaining some desirable properties and putting e-values on a more comparable scale to p-values. 

If one knows the true distribution of the e-variable under the null, one may design an e-test to obtain exact type-I error control by inverting the cumulative distribution function of the e-variable; this would be equivalent to an approach based on p-values. However, some e-variables are constructed in a complicated way, where the distribution of the e-variable is challenging to study; yet, we may be confident about specific features. We provide two examples below. In both cases, 
their detailed construction is described in Appendix \ref{app:constr-e-intro}.

Suppose that we have a high-dimensional dataset and wish to identify informative variables for a target response through variable selection. A flexible approach to accomplish this task is the model-X knockoffs (see \cite{barber2015controlling} and \cite{candes2018panning}), which control the false discovery rate (FDR). There are many ways to construct knockoffs, but identifying the resulting distribution of the knockoffs is not straightforward. \cite{RB24} showed that knockoffs are e-values in a slightly more general sense, called compound e-values by \cite{IWR24}. We may improve the threshold of e-tests on knockoffs with distributional information on the shape of their distribution. In Figure \ref{fig:knockoff-e-values}, we present a histogram of 10,000
knockoffs from linear regression when the number of variables is 80 and the number of non-null variables is 0 (left) and 8 (right). In both cases, the knockoffs appear to have a single mode, implying that the e-values also have a single mode. 


\begin{figure}
    \centering

    \includegraphics[width = \textwidth]{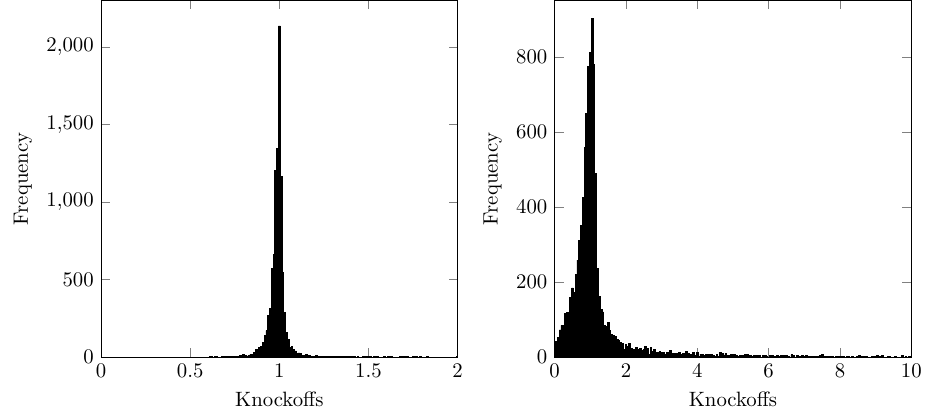}
    \caption{Histogram of null knockoffs, which are e-values up to a constant. The left is based on knockoffs where all the nulls in the dataset are true; the right is based on knockoffs where 10\% of variables are non-null.}
    \label{fig:knockoff-e-values}
\end{figure}

Universal inference, proposed by \cite{WRB20}, is a powerful method for developing hypothesis tests that provide finite-sample guarantees on the type-I error. This technique is especially useful for situations where classical p-tests are not feasible, such as when dealing with composite null hypotheses in irregular statistical models, provided that a maximum likelihood estimator can be computed. Assume we have iid observations $X_1, \dots, X_n \sim Q$ and want to test the composite null $Q = {\rm N}(0, 1)$ against the mixture $Q = 0.5{\rm N}(\mu_1, 1) + 0.5{\rm N}(\mu_2, 1)$ for unknown $(\mu_1, \mu_2)$. A split likelihood-ratio test statistic may be used to construct a valid e-test; see \cite{WRB20}. In Figure \ref{fig:ui-e-values}, we present a histogram of 10,000 e-values and log-transformed e-values. Even if the shape of the histogram of e-values seems complicated, we can observe that its density is decreasing for e-values larger than one, and the density of the log-transformed e-values is decreasing for log e-values larger than zero. With this distributional knowledge, we can improve the e-test's threshold without knowing the true distribution of the null e-variable. \cite{tse2022note} also proposed a method to improve the threshold of universal inference tests based on the asymptotic distribution of universal inference e-values, but the finite-sample type-I error control is not guaranteed theoretically. 

\begin{figure}
    \centering
    \includegraphics[width = \textwidth]{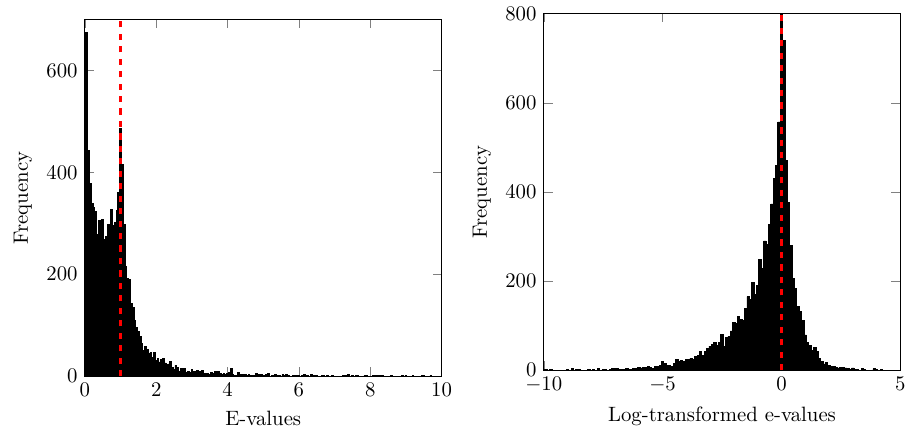}
    \caption{Histogram of universal inference e-values and log e-values. The red line is at $1$ on the left plot and $0$ on the right plot, and the density decreases after the red lines.}
    \label{fig:ui-e-values}
\end{figure}

The rest of the paper is dedicated to obtaining improved thresholds that can be useful in situations like the two examples above and many others. 
Section \ref{sec:2} provides preliminaries on e-values and their thresholds, as well as some basic results.
Section \ref{sec:3} contains our main results on the improved thresholds for e-values under various conditions, including decreasing and unimodal densities, as well as conditions on densities or random variables after a log-transformation. The most useful results reveal that, for small values of $\alpha$, the threshold can roughly be improved (divided) by a factor of $2$ for decreasing (only required on $[1,\infty)$) or unimodal densities,
and by a factor of $e=\exp\{1\}$ for decreasing (only required on $[0,\infty)$) or unimodal-symmetric densities of the log-transformed e-variable. Section \ref{sec:perturbation} tells us that, if an improved threshold applies to one set of distributions, then it also applies approximately to another set that is close to it.

In Section \ref{sec:como}, we propose tests with valid type-I errors based on taking the supremum over a set of e-variables. 
Section \ref{sec:combined} provides some improved thresholds for stopped e-processes, under the assumption that the stopping time is predictable.
Section \ref{sec:boosting} discusses methods to boost e-values in the e-BH procedure for multiple hypothesis testing with distributional assumptions.
We acknowledge that the results  in Sections \ref{sec:combined} and \ref{sec:boosting} are restrictive as they require strong assumptions; they primarily serve as an exploration of possible improvements in stopped e-processes and multiple testing, a topic of great importance and challenge.
Section \ref{sec:simulation} presents simulation studies that demonstrate the enhanced power achieved by using the improved thresholds. Section \ref{sec:concl} concludes.
We present a few intuitive proofs in the main text, while additional proofs and other results are included in the appendices.

\section{Thresholds for e-values: preliminaries}
\label{sec:2}

\subsection{E-variables and thresholds for e-tests}
We begin by describing the basic setting and terminology of our paper. 
 Our terminology follows  \cite{VW21}. Throughout, fix a sample space.
  A hypothesis is a collection $H$ of probability measures on the sample space.  
An \emph{e-variable} $E$ for a hypothesis $H$ is a $[0,\infty]$-valued random variable satisfying $\E^Q[E]\le1$ for all $Q\in H$. For the general background on e-values in hypothesis testing, see \cite{S21}, \cite{ramdas2025hypothesisb}, \cite{VW21} and \cite{GDK24}.
E-variables are often obtained from stopping an \emph{e-process} $(E_t)_{t \geq 0}$, which is a nonnegative stochastic process adapted to a pre-specified filtration such that $\mathbb{E}^Q[E_\tau] \leq 1$ for any stopping time $\tau$ and any $Q\in H$. 
Realizations of 
e-variables are referred to as 
e-values. 
 
 We will also encounter p-values and p-variables. A \emph{p-variable} $P$ for a hypothesis $H$ is a  random variable  that satisfies $Q(P\le \alpha)\le \alpha$ for all $\alpha \in (0,1)$ and all $Q\in H$.  

We fix a probability measure $\p$. All e-variables and p-variables are for the hypothesis $\{\p\}$, and the expectation $\E$ is with respect to $\p$. To study properties and thresholds of e-variables and p-variables, such an assumption is without loss of generality; see the explanation in \cite{VW21}. 
Denote by $\mathcal E_0$ the set of all e-variables.
All terms such as ``increasing'' and ``decreasing'' are in the non-strict sense.
We write $a\wedge b=\min\{a,b\}$ and $a\vee b=\max\{a,b\}$ for real numbers $a,b$. 

Below, we describe the main quantity of interest, the worst-case type-I error for a given set $\mathcal E$ of e-variables. 
The set $\mathcal E$ will be assumed to satisfy some distributional conditions to be specified later.  
Our goal is to find  the quantity  $R_{\gamma}(\mathcal E)$ for $\gamma >0 $, defined by
$$
R_{\gamma}(\mathcal E)= \sup_{E\in \mathcal E} \p(E\ge 1/\gamma ),
$$  
that is, the largest probability that $\p(E\ge 1/\gamma )$ can attain for $E\in \mathcal E$.
Markov's inequality gives 
$$
 \p(E\ge 1/\alpha) \le \alpha \mbox{~~~for all $\alpha \in (0,1]$ and all e-variables $E$},
 $$ 
 and moreover, the inequality is attainable. Hence, for any set $\mathcal E$ of e-variables, it holds that $R_{\gamma}(\mathcal E) \le  R_{\gamma}(\mathcal E_0)=\gamma$ for  $\gamma\in (0,1]$. 

 We are interested only in the case  $\gamma\in (0,1]$. For $\gamma>1$,
 it is usually the case that $R_{\gamma}(\mathcal E)=1$ because for most classes $\mathcal E$ that we consider, either $1\in \mathcal E$ or $1$ is the limit of elements of $\mathcal E$. Further, the usual interpretation of e-values does not permit rejection of the null hypothesis with a threshold less than $1$.

Markov's inequality is often seen as being quite conservative in statistical practice, 
because for an e-variable $E$, $ \p(E\ge 1/\alpha) = \alpha $  if and only if $E$ has a two-point distribution such that $\p(E=1/\alpha)=\alpha = 1-\p(E=0)$. 
Such an e-variable is unlikely to be observed in practice unless it is obtained by an algorithm which stops an e-process when it reaches $1/\alpha$.
This suggests that there is room to improve $R_{\gamma}(\mathcal E)$ under additional conditions. 

To run an e-test at level $\alpha \in (0,1)$ for the e-variable $E$,   
one needs to find a threshold $t>0$,   the smaller the better, such that 
 $
 \p(E \ge t) \le \alpha.
  $
  For this, we intuitively should use  the smallest $t$ such that 
 $R_{1/t}(\mathcal E)\le \alpha$, which satisfies  $R_{1/t}(\mathcal E)=\alpha$
in case   $\gamma\mapsto R_{\gamma}(\mathcal E)$  is continuous.
We only consider $t\ge 1$, corresponding to $\gamma\le 1$.

 \subsection{Basic properties}
\label{sec:R1-1}

The following result gives an equivalent formulation of the smallest threshold  $t$   for a set of e-variables.  Denote by $q_\beta(X)$ the left $(1-\beta)$-quantile  function of $X$, that is, 
 $$q_\beta(X) =\inf \{x\in \R: \p(X\le x)\ge 1-\beta\} \mbox{~~~for $\beta \in (0,1)$}.$$
 The intuition is that the threshold for a set of e-variables is the supremum of the corresponding quantiles, just like the threshold for one test statistic, which should be chosen as its quantile at a given level.
 \begin{lemma}\label{lem:1}
 For $\alpha \in (0,1)$, the quantity
 $ 
T_{\alpha}(\mathcal E) : = \inf\{t\ge 1: R_{1/t}(\mathcal E) \le \alpha\}
 $ satisfies 
$$T_{\alpha}(\mathcal E)= \left( \sup_{E\in \mathcal E} q_{\alpha}(E)\right)\vee 1.$$
If $\gamma\mapsto R_{\gamma}(\mathcal E)$  is continuous, then 
 $T_{\alpha}(\mathcal E)$
 is the smallest real number $t\ge 1$ such that  $ \p(E \ge t) \le \alpha$ for all $E\in \mathcal E$.  
 \end{lemma}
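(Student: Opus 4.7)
The plan is to reduce the statement to a single e-variable first, and then lift to the supremum over $\mathcal E$. For a single $E$, the key identity I would establish is
\[
\inf\{t\in\mathbb R : \p(E\ge t)\le \alpha\} \;=\; q_\alpha(E).
\]
To see this, I would rewrite $\{\p(E\ge t)\le \alpha\}$ as $\{\p(E<t)\ge 1-\alpha\}$ and let $A(E)=\{t:\p(E<t)\ge 1-\alpha\}$. Since $t\mapsto \p(E<t)=F_E(t-)$ is left-continuous and nondecreasing, $A(E)$ is an upward-closed interval whose infimum $t^*(E)$ satisfies, by passing to the right limit, $\p(E\le t^*(E))=\lim_{s\downarrow t^*(E)}\p(E<s)\ge 1-\alpha$; hence $t^*(E)\ge q_\alpha(E)$. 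Conversely, for any $s>q_\alpha(E)$ we have $\p(E<s)\ge \p(E\le q_\alpha(E))\ge 1-\alpha$, so $s\in A(E)$, giving $t^*(E)\le q_\alpha(E)$.

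Next I would promote this to a set of e-variables. Since $R_{1/t}(\mathcal E)=\sup_{E\in\mathcal E}\p(E\ge t)$, the condition $R_{1/t}(\mathcal E)\le \alpha$ is equivalent to $t\in \bigcap_{E\in\mathcal E} \{s:\p(E\ge s)\le \alpha\}$. Each of these sets is either $[q_\alpha(E),\infty)$ or $(q_\alpha(E),\infty)$, so their intersection lies between $(\sup_E q_\alpha(E),\infty)$ and $[\sup_E q_\alpha(E),\infty)$; in either case its infimum is $\sup_{E\in\mathcal E} q_\alpha(E)$. Intersecting with $[1,\infty)$ from the outer infimum definition of $T_\alpha$ introduces the $\vee 1$, yielding the displayed formula.

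For the second statement, I would argue that continuity of $\gamma\mapsto R_\gamma(\mathcal E)$ on $(0,1]$ translates into continuity of $t\mapsto R_{1/t}(\mathcal E)$ on $[1,\infty)$. Hence $\{t\ge 1: R_{1/t}(\mathcal E)\le \alpha\}$ is the preimage of a closed set under a continuous function and is therefore closed, so the infimum defining $T_\alpha(\mathcal E)$ is attained; this gives exactly the smallest $t\ge 1$ with $\p(E\ge t)\le \alpha$ for all $E\in\mathcal E$.

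The only mildly delicate point is the discrepancy between $\p(E\ge t)$ and $\p(E>t)$ caused by possible atoms of $E$, which is what forces the ``infimum, not necessarily minimum'' phrasing in the general case. Once the equivalence $\{\p(E\ge t)\le \alpha\}=\{\p(E<t)\ge 1-\alpha\}$ is used and the appropriate one-sided continuity of $F_E$ is invoked, everything else is bookkeeping; continuity of $R_\gamma$ rules out this issue and delivers the second statement.
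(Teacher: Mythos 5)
Your proof is correct. It differs from the paper's in mechanics though not in substance: the paper obtains the lower bound $T_\alpha(\mathcal E)\ge(\sup_{E\in\mathcal E} q_\alpha(E))\vee 1$ by comparing with the strict-inequality set $\{t:\p(E>t)\le\alpha \text{ for all } E\}$ and then closes the gap between $\p(E\ge t)$ and $\p(E>t)$ with an $\epsilon$-shift argument (bounding $T_\alpha(\mathcal E)-\epsilon$ and letting $\epsilon\downarrow 0$), whereas you identify $\inf\{t:\p(E\ge t)\le\alpha\}=q_\alpha(E)$ exactly for a single $E$ by exploiting the left-continuity of $t\mapsto\p(E<t)$ together with upward-closedness of $A(E)$, and then pass to the intersection over $\mathcal E$. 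Your route is arguably cleaner: it isolates the atom issue (the only delicate point, as you note) once in the single-variable identity, and it proves rather than asserts the quantile characterization that the paper invokes in its first display. One small detail to add: for the infimum of $\bigcap_{E\in\mathcal E}A(E)$ to equal $\sup_{E\in\mathcal E} q_\alpha(E)$ you need that intersection to be nonempty, equivalently $\sup_{E\in\mathcal E} q_\alpha(E)<\infty$; this follows from Markov's inequality, which gives $q_\alpha(E)\le 1/\alpha$ for every e-variable (the paper records the analogous nonemptiness of $\{t\ge 1: R_{1/t}(\mathcal E)\le\alpha\}$ at the start of its proof). Your continuity argument for the second statement --- the constraint set is a closed subset of $[1,\infty)$, nonempty and bounded below, so the infimum is attained --- is valid and matches the paper's conclusion.
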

   \begin{proof} 
   Since $ R_{\gamma}(\mathcal E)\le \gamma$ for $\gamma \in (0,1]$, 
   we have  that $\{t\ge 1: R_{1/t}(\mathcal E) \le \alpha\}$ is not empty.
Note that 
\begin{align*}
T_\alpha(\mathcal E)  &  \ge \inf \{ t\ge 1 :   \p(E > t) \le \alpha \mbox{ for all $E\in \mathcal E$} \}  
 = \left( \sup_{E\in \mathcal E} q_{\alpha}(E)\right)\vee 1.
\end{align*}
Take any $\epsilon\in (0,1)$. We have
 \begin{align*}
T_{\alpha}(\mathcal E) -\epsilon  &  =  \inf \{t\ge 1-\epsilon : R_{1/(t+\epsilon)}(\mathcal E) \le \alpha\}   
\\& = \inf \left\{ t\ge 1 -\epsilon :    \sup_{E\in \mathcal E} \p(E \ge  t +\epsilon) \le \alpha  \right\}  
\\& = \inf   \{ t \ge 1  -\epsilon :     \p(E \ge  t+\epsilon) \le \alpha \mbox{ for all $E\in \mathcal E$} \}  
 \\&\le \inf \{ t\ge 1  -\epsilon :   \p(E > t ) \le \alpha \mbox{ for all $E\in \mathcal E$} \}  
\\ &  = \left(\sup_{E\in \mathcal E} q_{\alpha}(E)\right)\vee (1-\epsilon) \le  \left( \sup_{E\in \mathcal E} q_{\alpha}(E)\right)\vee 1\le T_\alpha(\mathcal E) .
\end{align*}
Since $\epsilon\in (0,1)$ is arbitrary, we have $T_\alpha(\mathcal E) =\left( \sup_{E\in \mathcal E} q_{\alpha}(E)\right)\vee 1$, showing  the first statement. 
If  $\gamma\mapsto R_{\gamma}(\mathcal E)$  is continuous, then 
 \begin{align*}
T_{\alpha}(\mathcal E)  &  = \min\{t\ge 1 : R_{1/t}(\mathcal E) \le \alpha\} 
  = \min \{ t \ge 1 :     \p(E \ge  t) \le \alpha \mbox{ for all $E\in \mathcal E$} \},
\end{align*}  
showing the second statement.
 \end{proof}
  \begin{remark}
  In the last statement of Lemma \ref{lem:1}, the condition that $\gamma\mapsto R_{\gamma}(\mathcal E)$ is continuous is essential. In general, there may not exist a smallest $t\ge 1$ such that  $ \p(E \ge t) \le \alpha$ for all $E\in \mathcal E$; this is the case even when $\mathcal E$ is a singleton, such as $\mathcal E=\{1\}$, for which we can see that $T_\alpha(\mathcal E)=1$ but $ \p(E \ge  T_\alpha(\mathcal E) )   =1$ for each $\alpha \in(0,1)$.
  \end{remark}

 \cite{VW21}
  studied e-to-p calibrators, which are mappings that convert any e-value into a p-value.
  Our function $\gamma\mapsto R_{1/\gamma}(\mathcal E)$  refines this concept.  
For a subset $\mathcal E$
  of e-variables, we say that a function $f:[0,\infty]\to[0,\infty)$ is an e-to-p calibrator on $\mathcal E$ if $f$ is decreasing and 
  $f(E)$ is a p-variable for all $E\in\mathcal E$.
  \cite{VW21} showed that 
  $x\mapsto (1/x)\wedge 1$
  is the only useful e-to-p 
 calibrator on $\mathcal E_0$ in the sense of being the smallest. 
  For various subsets  $\mathcal E$ of  $\mathcal E_0$, we can find better e-to-p 
 calibrators based on $R_{1/\gamma}(\mathcal E)$ than $x\mapsto (1/x)\wedge 1$.
  Moreover,
  the following result implies that any class $\mathcal E$ admits a smallest e-to-p 
 calibrator. This is in sharp contrast to the set of calibrators from p-values to e-values, which does not admit a smallest element, as well discussed by \cite{S21} and \cite{VW21}. 

\begin{theorem}\label{th:calibrator}
    The function $x \mapsto R_{1/x }\left(\mathcal{E}\right)$ on $[0,\infty]$ is an e-to-p  calibrator on $\mathcal{E}$, and it is the smallest such calibrator. 
\end{theorem}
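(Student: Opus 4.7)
Write $f(x) := R_{1/x}(\mathcal{E}) = \sup_{E \in \mathcal{E}} \p(E \ge x)$ for $x \in [0,\infty]$. The plan is to verify the two calibrator axioms for $f$ and, in a separate short step, establish pointwise minimality against an arbitrary competing calibrator. Monotonicity of $f$ is immediate, since each map $x \mapsto \p(E \ge x)$ is decreasing and the supremum of decreasing functions is decreasing.

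For the p-variable property I would fix $E \in \mathcal{E}$ and $\alpha \in (0,1)$ and exploit the fact that, since $f$ is decreasing, the sublevel set $\{x : f(x) \le \alpha\}$ is an upper set in $[0,\infty]$, hence equal to $[t,\infty]$ or $(t,\infty]$ with $t := \inf\{x : f(x) \le \alpha\}$. In the closed case, $f(t) \le \alpha$ gives $\p(f(E) \le \alpha) = \p(E \ge t) \le f(t) \le \alpha$ directly. In the open case, $\p(f(E) \le \alpha) = \p(E > t) = \lim_{x \downarrow t} \p(E \ge x) \le \lim_{x \downarrow t} f(x) \le \alpha$ by continuity of $\p$ from above, since $f(x) \le \alpha$ for every $x > t$.

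For minimality, let $g : [0,\infty] \to [0,\infty)$ be any e-to-p calibrator on $\mathcal{E}$, and fix $x \in [0,\infty]$ and $E \in \mathcal{E}$. Monotonicity of $g$ gives $\{E \ge x\} \subseteq \{g(E) \le g(x)\}$, and the p-variable property of $g(E)$ gives $\p(g(E) \le g(x)) \le g(x)$; combining yields $\p(E \ge x) \le g(x)$, and taking the supremum over $E \in \mathcal{E}$ delivers $f(x) \le g(x)$, as required.

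The only nontrivial bookkeeping is the closed-vs-open case split in the p-variable step: $x \mapsto \p(E \ge x)$ is left-continuous but generally not right-continuous, and taking a supremum does not recover right-continuity, so $f$ itself need not be right-continuous at $t$ and one cannot simply assume the closed case. Continuity of $\p$ from above handles this cleanly, and the minimality half is essentially a one-line monotonicity-plus-p-variable chain, so I do not anticipate any real obstacle.
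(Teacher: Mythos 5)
Your proof is correct, and your minimality step is exactly the paper's argument (you state it directly where the paper argues by contradiction): monotonicity gives $\{E\ge x\}\subseteq\{g(E)\le g(x)\}$ and the p-variable property of $g(E)$ closes the loop. For the calibrator half, the paper instead notes $R_{1/x}(\mathcal{E})\ge \p(E\ge x)=F_{-E}(-x)$ and invokes the standard fact that $F_{-E}(-E)$ is a p-variable; your sublevel-set case split with continuity of $\p$ from above is essentially an inlined, self-contained proof of that same fact, so the two routes coincide in substance.
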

\begin{proof}
    We first show that the function $g:x \mapsto R_{1/x}\left(\mathcal{E}\right) $ is an e-to-p calibrator on $\mathcal E$.
    This means
    $
    \p(g(E)\le \alpha) \le\alpha 
    $
    for all $\alpha \in (0,1)$ and $E\in \mathcal E$.
    By definition, for $x\in [0,\infty]$ and $E\in\mathcal E$, 
    \begin{align*}
        g( x) 
       \ge  \p(E\ge x ) 
    = \p(-E\le -x
        ) =  F_{-E}(-x)
        ,
    \end{align*}
where $F_{X}$ is the cdf of a random variable $X$. Since $F_{X}(X)$ is a p-variable for any random variable $X$, we have 
$\p(F_{-E}(-E)\le \alpha )\le \alpha$ for all $E\in \mathcal E$. 
Therefore, $\p(g(E)\le \alpha) \le \p(F_{-E}(-E)\le\alpha)\le \alpha$ for all $\alpha \in (0,1)$. 

    We now prove that $g$ is the smallest calibrator on $\mathcal{E}$. Suppose 
 that $f$ is another calibrator on $\mathcal{E}$ such that  $f(x)< g(x)$ for at least one $x\in [1,\infty]$.  By definition, there exists   $E\in \mathcal E$
 such that  $
        f(x) < \p(E \geq x).
 $
    This implies  $\p(f(E) \leq f(x)) \geq \p(E \geq x)> f(x),$ which violates the definition of $f(E)$ as a p-variable.
\end{proof}

Theorem \ref{th:calibrator} implies that by computing $R_{\gamma}(\mathcal E)$ or an upper bound on it, we can find calibrators to convert e-values  realized by elements of $\mathcal E$ into p-values, which work better than the e-to-p 
 calibrator $x\mapsto (1/x)\wedge 1$ on $\mathcal E_0$. This can be useful in procedures that take p-values as input, in e.g., that of \cite{BH95}.

We make a useful observation on the probability bound  $R_\gamma(\mathcal E)$ and the threshold $T_\alpha(\mathcal E)$ regarding mixture distributions, which will be useful for our later results on e-processes. 

\begin{proposition}
\label{prop:R1-1}
For a class $(E_\theta)_{\theta\in \Theta}$ of e-variables in $\mathcal E$ and a probability measure $\pi$ on $\Theta$, 
let $E_\pi$ follow the mixture  distribution, that is, 
$\p(E_\pi\le x) = \int_\Theta \p(E_\theta\le x) \pi (\d \theta)$ for $x\in \R$.  
Then $\p(E_\pi \ge 1/\gamma) \le R_\gamma(\mathcal E)$. 
\end{proposition}
In other words, the probability bounds and thresholds obtained for $\mathcal E$ also apply to the convex hull of $\mathcal E$ with respect to distribution mixtures.

 \section{Improved thresholds   under distributional information}
 \label{sec:3}

In this section, we study the improved e-thresholds under some conditions on the density function or distribution function of the e-variable by computing $R_{\gamma}(\mathcal E)$ or an upper bound on it.

\subsection{Decreasing and unimodal densities}
 
We first consider e-variables 
that have a decreasing density on their support, a decreasing density for values above 1, or a unimodal density. Denote by 
\begin{align*}
\mathcal E_{\rm D} &=\{E\in\mathcal E_0: \mbox{$E$  has a decreasing density on its support}\},\\    
\mathcal E_{\rm D>1} &=\{E\in\mathcal E_0: \mbox{$E$ has a decreasing density over $[1, \infty)$}\},\\
\mathcal E_{\rm U} &=\{E\in\mathcal E_0: \mbox{$E$  has a unimodal density on $[0,\infty)$}\}.
\end{align*}

The set $\mathcal E_{\rm D}$ contains, for instance,  exponential and Pareto distributions. 
Whenever we consider a density, we include its limit case; that is, we allow $E\in \mathcal E_{\rm D}$ to have a point mass at the left endpoint of its support. This convention does not change any results but simplifies many arguments in the proofs. 

The set $\mathcal E_{\rm D>1}$ is the set of e-variables with a decreasing density over $[1, \infty)$, but an arbitrary density over $[0, 1)$. Such e-variables are relevant to e-testing since e-values between 0 and 1 provide no evidence against the null; e-values above 1 are the ones that cause type-I errors, as they may appear in the rejection region of an e-test.

A random variable on $[0,\infty)$ is said to have a unimodal density, or a unimodal distribution, if it has a density function 
that is increasing on $[0,x]$ and decreasing on $[x,\infty)$ for some constant $x\in [0,\infty)$, 
and it can have a point mass at $x$. Such $x$ is called a mode of the random variable. 
The set of all modes of $E\in\mathcal E_{\rm U}$ may be a singleton or an interval.
The set $\mathcal E_{\rm U}$ contains, for instance, log-normal and gamma distributions.
Moreover, by Proposition \ref{prop:R1-1}, our obtained bounds also apply to the mixtures of the distributions within each class.
Note that $\mathcal E_{\rm D}\subseteq \mathcal E_{\rm D>1} $ and $\mathcal E_{\rm D}\subseteq \mathcal E_{\rm U}$
and hence $R_\gamma (\mathcal E_{\rm U}   ) \ge  R_\gamma (\mathcal E_{\rm D}   )$ and $R_\gamma (\mathcal E_{\rm D>1}   ) \ge  R_\gamma (\mathcal E_{\rm D}   )$ for all $\gamma \in (0,1]$. 

The proof techniques of the following result are similar to those in \cite{LSWY18}, \cite{BKV20}  and \cite{W24} for computing bounds on probabilities with given distributional information. \cite{H19} has different results on halving the bound in Markov's inequality via smoothing.

\begin{theorem}\label{th:dec-uni}
For $\gamma \in (0,1]$,
\begin{enumerate}
\item [(i)]
$R_\gamma ({\mathcal E_{\rm D}}  ) =\gamma /2$ if $\gamma \ne 1$ and $R_1 ({\mathcal E_{\rm D}}  ) = 1$;
\item [(ii)]    
    $R_\gamma(\mathcal{E}_{\rm D>1}) =  {\gamma }/({1+ \sqrt{1-\gamma^2}})$; 
    \item[(iii)]  
$R_\gamma (\mathcal E_{\rm U}   ) =(\gamma /2)\vee (2\gamma -1)$.
\end{enumerate}
\end{theorem}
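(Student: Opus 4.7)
The plan is to handle each part as a constrained optimization via Khintchine-type representations of the corresponding density class, reducing the supremum to a linear program with one moment constraint and a normalization constraint. Matching lower bounds then come from explicit extremal e-variables.

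For part (i), I would apply Khintchine's theorem: any $E \in \mathcal{E}_{\rm D}$ with support having left endpoint $a \geq 0$ admits the representation $E \laweq a + UY$, where $U \sim U[0,1]$ and $Y \geq 0$ is independent of $U$, with the point-mass convention corresponding to $\p(Y = 0) > 0$. The e-variable constraint becomes $a + \E[Y]/2 \leq 1$, and $\p(E \geq t) = \E[(1 - (t-a)/Y)_+]$ is linear in the law of $Y$. Extremality forces $Y$ onto two atoms $\{0, y\}$, and solving the resulting bivariate problem yields $(1-a)/(2(t-a))$, maximized at $a = 0$ for $t > 1$ and giving $\gamma/2$. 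The case $\gamma = 1$ is handled separately by the constant e-variable $E \equiv 1$, which lies in $\mathcal{E}_{\rm D}$ by convention. A matching example is $E = 0$ with probability $1 - \gamma$ and $E \sim U[0, 2/\gamma]$ otherwise.

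For part (ii), set $p = \p(E \geq 1)$ and decompose $E$ into its contributions on $[0,1)$ and $[1,\infty)$. A key observation is that, for a fixed distribution on $[1,\infty)$, replacing the below-$1$ part by a point mass at $0$ is optimal, since this preserves both $p$ and $\p(E \geq t)$ while freeing the maximum mean budget for the above-$1$ part. Applying Khintchine to the conditional density on $[1,\infty)$ (which is decreasing there) yields $E \mid \{E \geq 1\} \laweq 1 + UY$, and the constraint reduces to $p(1 + \E[Y]/2) \leq 1$. Solving the LP in $Y$ (which extremizes to a deterministic value in the relevant regime) and then optimizing in $p$ produces the critical $p = 1 - \sqrt{(t-1)/(t+1)}$ and the value $\p(E \geq t) = 1/(t+\sqrt{t^2-1}) = \gamma/(1+\sqrt{1-\gamma^2})$; the extremizer is $E \sim U[1, t+\sqrt{t^2-1}]$ on $\{E \geq 1\}$ and $E = 0$ otherwise.

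For part (iii), I would use the Khintchine representation of unimodal distributions: any $E \in \mathcal{E}_{\rm U}$ with mode $m$ is a mixture of uniform laws $U[A, B]$ satisfying $0 \leq A \leq m \leq B$ (allowing $A = B$ to represent a point mass at $m$). The supremum then becomes: maximize $\E[(B-t)_+/(B-A)]$ subject to $\E[(A+B)/2] \leq 1$, a linear problem whose extremum is supported on at most two $(A,B)$ atoms. Two canonical regimes emerge: (a) mode at $0$, reducing to part (i) and yielding $\gamma/2$ via the same extremizer; and (b) mode at $t$, combining a point mass at $t$ of weight $2\gamma - 1$ with $U[0,t]$ of weight $2(1-\gamma)$, yielding $\p(E \geq t) = 2\gamma - 1$ (valid when $\gamma \geq 1/2$). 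Taking the maximum gives $(\gamma/2) \vee (2\gamma - 1)$. The main obstacle is verifying completeness of this case analysis, i.e., excluding other two-atom configurations (notably mixtures of two non-degenerate uniforms sharing an interior mode); this is likely cleanest via a dual-LP certificate producing the piecewise-linear bound $(\gamma/2) \vee (2\gamma - 1)$ uniformly over all $(a,b)$ with $0 \leq a \leq m \leq b$, or via a direct reduction showing that any two-uniform extremizer can be transformed into one of the canonical forms without decreasing the objective.
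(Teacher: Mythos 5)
Your parts (i) and (ii) are correct and reach exactly the paper's extremal distributions (point mass at the left endpoint plus a uniform layer, with $b^*=2z-a$ in (i) and $b=z+\sqrt{z^2-1}$ in (ii)), but by a genuinely different route. The paper proceeds by explicit mass-shifting: it constructs a chain of auxiliary e-variables that flatten the density while preserving $\p(E\ge z)$ and decreasing the mean, and then optimizes over a two-parameter family by calculus. You instead invoke Khintchine's representation ($E\laweq a+UY$ with $U\sim U[0,1]$ independent of $Y$), which converts the supremum into a moment-constrained linear program over the law of $Y$ whose extreme points are two-atom measures. This buys a more systematic reduction (the two-point structure comes for free from LP extreme-point theory rather than from ad hoc rearrangement), at the cost of importing Khintchine's theorem and of a couple of steps you state briefly but correctly: in (ii), that an atom of $Y$ at $0$ (i.e., mass of $E$ at $1$) is dominated by moving that mass to the point mass at $0$, and that the remaining productive weight concentrates on the single $y$ maximizing the ratio of $\p(U[1,1+y]\ge t)$ to $1+y/2$. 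Your numerical identities ($p=1-\sqrt{(t-1)/(t+1)}=2/(t+1+\sqrt{t^2-1})$, value $t-\sqrt{t^2-1}=\gamma/(1+\sqrt{1-\gamma^2})$) check out. The paper's treatment is more elementary and self-contained; yours generalizes more readily to other shape constraints.

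Part (iii) contains a genuine gap, which you yourself flag: after reducing to mixtures of uniforms $U[A,B]$ with $0\le A\le m\le B$ and invoking the two-atom extreme-point structure, you do not actually rule out the non-canonical two-atom configurations (two non-degenerate uniforms straddling an interior mode $m$ with $0<m<t$ or $m>t$), and the naive pointwise dual certificate $\p(U[A,B]\ge t)\le \E[U[A,B]]/(2t)$ is false without the common-mode constraint (take $A=1$, $B=2t$), so the certificate must be built mode-by-mode and then optimized over $m$ --- this is precisely where the work lies and it is not done. The paper closes exactly this step by a concrete reduction: it splits on $z\le m$ versus $z>m$ and, in the second case, runs a chain of four mean-decreasing, tail-preserving modifications $E_1,\dots,E_4$ that lands every unimodal e-variable either in $\mathcal E_{\rm D}$ (giving $\gamma/2$ via part (i)) or in the family ``uniform on $[0,z)$ plus point mass at $z$'' (giving $2\gamma-1$). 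Your two canonical regimes and the final answer $(\gamma/2)\vee(2\gamma-1)$ are right, and either of the two repair strategies you name would work, but as written the upper bound in (iii) is not proved.
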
 

\begin{proof}


We only present the proof of part (i) here, which is the simplest, but 
it illustrates the basic techniques of proving more complicated statements. The proof of parts (ii) and (iii) are relegated to the appendices. 
We first show $\p(E\ge 1/\gamma )\le \gamma/2$ for each $E\in \mathcal E_{\rm D}$ and $\gamma \in (0, 1)$.
Let $f$ be the density function of $E$ and let $a$ be the left end-point of the support of $E$. For a fixed $z > 1$, set $b := z + \p(E \ge z)/f(z)$.
We construct an e-variable $E_0$ with density $g_0$ such that 
(a) $g_0(x) = f(z)$ for $a < x < b$;
(b) $E_0$ has a point mass at $a$ with probability 
$1 - \int_a^\infty g_0(x) \d x$.

\begin{figure}[ht]
    \centering
    \includegraphics{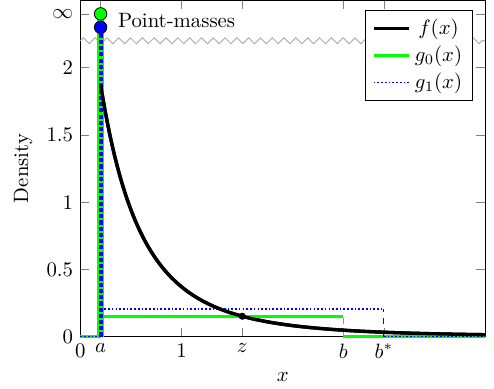}
    \caption{Proof sketch in part (i) of Theorem \ref{th:dec-uni}. 
    Here, $f$ and $g_0$ have the same area (probability) exceeding $z$, with $g_0$ having a smaller mean; $g_1$ has mean 1 and larger probability of exceeding $z$ than $g_0$.}
    \label{fig:pf-sketch-decreasing}
\end{figure}

We illustrate in Figure \ref{fig:pf-sketch-decreasing} how to construct $g_0$ from $f$.
Since $f$ has a decreasing density over $[a, \infty)$, we will have $f(x) \ge g_0(x)$ for $x \in [a, z)$ and $f(x) \le g_0(x)$ for $x > z$.
We can construct $g_0$ by shifting the area between $f(x)$ and $f(z)$, for $x \in (a, z)$ to a point mass at $a$ and shifting the area between $f(x)$ and $0$, for $x > b$ to the area between $f(x)$ and $f(z)$, for $x \in (z, b)$. 
Note that $b$ is chosen so that $\p(E_0 \in [z, b)) = \p(E \ge z)$, 
so $\p(E_0 \ge z) = \p(E \ge z)$. 
Further, since we construct $g_0$ by shifting the density of $f$ to the left, we have $\E[E_0] \le \E[E]$.  

Therefore, for any $E \in \mathcal{E}_{\rm D}$ with left end-point of support $a$, there exists an e-variable $E' \in \mathcal{E}_{\rm D}$ such that $E'$ has a point mass at $a$ and a uniform density on $[a, b]$ for some constant $b$, and $\p(E'\ge z) = \p(E \ge z)$. To show $\p(E \ge z) \leq 1/(2z)$, it suffices to look at the collection of e-variables that have a point mass at $a$ and a uniform density on $[a, b]$ for any $b > 1$.
Moreover, it suffices to consider the case $\E[E']=1$.

Note that $\E[E'] = (1 - w)a + w (a + b)/2$ for some $w\in [0, 1]$ and $b \geq 1$. The requirement $\E[E'] = 1$ implies that $w = 2(1 - a)/(b - a)$. Now, we seek $b$ that maximizes $\p(E'\ge z)$. 
We have 
$\p(E' \ge z) = w (b-z)/(b-a) = 2(1-a)(b-z)/(b-a)^2$. Taking the derivative with respect to $b$ and setting the result equal to zero, we find that $b^* := 2z - a$ for $z > 1$. Substituting this value of $b^*$ back into $\p(E'\ge z)$, we find that $\p(E'\ge z) = (1-a)/(2(z-a))$.  
We illustrate the density of $E'$ that maximizes $\p(E'\ge z)$, denoted by $g_1$, in Figure \ref{fig:pf-sketch-decreasing}.
Since $z > a$, the latter probability is a decreasing function of $a$, hence its supremum occurs for $a = 0$ and we have $\p(E\ge z) \le 1/(2z)$. 

It remains to show $ \p(E\ge 1/\gamma ) = \gamma/2$  for some $E\in \mathcal E_{\rm D}$. Take $E$ following a mixture distribution of a uniform distribution on $[0,2/\gamma]$ with weight $\gamma$ and a point mass at $0$. It is easy to see that $\p(E\ge 1/\gamma) =\gamma/2$ and $\E[E]=1$, showing the desired result. The last statement of part (i) is trivial by checking with $E=1$.  
\end{proof}

We obtain the following result by combining Theorems \ref{th:calibrator} and \ref{th:dec-uni}.

\begin{corollary}\label{coro:1}
If $E\in \mathcal E_{\rm D}$ then $ P:=(2E)^{-1} \id_{\{E> 1\}}  + \id_{\{E\le 1\}}$ is a p-variable.
  \end{corollary}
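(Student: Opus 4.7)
The plan is to recognize Corollary \ref{coro:1} as an immediate consequence of Theorem \ref{th:calibrator} applied to $\mathcal E = \mathcal E_{\rm D}$, combined with the explicit value of $R_\gamma(\mathcal E_{\rm D})$ supplied by Theorem \ref{th:dec-uni}(i). By Theorem \ref{th:calibrator}, the function $g(x):=R_{1/x}(\mathcal E_{\rm D})$ on $[0,\infty]$ is an e-to-p calibrator on $\mathcal E_{\rm D}$, so $g(E)$ is a p-variable for every $E\in\mathcal E_{\rm D}$; the task therefore reduces to writing $g$ explicitly and observing that $g(E)$ coincides with the random variable $P$ in the statement.

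First I would compute $g$ piecewise. For $x>1$ we have $1/x\in(0,1)$, and Theorem \ref{th:dec-uni}(i) gives $g(x)=R_{1/x}(\mathcal E_{\rm D})=1/(2x)$. For $x\in[0,1]$ we have $1/x\ge 1$, so $R_{1/x}(\mathcal E_{\rm D})$ is dominated by the trivial value $1$, consistent with $R_1(\mathcal E_{\rm D})=1$; setting $g(x)=1$ on $[0,1]$ preserves the calibrator property (monotonicity and the p-variable requirement). Thus $g(x)=(2x)^{-1}\id_{\{x>1\}}+\id_{\{x\le 1\}}$, and substituting $E$ for $x$ gives precisely $P$.

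As a self-contained alternative, one may check $\p(P\le\alpha)\le\alpha$ directly by cases. For $\alpha\in(0,1/2)$, the identity $P=1$ on $\{E\le 1\}$ forces $\{P\le\alpha\}=\{E\ge 1/(2\alpha)\}$, and Theorem \ref{th:dec-uni}(i) with $\gamma=2\alpha\in(0,1)$ yields $\p(E\ge 1/(2\alpha))\le\alpha$. For $\alpha\in[1/2,1)$ we have $\{P\le\alpha\}\subseteq\{E>1\}$, and letting $\gamma\uparrow 1$ in Theorem \ref{th:dec-uni}(i) gives $\p(E>1)\le 1/2\le\alpha$. The only delicate point in either route is the boundary behaviour near $E=1$ (equivalently $\alpha=1/2$), which is handled by the limiting argument; beyond this, there is no real obstacle, as all substantive work is already carried out in Theorems \ref{th:calibrator} and \ref{th:dec-uni}.
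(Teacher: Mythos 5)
Your proposal is correct and follows exactly the paper's route: the paper derives Corollary \ref{coro:1} by combining Theorem \ref{th:calibrator} (the calibrator $x\mapsto R_{1/x}(\mathcal E)$) with the explicit value of $R_\gamma(\mathcal E_{\rm D})$ from Theorem \ref{th:dec-uni}(i), which is precisely your first argument, and your identification $g(x)=(2x)^{-1}\id_{\{x>1\}}+\id_{\{x\le 1\}}$ together with the limiting treatment of the boundary $\gamma\uparrow 1$ is sound. The self-contained case check is a correct unpacking of the same content rather than a genuinely different proof.
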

  
 The p-variable $P$ in Corollary \ref{coro:1} is not a continuous function of $E$ at $E=1$. This is because the quantity $R_{\gamma}(\mathcal E_{\rm D})$ is not continuous at $\gamma=1$. Indeed, $R_1 (\mathcal E_{\rm D})=1$ and $R_\gamma (\mathcal E_{\rm D})<1/2$ for $\gamma <1$.

\begin{remark} \label{rem:dec}
Proposition 6.4 of \cite{W24}
gives that $2E^{-1}$ is a p-variable 
for any e-variable $E$ with a decreasing density on $[0,\infty)$, using the concept of p*-values.
Contrasted to Corollary \ref{coro:1}, the 
separate treatment of $E>1$ and $E\le 1$ is absent in that result. This is
because if the left end-point of the support of $E$ is $0$, then $\p(E> 1/x)\le x/2$ for all $x>0$; in part (i) of Theorem \ref{th:dec-uni} we only have this probability bound for $x>1$ due to our less restrictive assumption on the support of $E$.
\end{remark}

The main message from Theorem \ref{th:dec-uni} is that when an e-variable has a decreasing density, its threshold can be boosted by a factor of $2$. That is, $T_\alpha(\mathcal{E}_{\rm D}) = 1/(2\alpha)$. 
Further, $T_\alpha(\mathcal{E}_{\rm D>1}) = 1/(2\alpha) + \alpha/2 = T_\alpha(\mathcal{E}_{\rm D}) + \alpha/2$.
Therefore, for small values of $\alpha$, we can boost the threshold by a factor slightly less than 2 when $E \in \mathcal{E}_{\rm D>1}$. This validates our message that, for small $\alpha$, the distribution of uninformative e-values (between 0 and 1) has little impact on the rejection region; the shape of the distribution of large e-values has the most influence on the rejection threshold. 
In part (iii) of Theorem \ref{th:dec-uni}, 
$R_\gamma (\mathcal E_{\rm U}   ) = \gamma /2 $,
for $\gamma \in (0,2/3]$   and $R_\gamma (\mathcal E_{\rm U}   ) = 2\gamma -1$ for $\gamma \in (2/3,1]$.
Since $\gamma\le 2/3$ is the most practical situation (meaning a type-I error control of $1/3$), the main message from part (iii) is similar to that from part (i): when an e-variable has a unimodal density, its threshold can be boosted by a factor of $2$ in practice. 
We provide in Figure \ref{fig:threshold-comparison-1} a comparison of different worst-case type-I errors and improved thresholds for $\mathcal{E}_{\rm 0}, \mathcal{E}_{\rm D},  \mathcal{E}_{\rm D>1}$ and $\mathcal{E}_{\rm U}$.

In Appendix \ref{sup:moment-constraints}, we provide some other results related to this section that further incorporate variance constraints on the e-variables.

\begin{figure}[ht]
    \centering

    \includegraphics[width = \textwidth]{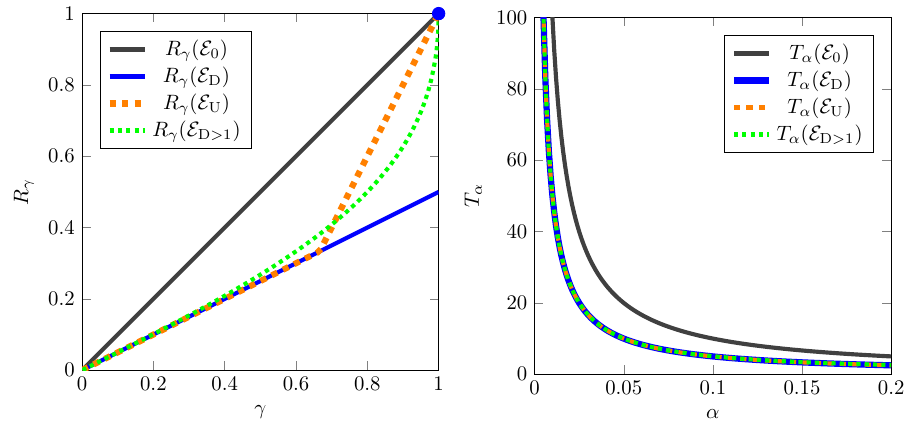}

\caption{Comparison of worst-case type-I errors and improved thresholds for decreasing and unimodal densities. }    \label{fig:threshold-comparison-1}
\end{figure}

\subsection{Distributions of log-transformed e-variables}

In many applications of e-values, the final e-variable $E$ is the product of many e-variables. In such a setting, assuming $\log E$ has some simple distributional properties may be reasonable. 
We say that a random variable $X$ has a symmetric distribution
if there exists $c\in \R$ such that $X$ and $c-X$ are identically distributed. 
We consider six different sets in this section:
\begin{align*}
\mathcal E_{\rm LS}& =\{E\in\mathcal E_0: \mbox{$\log E$ has a symmetric distribution}\},
 \\
 \mathcal E_{\rm LU}& =\{E\in\mathcal E_0: \mbox{$\log E$ has a unimodal density}\},
 \\ \mathcal E_{\rm LD>0}& =\{E\in\mathcal E_0: \mbox{$\log E$ has a decreasing density over $[0, \infty)$}\},
\\ \mathcal E_{\rm LD}& =\{E\in\mathcal E_0: \mbox{$\log E$ has a decreasing density}\},
\\
 \mathcal E_{\rm LUS}& =\{E\in\mathcal E_0: \mbox{$\log E$ has a unimodal and symmetric distribution}\},
 \\
\mathcal E_{\rm LN}& =\{E\in\mathcal E_0: \mbox{$E$  has a log-normal distribution}\}.
 \end{align*} 
 In all sets above, we require $\p(E=0)=0$, so that $\log E$ is a real-valued random variable. 
Note that $\mathcal E_{\rm LN}\subseteq \mathcal E_{\rm LUS} \subseteq \mathcal E_{\rm LS} $ and $\mathcal E_{\rm LD}\subseteq \mathcal E_{\rm LD>0}$.
The point-mass distributions at some $x\in (0,1]$ are included in all sets above, which are degenerate cases of log-normal distributions. 

The following theorem provides the worst-case type-I errors for the six sets of e-variables. The proof is given in Appendix \ref{app:proof-log-transformed}.

\begin{theorem} \label{th:log-transformed}
For $\gamma \in (0,1]$, 
\begin{enumerate}
\item [(i)]
     $R_\gamma (\mathcal E_{\rm LS}   ) =\gamma \wedge (1/2)$  if $\gamma\ne 1$ and $R_1(\mathcal E_{\rm LS}  ) =1$;
     \item[(ii)] $R_\gamma (\mathcal E_{\rm LU}   ) =\gamma$;
     
     \item[(iii)] $\mathcal{E}_{\rm LD>0} \subseteq \mathcal{E}_{\rm D>1}$ and 
$$ 
\frac{\gamma}{e - \gamma} \le 
R_\gamma(\mathcal{E}_{\rm LD>0}) = 
\frac{\gamma}{t_\gamma}  ,$$ 
where $t_\gamma$ is the unique solution of $t(1-\log t) = \gamma$ for  $t \in (1, e)$,
and we have 
$$
t\uparrow e \mbox{~~~and~~~}
\frac{ R_\gamma(\mathcal{E}_{\rm LD>0}) }
{\gamma } \downarrow \frac 1 e \mbox{~~~as $\gamma\downarrow 0$};
$$ 
     
     \item[(iv)] 
     $R_\gamma (\mathcal E_{\rm LD} ) =R_\gamma (\mathcal E_{\rm LUS}   ) $
 and     $$\frac{\gamma}e\le R_\gamma (\mathcal E_{\rm LD}   ) =R_\gamma (\mathcal E_{\rm LUS}   ) 
  \le \frac{\gamma}{e(1-\gamma^2)} \wedge \left(   
\frac{\gamma }{1+ \sqrt{1-\gamma^2}} \right);$$
     
     \item[(v)]
     if $\gamma\ne1$ then $$R_\gamma (\mathcal E_{\rm LN}   ) =\Phi\left(-\sqrt{-2\log\gamma}\right) \leq  \frac{\gamma}{2\sqrt{-\pi \log \gamma}},$$
     where $\Phi$ is the standard normal cdf, and $R_1(\mathcal E_{\rm LN}   )=1$.
     \end{enumerate}
\end{theorem}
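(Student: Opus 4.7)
The plan is to prove each part by a two-step template: derive an upper bound on $\p(E \geq 1/\gamma)$ from the constraint $\E[E] \leq 1$ together with the structural hypothesis on $\log E$ (using Markov, Jensen, or a constrained optimization), and then exhibit a worst-case construction matching it, often only in a limiting sense since the Markov-saturating two-point law places an atom at $0$, which is excluded by the standing assumption $\p(E=0)=0$.

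For part (i), Markov gives $R_\gamma \leq \gamma$. The bound $R_\gamma \leq 1/2$ follows by noting that if $Y = \log E$ is symmetric about $c$, then $\E[e^{Y-c}] = \E[\cosh(Y-c)] \geq 1$, forcing $e^c \leq \E[E] \leq 1$ and hence $c \leq 0$; for $\gamma < 1$ this gives $-\log\gamma > c$, and symmetry yields $\p(Y \geq -\log\gamma) \leq \p(Y > c) \leq 1/2$. Sharpness uses symmetric three-point laws $Y \in \{c-M,\, c,\, c+M\}$ with probabilities $(p, 1-2p, p)$: as $M \to \infty$ the mean constraint forces $c + M \approx -\log p$, so $\p(Y = c+M) = p$ approaches $\gamma$ when $\gamma \leq 1/2$ and $1/2$ when $\gamma \in (1/2, 1)$. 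For part (ii), Markov again gives the upper bound; for the matching lower bound I would invoke the paper's convention that a unimodal density may carry a point mass at its mode: let $Y$ have a point mass $p$ at the mode $-\log\gamma$ and a uniform density of height $c$ on $[-M, -\log\gamma]$, so that as $M \to \infty$ one obtains $p \to \gamma$ and $c \to 0$, whence $\p(Y \geq -\log\gamma) = p \to \gamma$.

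Part (iii) is the computational heart. The inclusion $\mathcal{E}_{\rm LD>0} \subseteq \mathcal{E}_{\rm D>1}$ follows from $f_E(y) = f_{\log E}(\log y)/y$, whose two factors are both decreasing in $y$ for $y \geq 1$. For the formula, I would push residual negative mass to an atom at $-M \to -\infty$ and invoke Khintchine's theorem on the conditional law of $\log E$ given $\log E \geq 0$ to write it as $UW$ with $U$ uniform on $[0,1]$ and $W \geq 0$ independent. The problem then reduces to maximizing $\E[\max(1 - (-\log\gamma)/W, 0)]$ subject to $\E[(e^W - 1)/W] \leq 1$, and by linearity in the law of $W$ the optimum is attained at a Dirac $W = w^*$ with $w^* + e^{-w^*} = 1 - \log\gamma$. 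Setting $t = e^{-w^*}$ gives $\gamma = t e^{1-t}$, and substituting $s = e^{1-t} \in (1, e)$ yields the stated equation $s(1-\log s) = \gamma$ with $R_\gamma = t = \gamma/s$; the asymptotic $s \uparrow e$ as $\gamma \downarrow 0$ delivers the $1/e$ limit. Part (v) is a direct computation: parametrize $\log E \sim \mathrm{N}(-\sigma^2/2, \sigma^2)$ from the mean constraint, compute $\p(E \geq 1/\gamma) = \Phi(\log\gamma/\sigma - \sigma/2)$, optimize at $\sigma^* = \sqrt{-2\log\gamma}$ to obtain $\Phi(-\sqrt{-2\log\gamma})$, and apply the Mill's ratio $\Phi(-s) \leq \phi(s)/s$ for the stated $\gamma/(2\sqrt{-\pi\log\gamma})$ bound.

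For part (iv), the upper bound $\gamma/(1+\sqrt{1-\gamma^2})$ is immediate from $\mathcal{E}_{\rm LD} \subseteq \mathcal{E}_{\rm LD>0} \subseteq \mathcal{E}_{\rm D>1}$ and Theorem~\ref{th:dec-uni}(ii); the bound $\gamma/(e(1-\gamma^2))$ should follow from a variant of the Khintchine optimization in~(iii) in which the decreasing hypothesis is imposed on the whole support, so that the centering $a$ enters both the constraint $e^a \E[(e^W - 1)/W] \leq 1$ and the exceedance threshold $W \geq -\log\gamma - a$, yielding a modified extremal equation. The lower bound $\gamma/e$ is realized by taking $\log E$ uniform on $[-M, -\log\gamma + 1]$ as $M \to \infty$. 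The identity $R_\gamma(\mathcal{E}_{\rm LD}) = R_\gamma(\mathcal{E}_{\rm LUS})$ is the principal obstacle I anticipate: a naive sign-randomization of an LD distribution halves the exceedance probability rather than preserving it, so one must build a more careful bijection between the two Khintchine representations (with the uniform factor drawn from $[0,1]$ for LD and from $[-1,1]$ for LUS), compensating for the factor of $1/2$ by a shift of centering and a rescaling of $W$ that preserves both the e-variable constraint and the threshold probability.
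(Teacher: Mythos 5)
Parts (i), (ii) and (v) are correct and essentially identical to the paper's proof (three-point symmetric laws plus Jensen/symmetry for (i); a mode-with-long-left-tail construction for (ii); optimizing $\Phi(\log\gamma/\sigma-\sigma/2)$ over $\sigma$ and Mills' ratio for (v)). In part (iii) your Khintchine/linear-programming reformulation is a legitimately different derivation, but it lands on exactly the same extremal problem as the paper, namely maximizing $(a-\log z)/(e^a-1)$ over $a$ after reducing to ``atom near $-\infty$ plus constant density on $[0,a]$''; your change of variables correctly recovers $t(1-\log t)=\gamma$ and $R_\gamma=\gamma/t_\gamma$. Two small omissions there: you never verify the stated lower bound $\gamma/(e-\gamma)$ (it follows by evaluating the supremand at $a=1-\log\gamma$ rather than at the optimum) and you address only the limit $1/e$, not the claimed monotonicity of $R_\gamma(\mathcal{E}_{\rm LD>0})/\gamma$.

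The genuine gap is part (iv). You explicitly leave the identity $R_\gamma(\mathcal E_{\rm LD})=R_\gamma(\mathcal E_{\rm LUS})$ and the upper bound $\gamma/(e(1-\gamma^2))$ as anticipated obstacles, offering only a vague ``bijection between Khintchine representations,'' which is not carried out and is not the right mechanism: no map from one class to the other is needed. The paper instead shows that \emph{both} classes reduce to the same extremal family lying in their intersection. For $E\in\mathcal E_{\rm LUS}$ one replaces the law of $\log E$ by a point mass at the median plus a symmetric uniform density matched to $f$ and $F$ at the threshold, then flattens the point mass; for $E\in\mathcal E_{\rm LD}$ one matches a uniform $F_{m,c}$ at the threshold directly. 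In each case the replacement is dominated in increasing convex order, so $\E[e^Y]\le\E[e^X]\le1$ and the exceedance probability is preserved; hence both suprema are attained over uniform distributions on intervals $[m-1/c,m]$, and the equality is immediate. The $\gamma/(e(1-\gamma^2))$ bound then also needs an ingredient you do not supply: for any such uniform e-variable, Jensen forces the mean (hence the midpoint, hence the median) of $\log E$ to be $\le 0$, i.e.\ $1/c\ge 2m\ge 2x$, which lets one bound $1-e^{-1/c}$ from below by $1-e^{-2x}=1-\gamma^2$ and take the supremum over $m$ explicitly. Without that median constraint your ``modified extremal equation'' does not produce the stated closed form. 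As written, part (iv) is not a proof.
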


The results in parts (i) and (ii) of Theorem \ref{th:log-transformed} show that the assumptions of log-transformed symmetric or log-transformed unimodal distributions do not improve the bound from Markov's inequality for the most useful cases where $\gamma \le 1/2$. 

The class $\mathcal E_{\rm LD>0}$ offers a non-trivial threshold improvement by roughly a factor of $e$. As with $\mathcal E _{\rm D>1}$, most evidence against the null come from observing e-values larger than $1$, which is equivalent to observing $\log$ e-values larger than zero. For $\gamma$ small, such as $0.01$ or $0.05$, the lower and upper bounds in part (iv) of Theorem \ref{th:log-transformed} are very close; hence, using the upper bound is not wasteful. Part (v) shows that log-normality improves over $R_\gamma(\mathcal E_0)$ by at least $3.54\sqrt{-\log \gamma}$. 

We summarize the bounds on worst-case type-I errors and improved thresholds from Theorem \ref{th:log-transformed} in Figure \ref{fig:threshold-comparison-3}.

\begin{figure}[ht]
    \centering

    \includegraphics[width = \textwidth]{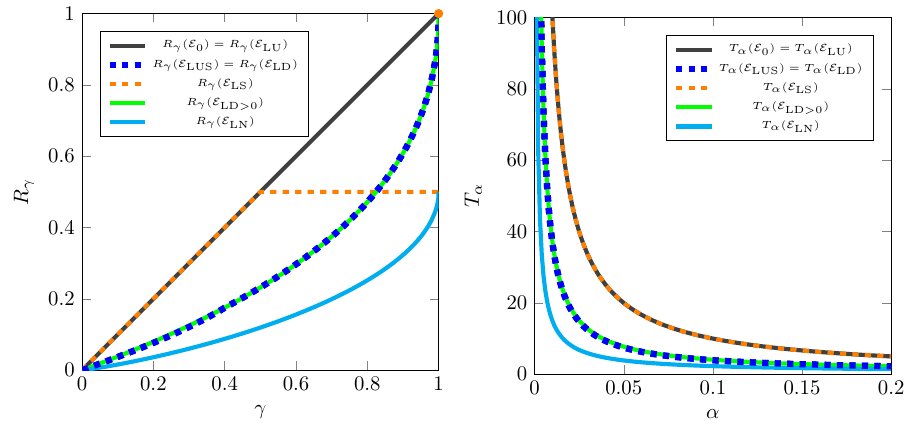}

    \caption{Comparison of worst-case type-I errors and improved thresholds for log-transformed e-variables. Results for $\mathcal{E}_{\rm LUS}$ are conservative bounds since we only find an upper bound for $R_\gamma(\mathcal{E}_{\rm LUS})$ and $T_\alpha(\mathcal{E}_{\rm LUS})$.}
    \label{fig:threshold-comparison-3}
\end{figure}

\subsection{Log-concave density, survival or distribution functions}

In this section, we study distributional assumptions of log-concavity, a popular property for density functions, survival functions, and distribution functions. 
We will consider the sets 
\begin{align*}
\mathcal E_{\rm LCD}&=\{E\in\mathcal E_0: \mbox{$E$  has a  log-concave   density    on $\R$}\},
 \\
\mathcal E_{\rm LCS}&=\{E\in\mathcal E_0: \mbox{$E$  has a log-concave    survival function on $\R$}\},
\\ 
\mathcal E_{\rm LCF}&=\{E\in\mathcal E_0: \mbox{$E$  has a log-concave    distribution function on $\R$}\}.
 \end{align*} 

The density of $E\in\mathcal E_{\rm LCD} $ may take the value of $0$ on some intervals like $(-\infty,a)$ or $(a,\infty)$ for $a\ge 0$, or both, that is, the log-density function is $-\infty$ on such intervals.
The assumption of log-concave density is satisfied by, for instance, normal, uniform, Laplace and gamma (with shape parameter $\ge 1$) distributions, as well as their location-scale transforms. When considering e-variables, the support of these distributions should be confined to $[0,\infty)$.

All distributions with log-concave densities have log-concave survival functions and log-concave distribution functions, thus $\mathcal E_{\rm LCD} \subseteq \mathcal E_{\rm LCS}$ and $\mathcal E_{\rm LCD} \subseteq \mathcal E_{\rm LCF}$.
Moreover, $\mathcal E_{\rm LCD} \subseteq \mathcal E_{\rm U}$ and $\mathcal E_{\rm D} \subseteq \mathcal E_{\rm LCF}$.  
The property of having a log-concave survival function is also known as having an increasing hazard rate or increasing failure rate. 
Log-normal and Pareto distributions belong to the class of distributions with log-concave cumulative distribution functions; however, they do not have log-concave densities or survival functions. 
For many examples of distributions belonging to the above classes, see, e.g., \cite{bagnoli2005logconcave}. In \cite{shah2013variable}, the authors derive probability bounds for r-concave distributions in the context of stability selection. R-concave distributions can be seen as an interpolation between log-concavity and unimodality, but the results of \cite{shah2013variable} are formulated on discrete grids and not compatible with the setting in our paper.  

\begin{theorem}\label{thm:log-distribution}
	For $\gamma\in (0,1],$ we have
	\begin{itemize}
		\item[(i)] $e^{-1/\gamma} \le R_{\gamma}(\mathcal E_{\rm LCS} )
= e^{s_\gamma/\gamma}
\le  e^{1-1/ \gamma  },$ where $s_1=0$ and for $\gamma<1$, $s_\gamma$ is the unique solution $s\in (-1,0)$ to the equation 
$ e^{s/\gamma} =s +1$;
		\item[(ii)] $\mathcal{E}_{\rm LCD} \subseteq \mathcal{E}_{\rm LCS}$ and $\mathcal{E}_{\rm LCD} \subseteq \mathcal{E}_{\rm U}$ and thus
		$$e^{-1/ \gamma  } \le R_\gamma(\mathcal{E}_{\rm LCD}) \leq R_\gamma(\mathcal{E}_{\rm U}) \wedge R_\gamma(\mathcal{E}_{\rm LCS})  \le  e^{1-1/ \gamma  };$$
		\item[(iii)] $R_{\gamma}(\mathcal E_{\rm LCF} )= \gamma$.
	\end{itemize}
\end{theorem}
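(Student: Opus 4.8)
The plan is to establish the two inequalities separately. The upper bound $R_\gamma(\mathcal E_{\rm LCF})\le\gamma$ requires no new work: since every e-variable satisfies Markov's inequality we have $\mathcal E_{\rm LCF}\subseteq\mathcal E_0$, and hence $R_\gamma(\mathcal E_{\rm LCF})\le R_\gamma(\mathcal E_0)=\gamma$ for $\gamma\in(0,1]$. Thus the entire content of part (iii) is the reverse inequality $R_\gamma(\mathcal E_{\rm LCF})\ge\gamma$, namely that imposing log-concavity of the distribution function does not lower the worst-case type-I error at all. To prove this I would exhibit a family $(E_n)_{n\ge1}$ of e-variables, each with a log-concave distribution function, such that $\p(E_n\ge 1/\gamma)\to\gamma$.

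The guiding idea is that the Markov bound $\gamma$ is attained only by the two-point law placing mass $1-\gamma$ at $0$ and mass $\gamma$ at $1/\gamma$, so the task reduces to approximating this extremal law from within $\mathcal E_{\rm LCF}$. First I would record which features are compatible with a log-concave $F$: since $\log F$ must be concave on $\R$, an atom (an upward jump of $\log F$) is admissible only at the left endpoint of the support, where $\log F$ rises from $-\infty$, but no interior atom is allowed. This is the structural asymmetry distinguishing $\mathcal E_{\rm LCF}$ from $\mathcal E_{\rm LCS}$ and $\mathcal E_{\rm LCD}$, in which an atom destroys concavity of the survival function or density and forces a strictly smaller bound. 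I would therefore place an atom of size close to $1-\gamma$ at $0$ and spread the remaining mass so that $\log F$ stays concave, governed by a parameter $n$, arranging that (a) $\E[E_n]\le 1$, so $E_n$ is a genuine e-variable, and (b) $\p(E_n\ge 1/\gamma)\to\gamma$. Natural candidate shapes are those whose $\log F$ is manifestly concave — e.g. $\log F$ piecewise affine, or $F$ a log of an affine function — because then log-concavity is checked directly and both the mean and the exceedance probability are available in closed form, so the limit can be computed explicitly.

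The main obstacle is precisely reconciling log-concavity of $F$ with the mass concentration demanded by Markov-tightness. Equality in Markov wants the upper mass $\gamma$ sitting essentially at the single point $1/\gamma$, but that would create an interior upward jump of $\log F$, which is forbidden; so the approximating laws must let $F$ rise continuously past $1/\gamma$, and the delicate point is to show that one can still push $\p(E_n\ge 1/\gamma)$ all the way to $\gamma$ while holding the mean at $1$ and keeping $\log F$ concave for every finite $n$. I expect the crux to be the explicit choice of the family together with the simultaneous verification of these three requirements along the limit; once a workable family is pinned down, evaluating its mean and tail probability and letting $n\to\infty$ should be routine. Combining this lower bound with the trivial upper bound then gives $R_\gamma(\mathcal E_{\rm LCF})=\gamma$.
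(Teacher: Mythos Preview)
Your proposal addresses only part (iii); for that part it is correct and follows essentially the same route as the paper, which realises the lower bound you describe via the explicit family $F(x)=(1-b)+b\,e^{-\lambda(m-x)_+}$ on $x\ge 0$ (point mass at $0$, choose $b$ so that the mean equals $1$, send $\lambda\to\infty$ to concentrate the continuous mass near $m$, then let $m\downarrow 1/\gamma$). Parts (i) and (ii) are not treated in your proposal.
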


The proof of Theorem \ref{thm:log-distribution} is given in Appendix \ref{app:proof-log-distribution}.

Part (i), dealing with log-concave survival functions, gives the most useful result since it is easy to verify visually and provides a useful bound. The upper bound in part (i) of Theorem \ref{thm:log-distribution} is convenient for applications due to its analytical form. Numerically, we find that $R_{\gamma}(\mathcal{E}_{\rm LCS})$ is closer to the lower bound of $e^{-1/\gamma}$, but using the upper bound is not so wasteful since it is at most the constant $e$ too large, while the bound grows exponentially. In part (ii), we could not obtain the distribution that attains the lower bound, implying that there may be room for improvement. For the class of log-concave distributions, we obtain a negative result: There is generally no better bound on $R_{\gamma}(\mathcal E_{\rm LCF} )$ other than the Markov bound.
 
We summarize the worst-case type-I error bounds and the improved thresholds in Figure \ref{fig:threshold-comparison-2}. Recall that the bounds for $\mathcal{E}_{\rm LCD}$ do not hold with equality, so there may be better bounds available, although we have not identified them. 
\begin{figure}[ht]
    \centering

    \includegraphics[width = \textwidth]{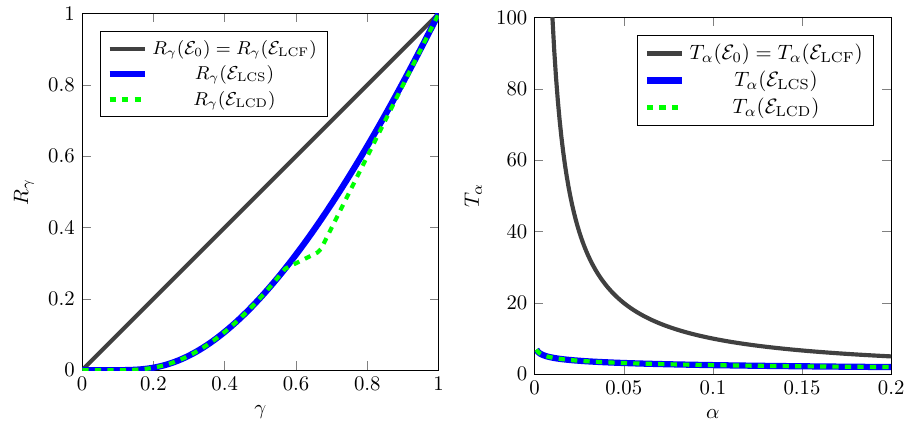}

\caption{Comparison of worst-case type-I errors and improved thresholds for log-concave density, survival and distribution functions. Results for $\mathcal{E}_{\rm LCD}$ are conservative bounds since we only find an upper bound for $R_\gamma(\mathcal{E}_{\rm LCD})$ and $T_\alpha(\mathcal{E}_{\rm LCD})$.}   \label{fig:threshold-comparison-2}
\end{figure}

\subsection{Summary}

For a class of e-variables $\mathcal{E}$, Lemma \ref{lem:1} allows one to design a more aggressive e-test based on the rejection threshold $T_{\alpha}(\mathcal{E})$. The rejection region is enlarged, leading to rejections with less evidence while maintaining the type-I error controlled at $\alpha$ for $\alpha \in (0, 1)$. We have already provided plots of the worst-case type-I errors and improved rejection thresholds in Figures \ref{fig:threshold-comparison-1},  \ref{fig:threshold-comparison-3}, and \ref{fig:threshold-comparison-2}. We summarize the results on the improved thresholds in Table \ref{tab:improved-threshold}.

\begin{table}[ht]
	\centering
	\caption{Improved thresholds $T_{\alpha}(\mathcal{E})$ for different distributional hypotheses.}\label{tab:improved-threshold}
	\begin{tabular}{rrrrrrr}
                                                           &         \multicolumn{6}{c}{$\alpha$}         \\
                                                           &  0.001 &  0.01 &  0.02 &  0.05 &  0.1 &  0.2 \\ \cmidrule(lr){2-7}
		       $\mathcal{E}_{0}, \mathcal{E}_{\rm LS}$  &   1000 &   100 &    50 &    20 &   10 &    5 \\
		    $\mathcal{E}_{\rm D}, \mathcal{E}_{\rm U}$ &    500 &    50 &    25 &    10 &    5 &  2.5 \\
		                       $\mathcal{E}_{\rm D>1}$  &    500 & 50.01 & 25.01 & 10.03 & 5.05 & 2.60 \\
		$\mathcal{E}_{\rm LCD}, \mathcal{E}_{\rm LCS}$ &   6.91 &  4.65 &     4 &  3.15 & 2.56 &    2 \\
		 $\mathcal{E}_{\rm LUS}, \mathcal{E}_{\rm LD}$    & 367.88 & 36.82 & 18.45 &  7.49 & 3.93 & 2.25 \\
		                      $\mathcal{E}_{\rm LD>0}$   & 368.25 & 37.16 & 18.77 &  7.73 & 4.07 & 2.25 \\
		                        $\mathcal{E}_{\rm LN}$ &    118 & 14.97 &  8.24 &  3.87 & 2.27 & 1.42
	\end{tabular}
\end{table}

 There are some ad-hoc thresholds of e-values used in the literature, as it is clear that the standard calibrator $\min(1/p, 1)$ is too restrictive.
For instance,  
\cite{S21} recommended using the conversion formula $1/\sqrt{p} - 1$. 
\cite{VW21} suggested using the rule of thumb proposed in 
Appendix B of \cite{jeffreys1961theory} to convert between p-values and Bayes factors,
as Bayes factors align with e-values in the context of simple hypotheses. Both Shafer's conversion and Jeffreys' rule of thumb are slightly less conservative than the class $\mathcal{E}_{\rm LN}$ but more conservative than $\mathcal{E}_{\rm LCS}$. 
In contrast to these ad-hoc choices, the thresholds obtained in this paper have a provable type-I error guarantee under the respective assumptions.


\section{Perturbation bounds for improved thresholds}\label{sec:perturbation}

Suppose that we have a class of e-variables $\mathcal{E}$ along with its worst-case probability bound $R_{\gamma}(\mathcal{E})$. We will study the worst-case probability bound for a perturbed version of $\mathcal{E}$ and study how these bounds depend on the chosen perturbation. We equip the space of probability distributions with the L\'evy-Prokhorov metric, which metrizes weak convergence: for two probability measures $\mu$ and $\mu'$ on $\R$, we define
$$d_{\mathrm{LP}}(\mu, \mu') = \inf \left\{\delta > 0:
\begin{array}{l}
    \mu(A) \leq \mu'(A^\delta) + \delta \\
    \mu'(A) \leq \mu(A^\delta) + \delta 
\end{array}
~\text{  for all Borel } A\subseteq \R\right\},$$
where 
$$A^\delta = \{x \in \mathbb{R}: |x-y| < \delta \mbox{ for some $y\in A$}\} \mbox{ for $\delta>0$}.$$
Define the $\delta$-neighbourhood of $\mathcal{E}$  for $\delta>0$ as 
$$\mathcal{E}^\delta = \{E' :  d_{\mathrm{LP}}(\mu_E, \mu_{E'}) < \delta \mbox{ for some $E\in \mathcal E$}\},$$
where $\mu_E$  is  the distribution of $E$ for any random variable $E$.
Denote by 
$$R_\gamma(\mathcal{E}^\delta) = \sup_{E' \in \mathcal{E}^{\delta}} \p(E' \geq 1/\gamma),$$
the worst-case probability bound for random variables in $\mathcal{E}^\delta$. Note that $\mathcal{E}^\delta$ need not only contain e-variables, hence the notion of worst-case probability bound is more general than in the previous sections. 

Our motivation in studying $R_{\gamma}(\mathcal{E}^\delta)$ is robustness and transfer via weak convergence. By replacing $\mathcal{E}$ with its $\delta$-neighbourhood, we get thresholds that tolerate small model misspecifications (for instance, if an approximate e-variable $E'$ is approximately unimodal, we can still use $R_\gamma(\mathcal{E}_U)$ to derive thresholds).
Worst-case probability bounds for a sequence $(E_n)_{n\in \N}$ carry over to its limit $E$ in distribution, as for asymptotic e-values (see \cite{IWR25}). 

\begin{proposition}
    For any $\gamma \in (0, 1]$ and $0 < \delta < 1/\gamma$, we have
    \begin{equation} 
    \label{eq:R1-p2} R_\gamma(\mathcal{E}^\delta) \leq R_{\frac{\gamma}{1 - \gamma \delta}}(\mathcal{E}) + \delta.
    \end{equation}
\end{proposition}

\begin{proof}
Fix some $E' \in \mathcal{E}^\delta$. By the definition of $\mathcal{E}^\delta$, there exists   $E \in \mathcal{E}$ such that $d_{\mathrm{LP}}(\mu_E, \mu_{E'}) < \delta$. Consider the closed set $A = [1/\gamma, \infty)$. Then, $A^\delta = [1/\gamma - \delta, \infty)$. From the definition of $d_{\mathrm{LP}}$, we have
$$\p(E' > 1/\gamma) = \mu_{E'}(A) \leq \mu_{E}(A^\delta) + \delta = \p(E > 1/\gamma - \delta) + \delta.
$$
Taking the supremum over all $E \in \mathcal{E}$ and then over all $E' \in \mathcal{E}^\delta$, we obtain \eqref{eq:R1-p2}.
\end{proof}

\section{Supremum of comonotonic e-values}\label{sec:como}

In this section, we will introduce a testing strategy for situations where we have access to a family of comonotonic e-variables. This strategy can be used in conjunction with the improved thresholds. 

\subsection{Threshold for  comonotonic e-values}

In what follows, each $Q_\theta$ is a probability measure. To test $\{Q_{\theta_0}\}$ against $\{Q_{\theta}\}$, we usually use the e-variable as a likelihood ratio, that is, 
$$
E_\theta = \frac{\d Q_\theta}{\d Q_{\theta_0}}.
$$
If we test   $\{Q_{\theta_0}\}$ against $\{Q_\theta:\theta \in \Theta_1 \}$ where $\Theta_1\subseteq \Theta$, we can use a mixture 
$$
E_\nu = \int_{\Theta_1} \frac{\d Q_\theta}{ \d Q_{\theta_0}}  \nu(\d \theta),
$$
where $\nu$ is a prior probability on $\Theta_1$. 

For instance, as in \cite{VW21}, the simplest example is to test $\mathrm{N}(0,1)$ against $\mathrm{N}(\mu,1)$ with $\mu\ne 0$ for iid observations $X_1,\dots,X_n$. 
The e-variable $E_\mu$  for $\mathrm{N}(0,1)$ is   the likelihood ratio 
$$
E_\mu =\exp( {\mu S_n -n\mu^2/2}),
$$ 
where $S_n=\sum_{i=1}^n X_i$. 
Suppose that the alternative hypothesis is $\mu>0$. In this case, an interesting observation is that $E_\mu$, $\mu>0$ are \emph{comonotonic} random variables.  
Recall that random variables $E_{\theta}$, $\theta\in \Theta$
are comonotonic if there exists a common random variable $Z$ such that $E_\theta$ is an increasing function of $Z$ for each $\theta\in \Theta$. 

\begin{lemma}\label{lemma:comonotonic-survival}
	Let $x\in \R$ and $\gamma \in [0,1]$. 
	Suppose that $\X$ is a collection of comonotonic random variables
	satisfying $\p(X \ge  x)\le \gamma $ for each $X\in \X$.
	Then $\p(\sup_{X\in \X} X \ge x )\le \gamma$.
\end{lemma}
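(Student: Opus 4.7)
My approach is to exploit the defining property of comonotonicity to reduce the event $\{\sup_{X\in\mathcal{X}}X\ge x\}$ to an upper tail event of a single underlying random variable $Z$, and then handle it by a continuity-of-measure argument on a nested subfamily.

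First, by comonotonicity, there exists a random variable $Z$ and a family of increasing functions $\{f_X\}_{X\in\mathcal{X}}$ with $X=f_X(Z)$ a.s.\ for each $X\in\mathcal{X}$. Because $f_X$ is increasing, the set $U_X := \{z\in\R: f_X(z)\ge x\}$ is an upper ray in $\R$, i.e.\ of the form $[t_X,\infty)$ or $(t_X,\infty)$, with $t_X:=\inf U_X$ (allowing $\pm\infty$). Thus $\{X\ge x\}=\{Z\in U_X\}$ a.s., and the hypothesis reads $\p(Z\in U_X)\le \gamma$ for every $X\in\mathcal{X}$. Moreover, any union of upper rays is again an upper ray, so $U:=\bigcup_{X\in\mathcal{X}}U_X$ is an upper ray and $\{\sup_{X\in\mathcal{X}}X\ge x\} \subseteq \{Z\in U\}$; up to measurability issues, equality holds.

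Second, I would exhibit a countable nested subfamily that exhausts $U$. Let $T:=\inf_{X\in\mathcal{X}} t_X$. If $T$ is attained by some $X_0$ with $U_{X_0}=[T,\infty)$, then $U=U_{X_0}$ and $\p(Z\in U)=\p(X_0\ge x)\le\gamma$ directly. Otherwise pick $X_1,X_2,\dots\in\mathcal{X}$ with $t_{X_n}\downarrow T$ strictly; then $U_{X_1}\subseteq U_{X_2}\subseteq \dots$ and $\bigcup_n U_{X_n}=(T,\infty)$, which coincides with $U$ (the only way $U$ could equal $[T,\infty)$ is for some $X$ to attain $T$ with $U_X=[T,\infty)$, the case already handled). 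By continuity of probability from below,
$$\p(Z\in U)=\lim_{n\to\infty}\p(Z\in U_{X_n})=\lim_{n\to\infty}\p(X_n\ge x)\le \gamma.$$
Combining with the inclusion above gives $\p(\sup_{X\in\mathcal{X}}X\ge x)\le \gamma$.

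The main obstacle is not the probabilistic estimate itself, which follows cleanly from monotone convergence once the nested subfamily is in hand, but rather the bookkeeping around the open/closed endpoint of the upper ray $U$ and the related question of measurability of $\sup_{X\in\mathcal{X}} X$ when $\mathcal{X}$ is uncountable. The extraction of a countable cofinal sequence $\{X_n\}$ with $t_{X_n}\downarrow T$ essentially resolves both: it gives the event $\bigcup_n\{X_n\ge x\}$ as a genuine measurable upper bound for $\{\sup_{X\in\mathcal{X}}X\ge x\}$, and its probability is the desired limit.
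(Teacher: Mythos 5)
Your reduction to a single underlying variable $Z$ is sound in outline, and the countable-cofinal-subfamily step does correctly establish $\p\bigl(Z\in U\bigr)=\p\bigl(\bigcup_{X\in\X}\{X\ge x\}\bigr)\le\gamma$ by continuity from below. The genuine gap is the inclusion you invoke at the very end: you claim $\{\sup_{X\in\X}X\ge x\}\subseteq\{Z\in U\}$, but the containment runs the other way. A point $z$ with $f_X(z)<x$ for every $X$ yet $\sup_{X}f_X(z)=x$ belongs to $\{\sup_{X\in\X}X\ge x\}$ but to no $U_X$, and this discrepancy can carry full probability: take $X_n\equiv x-1/n$, a comonotonic family of constants with $\p(X_n\ge x)=0$ for every $n$, while $\sup_n X_n=x$ almost surely, so $\p(\sup_n X_n\ge x)=1$. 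This shows not merely that your inclusion fails but that the conclusion with ``$\ge$'' cannot be derived from the stated hypotheses at all; what your argument actually delivers is the strict-inequality version $\p(\sup_{X\in\X}X>x)\le\gamma$, via the correct inclusion $\{\sup_{X\in\X}X>x\}=\bigcup_{X\in\X}\{X>x\}\subseteq\bigcup_{X\in\X}\{X\ge x\}=\{Z\in U\}$. Notice that the final line of the paper's own proof likewise concludes only with $\p(\sup_{X\in\X}X>x)\le\gamma$.

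For comparison, the paper argues through quantile functions: the hypothesis $\p(X\ge x)\le\gamma$ forces the right $(1-\gamma)$-quantile of each $X$ to be at most $x$, comonotonicity makes the quantile of the supremum equal the supremum of the quantiles, and inverting the quantile relation yields the tail bound for $\sup_{X\in\X}X$. That is essentially a repackaging of your ray argument, and it stumbles on the same open-versus-closed endpoint issue at the last step. If you replace your final inclusion by the one displayed above and state the conclusion as $\p(\sup_{X\in\X}X>x)\le\gamma$, your proof becomes correct and self-contained, and it is arguably cleaner than the quantile route because it makes the endpoint subtlety explicit rather than hiding it in a quantile equivalence.
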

\begin{proof}
	Let $q_X$ be the right quantile function of $X$ for each $X$, that is,
	$$
	q_X (\alpha) = \inf\{ x\in \R: \p(X \le x)  >  \alpha\},~~~\alpha \in (0,1).
	$$
	Note that $\p(X \ge x)\le \gamma $ is equivalent to 
	$q_X (1-\gamma)\le x$ (e.g., Lemma 1 of \cite{GJW23}).
	This gives  
	$\sup_{X \in \X} q_X (1-\gamma)\le x$.
	Note that comonotonicity implies that 
	$\sup_{X \in \X} q_X (1-\gamma)$ is the right quantile function of $\sup_{X\in \X} X$.
	Using the above relation again we get  $\p(\sup_{X\in \X} X >x )\le \gamma$.
\end{proof}

The next result follows directly.
\begin{proposition} \label{prop:sup-comonotonic}
	Suppose that $
\mathcal E=\{E_{\theta}: \theta\in \Theta \}\subseteq \mathcal E_0$ is a collection of comonotonic e-variables for  a hypothesis $  H$. 
	Then $\sup_{Q\in  H} Q(\sup_{\theta\in \Theta} E_\theta \ge T_\alpha(\mathcal E))\le \alpha$.
\end{proposition}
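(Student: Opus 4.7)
The plan is to deduce the proposition as a short, essentially one-line consequence of Lemma \ref{lemma:comonotonic-survival}, once the family $\mathcal{E}=\{E_\theta:\theta\in\Theta\}$ is fed into its hypotheses. Informally, comonotonicity says the supremum of the $E_\theta$ behaves exactly like any single $E_\theta$ at the level of tail probabilities, while Lemma \ref{lem:1} says that $T_\alpha(\mathcal{E})$ is precisely the threshold that simultaneously controls all marginal tails at level $\alpha$. Putting these two together gives the proposition.

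More concretely, fix any $Q\in H$. The first step is to verify the hypothesis of Lemma \ref{lemma:comonotonic-survival}, namely that $Q(E_\theta \ge T_\alpha(\mathcal{E}))\le \alpha$ for every $\theta\in\Theta$. This follows from Lemma \ref{lem:1}: since $T_\alpha(\mathcal{E}) = (\sup_{E\in\mathcal{E}} q_\alpha(E))\vee 1 \ge q_\alpha(E_\theta)$, the definition of the left $(1-\alpha)$-quantile yields $Q(E_\theta > T_\alpha(\mathcal{E}))\le \alpha$. To upgrade from $>$ to $\ge$, I would either invoke continuity of $\gamma \mapsto R_\gamma(\mathcal{E})$ when available, or, more robustly, re-examine the proof of Lemma \ref{lemma:comonotonic-survival}, which is phrased via right quantiles and in fact accommodates the $\ge$ version directly (the inequality $q_X(1-\gamma)\le x$ used there is equivalent to $\p(X\ge x)\le\gamma$ for the right quantile). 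So a cleaner route is to apply the lemma with $x = T_\alpha(\mathcal{E})$ and $\gamma = \alpha$ using the right-quantile characterization.

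Step two is then immediate: since $\{E_\theta:\theta\in\Theta\}$ is a comonotonic collection whose elements all satisfy $Q(E_\theta \ge T_\alpha(\mathcal{E})) \le \alpha$, Lemma \ref{lemma:comonotonic-survival} gives
\[
Q\!\left(\sup_{\theta\in\Theta} E_\theta \ge T_\alpha(\mathcal{E})\right) \le \alpha.
\]
Since $Q\in H$ was arbitrary, taking the supremum over $Q\in H$ delivers the claim.

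The main (only) obstacle is the $\ge$ vs.\ $>$ technicality at the threshold $T_\alpha(\mathcal{E})$, because the definition of $T_\alpha$ is through an infimum and need not be attained. This is precisely the same issue flagged in the remark after Lemma \ref{lem:1}. I would handle it by relying on the right-quantile formulation in the proof of Lemma \ref{lemma:comonotonic-survival}, which transports the $\ge$ bound from each marginal to the supremum without needing continuity of $\gamma\mapsto R_\gamma(\mathcal{E})$; no further work is required.
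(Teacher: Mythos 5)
Your proof is correct and follows the route the paper intends: the paper gives no written proof beyond saying the result ``follows directly'' from Lemma~\ref{lemma:comonotonic-survival}, and your argument---establish $Q(E_\theta\ge T_\alpha(\mathcal E))\le\alpha$ for each $\theta$ via Lemma~\ref{lem:1}, transfer the bound to $\sup_{\theta}E_\theta$ by comonotonicity, then take the supremum over $Q\in H$---is exactly the omitted derivation. Your attention to the $\ge$ versus $>$ issue at the threshold is more care than the paper itself takes; strictly speaking the marginal bound $Q(E_\theta\ge T_\alpha(\mathcal E))\le\alpha$ rests on the continuity hypothesis in the last part of Lemma~\ref{lem:1} (or an equivalent assumption), which holds for the classes $\mathcal E$ to which the paper applies this result.
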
 

Consequently, we can obtain a valid test using the supremum of e-variables instead of their mixture under comonotonicity.
This will always have a higher power than using a mixture. 

If we push this idea further, we can take the set $\mathcal E$ of all e-variables for $\p$ that are decreasing functions of a statistic. This will result in an exact p-variable. 
\begin{proposition}\label{prop:function-precise}
	Let $\mathcal E$ be the set of all e-variables for $\p$ that are decreasing functions of a random variable $X$ with distribution function $F$. 
	Then, $(\sup_{E\in \mathcal E}E )^{-1}= F(X)$ almost surely, i.e., the natural p-variable built from $X$.  
\end{proposition}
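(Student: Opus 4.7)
The plan is to prove the identity by showing two matching inequalities between the random variables $S := \sup_{E \in \mathcal E} E$ and $1/F(X)$, where the supremum is understood pointwise in $\omega$. The proof is essentially a duality argument: the upper bound on $S$ comes from the e-variable constraint $\E[E] \le 1$ combined with monotonicity, and the lower bound comes from exhibiting, for each possible value of $X$, a specific "indicator" e-variable which attains that upper bound.

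For the upper bound $S \le 1/F(X)$ almost surely, I would fix any decreasing function $g:\R \to [0,\infty]$ with $\E[g(X)] \le 1$ and any $x \in \R$ with $F(x) > 0$. Since $g$ is decreasing, $g(X) \ge g(x)\id_{\{X \le x\}}$, so taking expectations gives $1 \ge \E[g(X)] \ge g(x)F(x)$, i.e.\ $g(x) \le 1/F(x)$. Applying this inequality at $x = X(\omega)$ for each $\omega$ in the a.s.\ set $\{F(X) > 0\}$ yields $E(\omega) = g(X(\omega)) \le 1/F(X(\omega))$, and taking the supremum over $E \in \mathcal E$ gives $S \le 1/F(X)$ a.s.

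For the lower bound $S \ge 1/F(X)$ a.s., I would explicitly construct, for each $x \in \R$ with $F(x) > 0$, the function $g_x(y) = (1/F(x))\id_{\{y \le x\}}$. This $g_x$ is clearly decreasing and nonnegative, and $\E[g_x(X)] = (1/F(x))\p(X \le x) = 1$, so $E_x := g_x(X) \in \mathcal E$. Now for any $\omega$ with $F(X(\omega)) > 0$, choosing $x = X(\omega)$ gives $E_x(\omega) = g_x(X(\omega)) = 1/F(X(\omega))$, so the pointwise supremum satisfies $S(\omega) \ge 1/F(X(\omega))$.

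The only remaining technicality is to dispose of the null set $\{F(X) = 0\}$. Letting $c = \sup\{y : F(y) = 0\}$, right-continuity of $F$ gives $F(c) = 0$, so $\p(F(X) = 0) = \p(X \le c) = F(c) = 0$. Combining the two bounds on the complementary a.s.\ set yields $S = 1/F(X)$ a.s., which is equivalent to $S^{-1} = F(X)$ a.s., as claimed. The main obstacle is really just the construction of the extremal family $\{g_x\}$ and the realization that Markov's inequality is tight within the decreasing-function class exactly along these indicators; once this is seen, both inequalities fall out in a line each.
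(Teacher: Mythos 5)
Your proof is correct and follows essentially the same route as the paper's: the same extremal indicator e-variables $E_x=(1/F(x))\id_{\{X\le x\}}$ deliver the lower bound, and the upper bound rests on the same monotonicity-plus-Markov observation, which you phrase as a direct inequality where the paper argues by contradiction. Your explicit handling of the null set $\{F(X)=0\}$ is a welcome extra bit of care (though note that right-continuity alone does not force $F(c)=0$ when $F$ has an atom at $c$; the conclusion $\p(F(X)=0)=0$ still holds because $\p(X<c)=F(c^-)=0$).
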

\begin{proof}
	For $s \in \R$, let $$E_s = \frac1 {F(s)}\id_{\{X\le s\}} \mbox{~ with the convention $0/0=1$},$$
	which is an e-variable, decreasing in $X$.
	It is clear that 
	$\sup_{s\in \R}E_s =1/F(X) $,
	and  hence 
	$\sup_{E\in \mathcal E}E \ge 1/ F(X)$.
	To show the opposite, 
	suppose that there exists $E\in \mathcal E$ such that $$\p\left(E>\frac1{F(X)}\right)>0.$$ 
	Since $E$ is decreasing in $X$,
	there exists $s\in \R$ such that $\p(E>E_s)>0$ and $\p(E\ge E_s)=1$. 
	This contradicts $\E[E]\le1$.
\end{proof}

Proposition \ref{prop:function-precise} suggests that the supremum of comonotonic e-values can be seen as a conceptual middle ground between an e-value ($\mathcal E$ is a singleton)  and a p-value ($\mathcal E$ in Proposition \ref{prop:function-precise}, which is a maximal comonotonic set).

The idea of taking a supremum does not generalize to optional stopping for the e-process because comonotonicity is not preserved in general. That is, for e-processes $n\mapsto E_\theta(n)$ indexed by $\theta$, even if the e-variables $E_\theta(n)$ are comonotonic across $\theta$ for each fixed $n$ (as in the normal example), the stopped e-variables $E_\theta(\tau)$ need not be comonotonic across $\theta$ for a stopping time $\tau$. 

\subsection{Implications in  the Gaussian setting}

When testing $\mathrm{N}(0,1)$ against $\{\mathrm{N}(\mu,1): \mu>0\}$, the likelihood ratio e-variables are comonotonic. Instead of using 
$$
E_\mu(n) =\exp( {\mu S_n - n \mu^2/2})
$$ 
for a fixed $\mu>0$ or its mixture, we can use
$$
Y(n)=\sup_{\mu>0} \exp( {\mu S_n  - n \mu^2/2}) ,
$$ 
which, although not being an e-variable, gives $\p(Y(n)\ge 1/\alpha)\le \alpha$ under the null. 
We can explicitly compute
$$
Y(n)=  \exp\left(\frac{(S_n)_+^2}{2n}\right).
$$ 
We can compare this with (23) of \cite{VW21}, which gives the e-variable $$ E_\Phi(n)= \frac{1}{\sqrt{n+1}} \exp\left(\frac{S_n^2}{2n+2}\right),$$
obtained via a mixture of $E_\mu(n)$ using a  standard normal prior $\Phi$ (a two-sided alternative).
Under the null, we can compute 
$$
\p(Y(n) \ge 1/\alpha) = \p \left(\frac{(S_n)_+^2}{2n} \ge - \log \alpha\right)=1-\Phi \left(  \sqrt{-2 \log \alpha}\right),
$$
where $\Phi$ is the standard normal cdf. Denote by $\phi$ the standard normal pdf. Since for $x>0$, 
$$
\frac{x}{x^2+1} \le  \frac{1-\Phi(x)}{\phi(x)}\le \frac 1x ,
$$
we get
$$ \p(Y(n) \ge 1/\alpha) \approx \frac{1}{\sqrt{-2\log \alpha}} \frac{1}{\sqrt {2\pi}} \alpha.$$
Although this is still less than $\alpha$, the magnitude has improved substantially from $E_\Phi(n)$, which roughly corresponds to replacing $\alpha $ by $\alpha^{(n+1)^{-1/2}}$.


\begin{remark}
  We also note that $E_\mu$, $\mu>0$ are   log-normally distributed, so we can use    the threshold   $T_\alpha(\mathcal{E}_{\rm LN})  = \exp(-(\Phi^{-1}(\alpha))^2/2),$
where $\Phi^{-1}$ is the inverse distribution function (quantile function) of a standard normal distribution. Then, it is easy to show  $\p\left(Y(n) > T_\alpha(\mathcal{E}_{\rm LN})\right) = \alpha$ for $\alpha \in (0, 1)$; that is, we get an exact type-I error. 
\end{remark}

\subsection{Maximum likelihood e-values}
\label{sec:43}

The test constructed in the previous section is based on taking the supremum over a collection of Gaussian likelihood ratio e-variables. We now investigate how to generalize this to other likelihood ratio e-variables. 

Suppose that we test $Q_{\theta_0}$ against $\{Q_{\theta} : \theta \in \Theta\}$ with a likelihood ratio e-variable,
$$E_\theta(n) := \prod_{i = 1}^{n} \frac{\d Q_{\theta}}{\d Q_{\theta_{0}}}(x_{i}).$$
If $\{E_\theta(n) : \theta \in \Theta\}$ is a collection of comonotonic random variables, we can take the supremum according to Proposition \ref{prop:sup-comonotonic}. We have 
$$\underset{\theta \in \Theta}{\arg\max} \prod_{i = 1}^{n} \frac{\d Q_{\theta}}{\d Q_{\theta_0}}(x_{i}) = \underset{\theta \in \Theta}{\arg\max} \prod_{i = 1}^{n} \left.\frac{\d Q_{\theta}(x)}{\d x}\right\vert_{x = x_i} ,$$
corresponding to the maximum likelihood estimator (MLE) over $\Theta$, denoted $\hat\theta$. Therefore, 
$$\underset{\theta \in \Theta}{\sup} E_\theta(n) = E_{\hat \theta}(n),$$
and we can use an e-value constructed with a plug-in estimator using the entire dataset instead of splitting the data as in the split likelihood-ratio e-test of \cite{WRB20}.

For $\{E_\theta(n) : \theta \in \Theta\}$ to be a collection of comonotonic random variables, we require that, for every $\theta \in \Theta$, $({\d Q_{\theta}}/{\d Q_{\theta_{0}}})(x)$ increases in $x$ over the union of the supports of $Q_{\theta}$ and $Q_{\theta_0}$. The latter condition coincides with the definition of the likelihood ratio order (see, for instance, Section 1.C of \cite{SS07} for properties).

One simple way to verify if a class of likelihood ratio e-variables is comonotonic is within the exponential family. For a parameter ${\theta}$ (possibly a vector), a family of distributions is said to belong to the exponential family if its density function or probability mass function can be written as 
$$\frac{\d Q_{{\theta}}(x)}{\d x} = h(x) \exp\left\{{\eta}({\theta}) {T}(x) - {A}({\theta})\right\},$$
where ${\eta}({\theta})$ is the natural parameter, ${T}(x)$ is the summary statistic, and $A(\theta)$ is called the log-partition. When testing two distributions in the exponential family, the likelihood ratio e-value (for one observation) is 
$$\frac{\d Q_{\theta}}{\d Q_{\theta_{0}}}(x) = \exp\left\{({\eta}({\theta}) - {\eta}({\theta}_0)) {T}(x) - {A}({\theta}) + {A}({\theta}_0)\right\}.$$
The latter forms a comonotonic family if ${\eta}({\theta}) - {\eta}({\theta}_0)$ has the same sign for each $\theta$. We will give examples of maximum likelihood e-values within the gamma family in Section \ref{ss:gamma-como}.

\subsection{Comonotonic adjustment to Wilcoxon's signed-rank e-test}

Our previous examples of comonotonic e-values are based on likelihood ratios. Comonotonic e-variables can also arise in nonparametric settings; we now provide an example for testing symmetry. Wilcoxon's signed-rank test was developed as a nonparametric alternative to the standard paired $t$-test for symmetry. It tests the null hypothesis that the data-generating distribution (e.g., the distribution of the difference between a pair of variables) is symmetric about zero, versus an alternative of asymmetry.  \cite{vovk2024nonparametric} derived a signed-rank e-variable based on Wilcoxon's signed-rank test, given by 
\begin{equation}\label{eq:wilcoxon}
    E_{\beta} = \exp(\beta V_n) \prod_{i = 1}^n \frac{2}{1 + \exp(\beta i)},
\end{equation}
for a parameter $\beta \in \mathbb{R}$, where $V_n $ is the sum of the ranks of the positive observations in the data.  
The collection of e-variables in \eqref{eq:wilcoxon} is comonotonic for $\beta < 0$ or $\beta > 0$.
For example, we can use the threshold $1/\alpha$ with the statistic
$\sup_{\beta > 0} E_{\beta}$
and get a valid level-$\alpha$ test. Taking the supremum also removes the reliance on $\beta$ in the family of signed-rank e-tests. 


\section{Thresholds for stopped e-processes}\label{sec:combined}

Researchers using e-values frequently rely on independent or sequential data when collecting evidence for scientific discovery. To perform an e-test from these data, a common strategy is to compute an e-value for each observation and combine them to form an e-value or an e-process. Denote by $[T]=\{1,\dots,T\}$, where $T$ can be either finite or infinite (when $T=\infty$, we set $[T]=\N$). We will assume that e-variables $E_1,\dots, E_T$ are available. We will assume that the e-variables are either independent or sequential. E-variables $(E_t)_{t\in [T]}$ are said to be \emph{sequential} if $\E[E_t \vert \mathcal F_{t-1}] \leq 1$ for $t \in [T]$, where $(\mathcal F_t)_{t\in \{0,1,\dots,T\}}$ is a given filtration to which $(E_t)_{t\in [T]}$ is adapted.

If one has access to sequential e-values, one can construct an e-process $\{M_t : t \in [T]\}$, accumulating evidence from individual sequential e-variables. E-processes have the property that, for any stopping time $\tau$, the stopped e-process is an e-value. The property $\E[M_\tau] \le 1$ for any stopping time is called the anytime validity property of e-processes, implying that a statistician can stop an experiment early or continue an experiment after peeking at the results from an e-test while retaining type-I error control. To test with an e-process while controlling the type-I error at level $\alpha$, it is common to use Ville's inequality, stating for $\alpha \in (0, 1]$ that 
$$\p\left(\sup_{t \in [T]} M_t \ge \frac{1}{\alpha}\right) \le \alpha.$$
Therefore, the \textit{hit-and-stop} strategy lets the statistician observe the e-process and stop the process once it crosses $1/\alpha$, all while controlling the type-I error at level $\alpha$.

In general, tightening Ville's inequality is challenging. The inequality fails to be an equality since the e-process often overshoots the boundary when it first exceeds $1/\alpha$; a tighter bound could be obtained by accounting for the distribution of the excess over the boundary. For this reason, it is generally difficult to improve upon the $1/\alpha$ threshold in the {hit-and-stop} strategy. By considering other stopping times, we can use the improved thresholds. In the following result, we provide an instance that relies on a restrictive assumption on the stopping time. More general stopping times are left for future research.

A process $Y$ is called predictable if $Y_t$ is measurable with respect to $\mathcal{F}_{t-1}$ for each $t \in [T]$. A stopping time $\tau  $ is called predictable if $\{\tau = t\} \in \mathcal{F}_{t-1}$ for every $t \in [T]$ and it takes values in $[T]$. In the context of the current paper, a predictable e-process is one where the final e-variable in the stopped e-process is independent of the stopping time. We note that stopped e-processes using the \textit{hit-and-stop} strategy are not predictable.

\begin{theorem}
\label{th:r1-predict}
    Let $(E_t)_{t\in [T]}$ be a sequence of independent e-variables and $M_t = \prod_{s = 1}^t E_s$. Let $\tau$ be a predictable  stopping time and assume that $E_t \in \mathcal{E}$ for $t\in [T]$.
    \begin{enumerate}
    \item [(i)] 
Let $r(x)= R_{x}(\mathcal E)$ for $x> 0$.
Then $r(x)\le x$ for $x>0$ and
 $$\p(M_\tau \geq 1/\gamma) \leq  \E[r(\gamma M_{\tau-1})] \le \gamma.$$    
    \item [(ii)] 
    
    Suppose there exist constants $\zeta, \theta\in (0,1]$ such that 
    ${R}_{\gamma}(\mathcal{E}) \leq \zeta \gamma$ for $\gamma \in [0, \theta)$ and let $\delta = \p(  \theta \le \gamma M_{\tau-1} < 1/\zeta)$. 
    Then, 
    $$\p(M_\tau \geq 1/\gamma) \le \zeta \gamma +  (1/\zeta-1)\delta  .$$
\item [(iii)]    If $\mathcal E$ is the class of e-variables with decreasing densities on $(0,\infty)$, then     $$\p(M_\tau \geq 1/\gamma) \leq \frac{ \gamma}2.$$\end{enumerate}
    
\end{theorem}

\begin{proof} 
We first show part (i).  
The fact that $r(x)\le x$ for $x>0$ is due to Markov's inequality.  
Let $S_t(y)=\p(E_t\ge y)$ for $y\in \R$ and $t\in [T]$, and by definition, it  satisfies $S_t(1/x)\le r(x)$ for $x>0$. 
Since $\{\tau=t\}$ is in $\mathcal F_{\tau-1}$, for any $Y$ that is measurable to $\mathcal F_{\tau-1}$, we have 
\begin{align*}
  \p(E_\tau \ge  Y\mid \mathcal F_{\tau-1})& =  
\sum_{t\in [T]} \E \left[\id_{\{E_t \ge  Y\}}  \mid \mathcal F_{\tau-1}  \right] \id_{\{\tau=t\}} 
\\& = \sum_{t\in [T]} S_t(Y)  \id_{\{\tau=t\}} \le 
  \sum_{t\in [T]}  r(1/Y)   \id_{\{\tau=t\}} =
r(1/Y).
\end{align*}   
Note that $M_{\tau} = M_{\tau - 1}E_\tau$ and $M_{\tau - 1}$ is $\mathcal{F}_{\tau - 1}$-measurable. 
Therefore, we get
 $$
    \p(E_\tau  \ge  1/(\gamma M_{\tau-1}) \ \vert \ \mathcal F_{\tau-1}) \leq r(\gamma M_{\tau-1}), 
 $$
and hence 
$$\p(M_{\tau} \ge 1/\gamma) =   
\E\left[\p\left(M_{\tau} > 1/\gamma \ \vert \ \mathcal{F}_{\tau-1}\right)\right] 
\le \E\left[r(   \gamma M_{\tau - 1})\right]  \le \E\left[   \gamma M_{\tau - 1} \right]  \le \gamma.
$$

To show part (ii),  let $A=\{ \theta \le \gamma M_{\tau-1} <1/\zeta\}$. We have 
\begin{align*}
\E[r(\gamma M_{\tau-1}) ]
  &\le \E\left[\zeta \gamma M_{\tau-1}\id_{A^c } + \gamma M_{\tau-1}\id_{A }\right]
  \\ &= \E\left[ \zeta \gamma  M_{\tau-1} +
   \gamma M_{\tau-1}(1-\zeta) \id_{A }  
  \right]
  \\& \le \zeta \gamma +  (1/\zeta-1)\delta.
\end{align*}

To show part (iii),  when $\mathcal{E}$ is the class of e-variables with decreasing densities on $(0, \infty)$, we have from Proposition 6.4 of \cite{W24} that $R_\gamma(\mathcal{E}) \le \gamma/2$ for every $\gamma>0$ (see also Remark \ref{rem:dec}), and hence the conclusion follows from part (i).
\end{proof}

If the stopping time $\tau$ is known in advance, as in classical p-tests,  then the stopping time is predictable. 
If the stopping time $\tau$ is independent of the e-process, we could include it in $\mathcal F_0$, so that it becomes predictable. 
Another example of a predictable stopping time is to stop one step (or more) after the e-process hits a threshold; one could imagine a termination notice on an experiment with a grace period.

The function $r$ in part (i)  of Theorem \ref{th:r1-predict}  
is computed in many results in Section \ref{sec:3}, and when explicit formula for $R_\gamma(\mathcal E)$ is not available, we can safely take $r$ as an upper bound on it.
The difficulty in using part (i) in practice is that we need the distribution of $M_{\tau-1}$ to compute $\E[r(\gamma M_{\tau-1})]$, which is often not available, although we know it is bounded above by $\gamma$.
The condition $\mathcal{R}_{\gamma}(\mathcal{E}) \leq \zeta \gamma$ for   $\gamma \in [0, \theta)$  in part (ii) 
is satisfied by many choices of $\mathcal E$ and moderate values of $\theta$ and $\zeta$ like $\theta=\zeta=0.6$; this can be easily verified in Figures \ref{fig:threshold-comparison-1}--\ref{fig:threshold-comparison-2}. The improvement is most useful when $\delta$ is small, which still requires some partial information about the distribution of $M_{\tau-1}$. 
On the other hand, the conclusion in part (iii) does not require any knowledge on the distribution of $M_{\tau-1}$.


\begin{remark}
In the special case that the stopping time $\tau$ is independent of the e-process $M$ satisfying $M_t \in \mathcal{E}$ for all $t\in [T]$,   it is straightforward to show that  $M_\tau$ is in the convex hull of $\mathcal{E}$ with respect to distribution mixtures, and we can use the improved threshold for $\mathcal E$ with this stopped e-process (see Proposition \ref{prop:R1-1}).
\end{remark}

\begin{remark}
For the conclusion in Theorem \ref{th:r1-predict}, we only need the distribution of $E_{\tau}$ given $\mathcal F_{\tau-1}$ satisfies the constraint in 
$ \mathcal E$.
For instance, if we know $\tau$ takes values in an interval $[t_1,t_2]$, then only $E_t$ for $t\in [t_1,t_2]$ needs to be in $\mathcal E$. 
\end{remark}

In Appendix \ref{app:combined}, we derive additional results on the improved thresholds for combined e-variables in a fixed-sample setting.

\section{Boosting e-values in the e-BH procedure}\label{sec:boosting}

The e-BH procedure of \cite{WR22}
is introduced to test $K$ hypotheses with a false discovery rate (\cite{BH95}) control. 
As with e-tests for hypothesis testing, 
the base e-BH procedure typically controls the FDR 
at a more conservative level than the desired (or target) FDR. 
To improve the power of the e-BH procedure, we can boost the e-values
in the e-BH procedure such that it approaches the desired FDR level, 
while still controlling it.  
There are two known ways to accomplish this task. 
The first is to use distributional information 
on the marginal e-variables to boost the e-values
individually. This was proposed by \cite{WR22} already. 
Another method to boost the e-values is to use a relaxed definition of 
e-values, as in \cite{RB24}, possibly supplementing this with 
dependence information through conditional calibration as in \cite{LR24}. 

Since this paper aims to incorporate distributional assumptions through the marginal information on e-variables, we will focus on marginal boosting. We admit that the results in this section are mainly exploratory. The only positive results we obtained require strong assumptions (log-concave survival functions) and may not be readily applicable in practical situations. 
The proofs for the results in this section are given in Appendix \ref{app:boosting-proof}.

To boost e-values in the e-BH procedure through marginal boosting, we need to find constants $b_k$ 
in \citet[Equations (8) and (9)]{WR22}. 
That is, for an e-variable $E$, to find 
a constant $b>0$ (the larger, the better) such that 
    \begin{align}
 \E[T(\alpha b E )]\le \alpha,
    \label{eq:enhance-2}  
 \end{align} 
where $\mathcal K := [K]$, 
$K/\mathcal K :=\{K/k:k\in \mathcal K\}$, and $T:[0,\infty]\to [0,K]$ is defined by 
 $$ 
T(x)  = \frac{K}{\lceil  K/x\rceil}\id_{\{x\ge 1\}}\mbox{~~with $T(\infty)=K$}.
 $$
Noting that $\E[ T(\alpha E)]\le \E[\alpha E]\le  \alpha$, we know that $b = 1$ satisfies \eqref{eq:enhance-2}.  
If $b\mapsto  \E[T(\alpha b E )]$ is continuous (it suffices if $E$ has density on $(0,\infty)$), then we will take 
$$
b= \max\{ c \ge 1: \E[T(\alpha c E )]\le \alpha\}$$
under arbitrary dependence (AD). If the e-values in $K$ hypotheses satisfy the property of positive  regression dependence on a subset (PRDS), then $b$ can be improved to a constant satisfying 
 \begin{align}
 \label{eq:enhance-1}    \max_{x\in K/\mathcal K}  x  {\p(\alpha b E  \ge x)}   
 \le \alpha  .
  \end{align}
  Since $T$ depends on $K$,  it may not be convenient to analyze. We can relax \eqref{eq:enhance-2}, by using $T(x) \le x\id_{\{x\ge 1\}} $ for $x\in \R$, to 
      \begin{align}
 \E \left[ \alpha b E \id_{\{ \alpha b E \ge 1\}}\right]\le \alpha,
    \label{eq:enhance-3}  
 \end{align}
and  relax \eqref{eq:enhance-1} to 
       \begin{align}
\max_{x \ge 1}  x  {\p(\alpha b E  \ge x)}   \le \alpha.
    \label{eq:enhance-4}  
 \end{align}
 We define the quantity $B^{\rm AD}_\alpha (\mathcal E)$ 
 as 
 $$B^{\rm AD}_\alpha (\mathcal E) = \inf_{E\in \mathcal E}\sup\left\{ c \ge 1:  \E \left[ \alpha c E \id_{\{ \alpha c E \ge 1\}}\right]\le \alpha \right\},$$
 which is the boosting factor under AD for $E\in \mathcal E$ assuming continuity.
Similarly, 
we define  $$B^{\rm PR}_\alpha (\mathcal E) = \inf_{E\in \mathcal E}\sup\left\{ c \ge 1:\max_{x\ge 1}  x  {\p(\alpha c E  \ge x)}   \le \alpha \right\},$$
which is the boosting factor for $E\in \mathcal E$, assuming continuity in a PRDS multiple testing problem.

We first present a negative result on decreasing densities. This result suggests that getting a useful boosting factor for other similar distributional information may be difficult. In particular, there is no hope for any set containing $\mathcal E_{\rm D}$, such as $\mathcal E_{\rm U}$ and $\mathcal E_{\rm D>1}$.
\begin{proposition}\label{prop:e-bh-decreasing}
	For any $\alpha \in (0, 1)$, we have $B^{\rm AD}_\alpha (\mathcal E_{\rm D}) = 1$.
\end{proposition}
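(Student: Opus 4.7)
The plan is to show both inequalities in $B^{\rm AD}_\alpha(\mathcal E_{\rm D}) = 1$. The lower bound $B^{\rm AD}_\alpha(\mathcal E_{\rm D}) \ge 1$ is automatic: for any e-variable $E$, at $c=1$ one has $\E[\alpha E \id_{\{\alpha E \ge 1\}}] \le \E[\alpha E] \le \alpha$, so $c=1$ belongs to the set whose supremum we take, and therefore that supremum is at least $1$ for every $E\in\mathcal E_{\rm D}$.

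The content is the matching upper bound. I would exhibit, for each $\epsilon>0$, an explicit $E\in\mathcal E_{\rm D}$ for which $\sup\{c\ge 1:\E[\alpha c E \id_{\{\alpha c E\ge 1\}}]\le\alpha\}\le 1+\epsilon$. The natural candidate, inspired by the extremal construction appearing in the proof of Theorem~\ref{th:dec-uni}(i), is the mixture $E_b$ consisting of a point-mass at $0$ with probability $1-2/b$ and a uniform distribution on $[0,b]$ with probability $2/b$, for $b\ge 2$. This has constant (hence decreasing) density on its support with a point-mass at the left end-point $0$, so it lies in $\mathcal E_{\rm D}$ under the paper's convention, and a direct calculation gives $\E[E_b]=1$.

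The key step is then to compute $g_b(c):=\E[\alpha c E_b \id_{\{\alpha c E_b\ge 1\}}]$. For $c\ge 1/(\alpha b)$, conditioning on the uniform component yields
\[
g_b(c) \;=\; \alpha c \cdot \frac{2}{b}\int_{1/(\alpha c)}^{b}\frac{x}{b}\,\d x \;=\; \alpha c \left(1 - \frac{1}{\alpha^2 c^2 b^2}\right) \;=\; \alpha c - \frac{1}{\alpha c\, b^2}.
\]
The condition $g_b(c)\le \alpha$ therefore reduces, for $c\ge 1$, to $c(c-1)\le 1/(\alpha^2 b^2)$. Solving, the supremum in the definition of $B^{\rm AD}_\alpha$ at $E_b$ equals $\tfrac12\bigl(1+\sqrt{1+4/(\alpha^2 b^2)}\bigr)$, which tends to $1$ as $b\to\infty$. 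Taking $b$ large enough drives this supremum below $1+\epsilon$, and since $\epsilon>0$ was arbitrary, $B^{\rm AD}_\alpha(\mathcal E_{\rm D})\le 1$, completing the proof.

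I do not anticipate a serious obstacle: the main conceptual point is that the infimum in $B^{\rm AD}_\alpha$ is over all of $\mathcal E_{\rm D}$, so a single worst-case sequence is enough to collapse the boosting factor to $1$. The delicate piece is merely to verify that $E_b$ genuinely lies in $\mathcal E_{\rm D}$ (handled by the paper's allowance of a point-mass at the left end-point) and that the truncated-mean computation gives the correct expression on both sides of the threshold $1/(\alpha c)$; the latter is a short integration.
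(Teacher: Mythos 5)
Your proposal is correct and uses essentially the same construction as the paper: the paper's extremal e-variable is a point-mass at $0$ mixed with a uniform on $[0,2/w]$ of weight $w$, which is exactly your $E_b$ under the reparametrization $w=2/b$. The only difference is that you evaluate the constraint at general $c$ and solve for the supremum explicitly (obtaining $\tfrac12\bigl(1+\sqrt{1+4/(\alpha^2b^2)}\bigr)\to 1$), whereas the paper only records the near-equality at $c=1$; your version is slightly more explicit but not a different argument.
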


Next, two positive results concern e-variables with log-concave survival functions.  

\begin{proposition}\label{th:ad-ld}
	For some $\alpha \in (0, 1)$, let $c^{\rm AD}_1(\alpha) $ be  the constant $b\ge 1$ such that $\exp\left\{- 1 /(\alpha b)\right\}(1 + \alpha b) = \alpha \exp\{-1\}$ and $c^{\rm AD}_2(\alpha)$ be the constant $b'\ge 1$ such that $\exp\left\{- 1 /(\alpha b')\right\}(1 + \alpha b') = \alpha$. Then, we have
$$c^{\rm AD}_1(\alpha)\le B^{\rm AD}_\alpha\left(\mathcal{E}_{\rm LCS}\right) \le c^{\rm AD}_2(\alpha).$$
\end{proposition}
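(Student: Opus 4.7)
The plan is to identify the standard exponential distribution as the extremal element of $\mathcal E_{\rm LCS}$ and to use the uniform survival bound from Theorem \ref{thm:log-distribution}(i) to control the truncated moment $\mathbb{E}[E \id_{\{E \ge t\}}]$. The starting reformulation is to write the constraint $\mathbb{E}[\alpha c E \id_{\{\alpha c E \ge 1\}}] \le \alpha$ in the equivalent form $\mathbb{E}[E \id_{\{E \ge 1/(\alpha c)\}}] \le 1/c$, and to use the layer-cake identity
\[
\mathbb{E}[E \id_{\{E \ge t\}}] = t\,\p(E \ge t) + \int_t^\infty \p(E \ge x)\,\d x.
\]

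For the lower bound $B^{\rm AD}_\alpha(\mathcal E_{\rm LCS}) \ge c^{\rm AD}_1(\alpha)$, I would apply the bound $\p(E \ge x) \le e^{1-x}$ for $x \ge 1$, which is exactly the content of the upper estimate in Theorem \ref{thm:log-distribution}(i) rewritten via $\gamma = 1/x$. Inserting this bound into both terms of the layer-cake identity gives $\mathbb{E}[E \id_{\{E \ge t\}}] \le (t+1)e^{1-t}$ for all $t \ge 1$ uniformly in $E \in \mathcal E_{\rm LCS}$. Setting $t = 1/(\alpha c)$, requiring the bound to be at most $1/c$, and multiplying through by $\alpha c$ yields the condition $(1+\alpha c)e^{-1/(\alpha c)} \le \alpha/e$, which is precisely the defining equation of $c^{\rm AD}_1(\alpha)$. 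Hence $c^{\rm AD}_1(\alpha)$ is admissible in the supremum defining $B^{\rm AD}_\alpha$ for every $E \in \mathcal E_{\rm LCS}$.

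For the upper bound $B^{\rm AD}_\alpha(\mathcal E_{\rm LCS}) \le c^{\rm AD}_2(\alpha)$, I would take $E$ to be standard exponential, which lies in $\mathcal E_{\rm LCS}$ since $\bar F(x) = e^{-x}$ is log-linear (hence log-concave) and $\mathbb{E}[E] = 1$. A direct integration gives $\mathbb{E}[\alpha c E \id_{\{\alpha c E \ge 1\}}] = (1+\alpha c) e^{-1/(\alpha c)}$, and this quantity is strictly increasing in $c$ (as an easy derivative check shows), so the largest admissible $c$ for this particular $E$ is exactly the unique solution $c^{\rm AD}_2(\alpha)$ of $(1+\alpha c)e^{-1/(\alpha c)} = \alpha$. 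Since $B^{\rm AD}_\alpha$ is defined as an infimum over $\mathcal E_{\rm LCS}$, this exhibiting of the exponential distribution gives the desired upper bound.

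The main technical obstacle is the admissibility check $1/(\alpha c^{\rm AD}_1(\alpha)) \ge 1$, which is needed so that the survival bound $e^{1-x}$ may legitimately be applied on the relevant range. This reduces to verifying $\alpha c^{\rm AD}_1(\alpha) \le 1$, and I would handle it by noting that $u \mapsto (1+u)e^{-1/u}$ is strictly increasing with value $2/e$ at $u = 1$; since the defining equation for $c^{\rm AD}_1$ sets this function equal to $\alpha/e < 2/e$, the solution $u = \alpha c^{\rm AD}_1(\alpha)$ must lie in $(0,1)$. A small side remark is that if instead $c^{\rm AD}_1(\alpha) < 1$ for some $\alpha$, the lower bound is trivial from the constraint $c \ge 1$ in the definition of $B^{\rm AD}_\alpha$; one can verify this does not occur in the practically relevant range of small $\alpha$.
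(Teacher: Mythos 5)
Your proof is correct, and the upper bound (plugging in the standard exponential, whose truncated mean $(1+\alpha c)e^{-1/(\alpha c)}$ is increasing in $c$) is exactly the paper's argument. For the lower bound you take a genuinely different, and arguably cleaner, route. The paper factors $\E[\alpha b E \id_{\{\alpha b E\ge 1\}}]=\alpha b\,\p(E\ge t)\,\E[E\mid E\ge t]$ with $t=1/(\alpha b)$, bounds the probability by $e^{1-t}$ via Theorem \ref{thm:log-distribution}(i), and bounds the conditional mean by $1+t$ via a separate stochastic-order lemma (Lemma \ref{lemma:log-concave-stochastic-order}, which invokes the dispersive-order characterization of increasing failure rate from Shaked and Shanthikumar). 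You instead use the layer-cake identity $\E[E\id_{\{E\ge t\}}]=t\,\p(E\ge t)+\int_t^\infty \p(E>x)\,\d x$ and apply the single survival envelope $\p(E>x)\le e^{1-x}$ to both terms, obtaining the identical bound $(t+1)e^{1-t}$ and hence the same defining equation for $c_1^{\rm AD}(\alpha)$. What your approach buys is self-containedness: it needs only part (i) of Theorem \ref{thm:log-distribution} and no auxiliary lemma about mean residual life, at the cost of integrating a worst-case envelope that no single distribution attains uniformly (though the paper's product bound is equally non-joint, so neither method loses tightness relative to the other). You also explicitly verify the admissibility condition $\alpha c_1^{\rm AD}(\alpha)\le 1$ needed to apply the survival bound on $[1/(\alpha c),\infty)$, a check the paper leaves implicit; your monotonicity argument for $u\mapsto(1+u)e^{-1/u}$ with value $2/e$ at $u=1$ settles it correctly.
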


In Appendix \ref{app:boosting-proof}, we show that $\lim_{\alpha \to 0^+} \frac{c_2^{\rm AD}(\alpha)}{c_1^{\rm AD}(\alpha)} = 1$, implying that the lower and upper bounds in Proposition \ref{th:ad-ld} coincide when $\alpha$ is approaches zero. The next result gives bounds for boosting factors of e-variables with log-concave densities under PRDS. 
\begin{proposition}\label{th:prds-ld}
Let $\alpha \in (0, 1)$. Define 
 $$c_1^{\rm PR}(\alpha) = 
     \frac{1}{\alpha - \alpha \log \alpha} 
 ~~~~\mbox{and}~~~~  c_2^{\rm PR}(\alpha) = \begin{cases}
	    \exp\{1\}, & \alpha \ge \exp\{-1\};\\
     -\frac{1}{\alpha \log \alpha}, & \alpha \le \exp\{-1\}.
	\end{cases}$$ 
        We have  
		$$c_1^{\rm PR}(\alpha) \le B_{\alpha}^{\mathrm{PR}}\left(\mathcal E_{\rm LCS}\right) \le c_2^{\rm PR}(\alpha).$$
\end{proposition}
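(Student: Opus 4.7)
The plan is to establish the two bounds separately, leveraging Theorem \ref{thm:log-distribution}(i) in both directions. Recall that Theorem \ref{thm:log-distribution}(i) gives $R_\gamma(\mathcal E_{\rm LCS}) \le e^{1-1/\gamma}$, and that this bound is attained up to a factor of $e$ by the standard exponential distribution, which will serve as the extremal example for the upper bound.

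For the lower bound $c_1^{\rm PR}(\alpha) \le B_\alpha^{\rm PR}(\mathcal E_{\rm LCS})$, I would start from the tail bound $\p(E\ge y)\le e^{1-y}$ for every $E\in\mathcal E_{\rm LCS}$ and every $y\ge 1$, obtained from Theorem \ref{thm:log-distribution}(i) via $\gamma=1/y$. Setting $c=c_1^{\rm PR}(\alpha)=1/(\alpha(1-\log\alpha))$ makes $1/(\alpha c)=1-\log\alpha\ge 1$, and the change of variable $y=x/(\alpha c)$ turns the PRDS boosting condition into
\[
\max_{x\ge 1} x\,\p(\alpha c E\ge x) \;\le\; \max_{y\ge 1-\log\alpha} \alpha c y\, e^{1-y}.
\]
Since $y\mapsto y e^{1-y}$ is decreasing on $[1,\infty)$, the right-hand maximum is attained at the left endpoint $y=1-\log\alpha$, and plugging in gives $\alpha c(1-\log\alpha)\cdot\alpha = \alpha$, exactly matching the budget. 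This shows $c_1^{\rm PR}(\alpha)$ is a feasible boosting factor for every $E\in\mathcal E_{\rm LCS}$.

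For the upper bound $B_\alpha^{\rm PR}(\mathcal E_{\rm LCS}) \le c_2^{\rm PR}(\alpha)$, I would exhibit a single witness: the standard exponential $E\sim\mathrm{Exp}(1)$, which has $\E[E]=1$ and the log-concave survival function $e^{-x}$, hence lies in $\mathcal E_{\rm LCS}$. For this $E$,
\[
\max_{x\ge 1} x\,\p(\alpha c E\ge x) \;=\; \max_{x\ge 1} x\, e^{-x/(\alpha c)},
\]
and the unconstrained maximizer is $x^{\ast}=\alpha c$ with maximal value $\alpha c/e$, while the function is decreasing on $[x^{\ast},\infty)$.

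The step that requires careful case analysis, and is the main (mild) obstacle, is locating this maximum relative to the constraint $x\ge 1$, which produces the piecewise form of $c_2^{\rm PR}(\alpha)$. When $\alpha c\ge 1$, the interior maximum $\alpha c/e$ must satisfy $\alpha c/e\le \alpha$, i.e.\ $c\le e$; this case is jointly feasible ($c\ge 1/\alpha$ and $c\le e$) exactly when $\alpha\ge e^{-1}$. When $\alpha c<1$, the function is decreasing on $[1,\infty)$, so the maximum sits at $x=1$ with value $e^{-1/(\alpha c)}$, and the constraint $\le\alpha$ becomes $c\le -1/(\alpha\log\alpha)$; for $\alpha\le e^{-1}$ one checks that $-1/(\alpha\log\alpha)\le 1/\alpha$, so this boundary case supplies the binding bound. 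Combining the two regimes yields $c_2^{\rm PR}(\alpha)=e$ for $\alpha\ge e^{-1}$ and $c_2^{\rm PR}(\alpha)=-1/(\alpha\log\alpha)$ for $\alpha\le e^{-1}$, completing the proof.
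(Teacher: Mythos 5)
Your proof is correct and takes essentially the same route as the paper's: the lower bound follows from the tail bound $\p(E\ge y)\le e^{1-y}$ of part (i) of Theorem \ref{thm:log-distribution} together with the monotonicity of $y\mapsto y e^{1-y}$ beyond its mode, and the upper bound uses the $\mathrm{Exp}(1)$ witness with the same case split according to whether the mode $\alpha c$ of $x\mapsto x e^{-x/(\alpha c)}$ lies above or below $1$. The only cosmetic difference is your substitution $y=x/(\alpha c)$ in the lower bound, which the paper carries out directly in $x$.
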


We can obtain numerically the bounds in Propositions \ref{th:ad-ld} and \ref{th:prds-ld}. We present the lower and upper bounds for $B^{\rm AD}_{\alpha}\left(\mathcal E_{\rm LCS}\right)$ and $B_{\alpha}^{\mathrm{PR}}\left(\mathcal E_{\rm LCS}\right)$ in Table \ref{tab:ld-boosting}. We note that $c_1^{\rm AD}(\alpha)$ is only slightly smaller than $c_1^{\rm PR}(\alpha)$, so one doesn't gain much boosting power by assuming PRDS (and the p-BH procedure is valid under PRDS and typically makes more discoveries, so the e-BH procedure may not be the best procedure to use in this case).

\begin{table}[ht]
	\centering
	\caption{Lower and upper bounds for $B^{\rm AD}_{\alpha}\left(\mathcal E_{\rm LCS}\right)$ and $B_{\alpha}^{\mathrm{PR}}\left(\mathcal E_{\rm LCS}\right)$.}\label{tab:ld-boosting}
	\begin{tabular}{rrrrr}
		$\alpha$ & $c_1^{\rm AD}(\alpha)$ & $c_2^{\rm AD}(\alpha)$ & $c_1^{\rm PR}(\alpha)$ & $c_2^{\rm PR}(\alpha)$ \\ \hline
		    0.01 &                  17.35 &                  20.86 &                  17.84 &                  21.71 \\
		    0.02 &                   9.82 &                  12.11 &                  10.18 &                  12.78 \\
		    0.05 &                   4.75 &                   6.13 &                   5.01 &                   6.68 \\
		    0.10 &                   2.82 &                   3.81 &                   3.03 &                   4.34
	\end{tabular}
\end{table}

\section{Simulation studies}
\label{sec:simulation}

In this section, we utilize the results derived in this paper to demonstrate how the improved thresholds enhance the power of e-tests through simulation. 

\subsection{Tests within the Gaussian family}\label{ss:simulation-gaussian}

A simple example illustrates the difference between e-processes and the supremum process. Suppose we have a sequence of $N$ iid observations $X_1, \dots, X_N$ from $N(\delta, 1)$ with $\delta = 0.3$ and let $S_n = X_1 + \dots + X_n$ for $n\in [N]$. 
We want to test the null hypothesis that data are $\mathrm{N}(0, 1)$ distributed
using different test statistics: the likelihood ratio e-process $\{E_\mu(n)\}_{n\in [N]}$ with the true alternative $\mu = \delta$ and two likelihood ratio e-processes with misspecified alternatives $\mu = 0.2$ and $\mu = 0.4$. We further compare with the universal e-process $\{E_\Phi(n)\}_{n\in [N]}$ and the supremum process $\{Y(n)\}_{n\in [N]}$. Note that the supremum process controls the type-I error for fixed $n$ but is not anytime-valid. In Appendix \ref{ss:more-simulations-gaussian}, we present some representative simulated curves of these processes.

Each $E_{\mu}(n)$ is log-normally distributed; hence we have a valid test by using the thresholds $T_{\alpha}(\mathcal{E}_0)$, $T_{\alpha}(\mathcal{E}_{\rm U})$, $T_{\alpha}(\mathcal{E}_{\rm LUS})$ or $T_{\alpha}(\mathcal{E}_{\rm LN})$. Since $\mathcal{E}_0 \supseteq \mathcal{E}_{\rm U} \supseteq \mathcal{E}_{\rm LUS} \supseteq \mathcal{E}_{\rm LN}$, we have $
T_{\alpha}(\mathcal{E}_0) \geq T_{\alpha}(\mathcal{E}_{\rm U}) \geq T_{\alpha}(\mathcal{E}_{\rm LUS}) \geq T_{\alpha}(\mathcal{E}_{\rm LN})$, so using $T_{\alpha}(\mathcal{E}_{\rm LN})$ will yield the higher power out of the four sets of distributional hypotheses. Further, since $\{E_\mu : \mu > 0\}$ is a collection of comonotonic and log-normal e-variables, Lemma \ref{lemma:comonotonic-survival} states that we can also use the threshold of $T_{\alpha}(\mathcal{E}_{\rm LN})$ on the supremum of the collection of e-variables. Finally, one may show that $E_{\Phi}$ has a decreasing density, so we may use the thresholds $T_{\alpha}(\mathcal{E}_{\rm 0})$ or $T_{\alpha}(\mathcal{E}_{\rm D})$ to control the type-I error at $\alpha \in (0, 1)$. For this simulation study, we will use $\alpha = 0.05$; we recall that numerical values for the improved thresholds are provided in Table \ref{tab:improved-threshold}. 

\begin{table}[ht]
	\centering
	\caption{Left-hand side: simulated rejection rates for tests up to $n$ observations with type-I error control of $\alpha = 0.05$. Right-hand side: number of observations required to attain a simulated rejection rate of $\beta$ with a type-I error control of 0.05. By default, tests are based on a fixed $n$, while optional stopping (OS) tests consider the rolling maximum of the e-process.}\label{tab:rejection-rates-alt}
\begin{tabular}{rr|rrrr|rrrr}
	                         &                                               &  \multicolumn{4}{c}{$n$}  & \multicolumn{4}{c}{$\beta$} \\
	                    Test &                                     Threshold &   10 &   50 &  100 &  500 & 0.5 & 0.9 & 0.95 &     0.99 \\
	                    	                    	\cmidrule(lr){1-1} \cmidrule(lr){2-2}	    \cmidrule(lr){3-6} \cmidrule(lr){7-10}
	                         &       $\mathcal{T}_{\alpha}(\mathcal{E}_{0})$ &    0 & 0.24 & 0.69 &    1 &  75 & 153 &  183 &      258 \\
	              Likelihood &   $\mathcal{T}_{\alpha}(\mathcal{E}_{\rm U})$ &    0 & 0.41 & 0.80 &    1 &  58 & 129 &  160 &      234 \\
	                   ratio & $\mathcal{T}_{\alpha}(\mathcal{E}_{\rm LUS})$ & 0.01 & 0.49 & 0.84 &    1 &  51 & 120 &  150 &      226 \\
	             $\mu = 0.2$ &  $\mathcal{T}_{\alpha}(\mathcal{E}_{\rm LN})$ & 0.07 & 0.68 & 0.91 &    1 &  34 &  96 &  123 &      194 \\
	                         &    OS+$\mathcal{T}_{\alpha}(\mathcal{E}_{0})$ &    0 & 0.29 & 0.77 &    1 &  67 & 135 &  162 &      229 \\ \hline
	                         &       $\mathcal{T}_{\alpha}(\mathcal{E}_{0})$ &    0 & 0.36 & 0.69 &    1 &  67 & 179 &  227 &      361 \\
	              Likelihood &   $\mathcal{T}_{\alpha}(\mathcal{E}_{\rm U})$ & 0.03 & 0.49 & 0.77 &    1 &  52 & 158 &  206 &      321 \\
	                   ratio & $\mathcal{T}_{\alpha}(\mathcal{E}_{\rm LUS})$ & 0.05 & 0.54 & 0.80 &    1 &  45 & 145 &  193 &      312 \\
	             $\mu = 0.3$ &  $\mathcal{T}_{\alpha}(\mathcal{E}_{\rm LN})$ & 0.17 & 0.67 & 0.86 &    1 &  31 & 124 &  171 &      286 \\
	                         &    OS+$\mathcal{T}_{\alpha}(\mathcal{E}_{0})$ &    0 & 0.46 & 0.80 &    1 &  54 & 138 &  177 &      272 \\ \hline
	                         &       $\mathcal{T}_{\alpha}(\mathcal{E}_{0})$ & 0.02 & 0.36 & 0.59 & 0.97 &  75 & 283 &  391 &      681 \\
	              Likelihood &   $\mathcal{T}_{\alpha}(\mathcal{E}_{\rm U})$ & 0.07 & 0.46 & 0.66 & 0.98 &  59 & 257 &  362 &      647 \\
	                   ratio & $\mathcal{T}_{\alpha}(\mathcal{E}_{\rm LUS})$ & 0.10 & 0.50 & 0.69 & 0.98 &  51 & 247 &  347 &      633 \\
	             $\mu = 0.4$ &  $\mathcal{T}_{\alpha}(\mathcal{E}_{\rm LN})$ & 0.23 & 0.59 & 0.75 & 0.98 &  34 & 219 &  322 &      604 \\
	                         &    OS+$\mathcal{T}_{\alpha}(\mathcal{E}_{0})$ & 0.03 & 0.49 & 0.75 & 0.99 &  51 & 179 &  246 &      451 \\ \hline
	              \multirow{4}{*}{Supremum}           &       $\mathcal{T}_{\alpha}(\mathcal{E}_{0})$ & 0.07 & 0.37 & 0.71 &    1 &  67 & 153 &  183 &      253 \\
	                 &   $\mathcal{T}_{\alpha}(\mathcal{E}_{\rm U})$ & 0.12 & 0.49 & 0.80 &    1 &  52 & 128 &  159 &      221 \\
	              & $\mathcal{T}_{\alpha}(\mathcal{E}_{\rm LUS})$ & 0.15 & 0.54 & 0.84 &    1 &  45 & 120 &  147 &      207 \\
	                         &  $\mathcal{T}_{\alpha}(\mathcal{E}_{\rm LN})$ & 0.24 & 0.68 & 0.91 &    1 &  31 &  95 &  119 &      174 \\ \hline
	\multirow{3}{*}{Mixture} &       $\mathcal{T}_{\alpha}(\mathcal{E}_{0})$ & 0.02 & 0.14 & 0.39 &    1 & 122 & 240 &  280 &      371 \\
	                         &   $\mathcal{T}_{\alpha}(\mathcal{E}_{\rm U})$ & 0.03 & 0.20 & 0.48 &    1 & 104 & 217 &  254 &      338 \\
	                         &    OS+$\mathcal{T}_{\alpha}(\mathcal{E}_{0})$ & 0.04 & 0.25 & 0.53 &    1 &  95 & 211 &  249 &      333
\end{tabular}
\end{table}

For some values of fixed $n$, 
Table \ref{tab:rejection-rates-alt} presents simulated rejection rates for 10,000 replications with type-I error control of $\alpha = 0.05$ when the simulated data come from $\mathrm{N}(0.3, 1)$. It also presents the number of observations required to obtain a rejection rate of $\beta$ for $\beta \in \{0.5, 0.9, 0.95, 0.99\}$ (based on simulated data). 

As expected, decreasing the threshold always improves the rejection rate. The improvement in rejection rate is considerable for low values of $n$ when there is little evidence from data. It is usually the case that using improved thresholds provides more power than using the optional stopping property of e-processes (although the optional stopping property also allows one to continue gathering data, which improved thresholds do not). Taking the supremum of comonotonic e-variables with the threshold $T_{\alpha}(\mathcal{E}_{\rm LN})$ leads to the highest rejection rates among every test considered.

In Appendix \ref{ss:more-simulations-gaussian}, we provide a table showing simulated rejection rates when data come from the null. The type-I error is controlled at 0.05 for all tests. The test based on the supremum process using the improved threshold of $\mathcal{T}_{\alpha}(\mathcal{E}_{\rm LN})$ achieves a simulated type-I error equal to the target level, implying that it is not wasteful at this level.

In Appendix \ref{ss:gamma-como}, we conduct a similar analysis for testing between two Gamma distributions, using the maximum likelihood e-values introduced in Section \ref{sec:43}.

\subsection{Improved threshold for universal inference}\label{ss:ui-simulation}

We next investigate the power increase we obtain by improving the threshold for universal inference e-tests. As in \cite{WRB20}, we test the null hypothesis ${\rm N}(0, 1)$, while the alternative hypothesis is that data come from a mixture of two Gaussians, that is, $w{\rm N}(\mu_1, \sigma_1) + (1-w){\rm N}(\mu_2, \sigma_2)$ for unknown $(w, \mu_1, \mu_2, \sigma_1, \sigma_2)$.
We construct the e-test using the universal inference e-variable, using 200 observations to estimate the parameters under the alternative and 200 observations to evaluate the likelihood ratio. In Figure \ref{fig:ui-mx-hist}, we present the histogram for 10,000 e-values under the null along with their log-transforms. We observe that the e-values have a decreasing density; it is reasonable to assume that the corresponding e-variable belongs to $\mathcal{E}_{\rm D}$. Also, since the density of positive log-transformed e-values is decreasing, we assume that e-variables belong to $\mathcal{E}_{\rm LD>0}$.
\begin{figure}
    \centering

    \includegraphics[width = \textwidth]{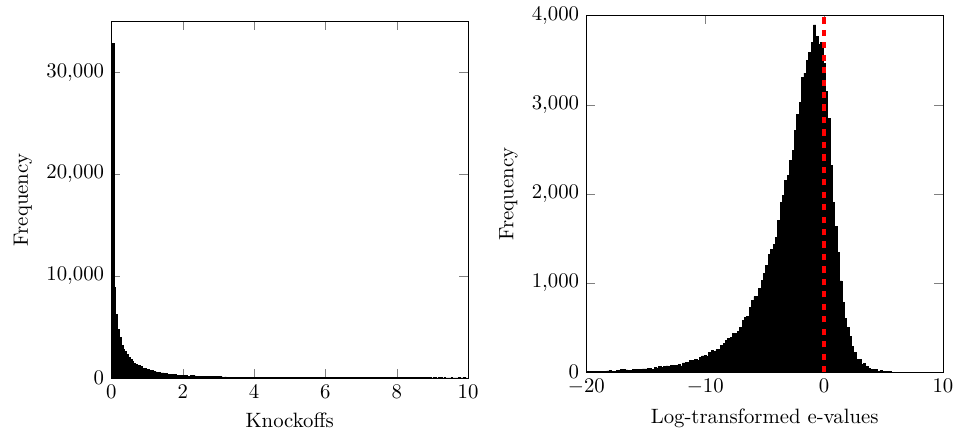}
    

    

    \caption{Histogram of null universal inference e-values and log e-values in the Gaussian mixture example of Section \ref{ss:ui-simulation}.}
    \label{fig:ui-mx-hist}
\end{figure}

In the following, we set the true model to $0.5{\rm N}(-\mu, 1) + 0.5{\rm N}(\mu, 1)$, where $\mu$ is a signal parameter. We control the type-I error at level $\alpha = 0.1$ using the thresholds $T_{0.1}(\mathcal{E}_0) = 10$, $T_{0.1}(\mathcal{E}_{\rm D}) = 5$ and $T_{0.1}(\mathcal{E}_{\rm LD>0}) = 4.07$. We simulate rejection rates from 10,000 replications and present rejection curves in Figure \ref{fig:ui-power-curve}.
\begin{figure}[ht]
	\centering

        \includegraphics{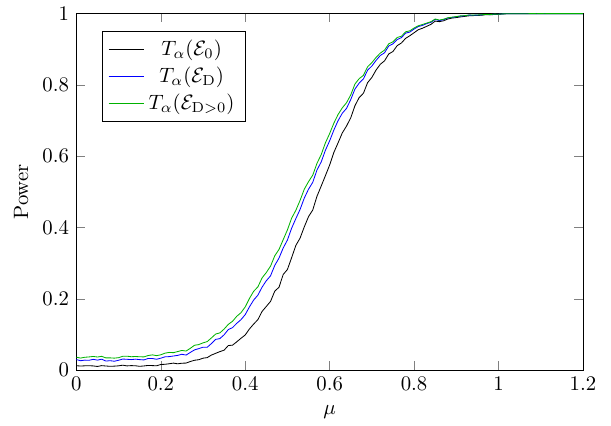}
    
			
   
                
                
	\caption{Power for universal inference e-tests with and without improved thresholds.}\label{fig:ui-power-curve}
\end{figure}

The rejection rate for $\mu = 0$ is 0.0126 using a threshold of $T_{0.1}(\mathcal{E}_{0})$, 0.0299 using a threshold of $T_{0.1}(\mathcal{E}_{\rm D})$, and 0.0365 for a threshold of $T_{0.1}(\mathcal{E}_{\rm LD>0})$. For $\mu = 0.5$, we reject the null hypothesis 30\% more often using the threshold $T_{0.1}(\mathcal{E}_{\rm D})$ compared with $T_{0.1}(\mathcal{E}_{0})$, while the rejection rate is 40\% higher when using the threshold $T_{0.1}(\mathcal{E}_{\rm LD>0})$ compared with $T_{0.1}(\mathcal{E}_{0})$.


In Appendix \ref{app:ui-simulation} we conduct simulations for the alternative hypothesis $ 0.5{\rm N}(\mu_1, 1) + 0.5{\rm N}(\mu_2, 1)$ with unknown $(\mu_1, \mu_2)$, corresponding to the e-values in Figure \ref{fig:ui-e-values} in the Introduction. The power improvement in that simulation study is similar to those discussed in this section.

\subsection{Multiple testing with boosted e-values}

Next, we illustrate the boosting factors of the e-BH procedure in a simple setting. Suppose that each e-variable under the null is $\rm Exp(1)$ distributed such that we can use the boosting factors derived in Section \ref{sec:boosting}. Further, assume that each e-variable follows a gamma distribution if the null hypothesis is not true. We set the non-null distribution's shape parameter to $1 + \Theta$, and its rate parameter to $1/(1 + \Theta)$. To incorporate uncertainty into the alternative e-variable, we model $\Theta$ using an exponential distribution with a mean of $b$ for $b \geq 0$. In other words, the e-variable under the alternative is given by the model
$$(E \vert \Theta = \theta) \lawis {\rm Gamma}(1 + \theta, 1/(1 + \theta)); \quad \Theta \lawis {\rm Exp}(1/b).$$
If $b = 0$ (i.e., $\Theta=0$), then there is no signal. We have $\E[E] = \E[(1 + \Theta)^2] = 1 + 2b + 2b^2$. 

We construct a set of $K$ e-values $(e_1, \dots, e_K)$ by simulating $K_0$ e-values from the null distribution and $K - K_0$ e-values from the alternative, where we set $K = 1000$ and $K_0 = 500$. We also construct $(e_1, \dots, e_K)$ such that its dependence structure is given by a $K$-variate Gaussian copula where off-diagonal entries of the correlation matrix are set to $-1/(K-1)$. For this dependence model, the assumption of PRDS does not hold. 

We compare the base e-BH (Table \ref{tab:e-BH-naive}) and the boosted e-BH with the knowledge that null e-variables are log-concave (Table \ref{tab:e-BH-boosted-LD}) to understand how many more discoveries we can make by using the results from Section \ref{sec:boosting}. Each number represents the average of 1000 replications of the experiment. The base e-BH only makes discoveries for $\alpha = 0.1$ and $b = 5$; even in this case, the number of discoveries is very low. On the other hand, boosting the e-values by $c_1^{\rm AD}(\alpha)$ makes discoveries when the signal or the acceptable FDR bound is high enough. Note that the realized FDP in Table \ref{tab:e-BH-boosted-LD} is still very low: the e-BH has a theoretical FDR guarantee $\alpha K_0/K$ (which is $\alpha/2$ in this example), but we observe a realized FDP much smaller. 
This observation on the conservative nature of the e-BH procedure is well known, as e-BH only has sharp FDR when the e-variables take certain two-point distributions (far from our setting); see \cite{WR22} and \cite{RB24} for discussions. 
For further comparison, simulation results on the case of using the full distribution of null e-variables and the corresponding p-BH procedure are given in Appendix \ref{app:eBH}.

{
	\begin{table}[t]
		\centering
  \caption{Number of discoveries and realized FDP as percentages based on 1000 repetitions.}
    \begin{subtable}[b]{1\textwidth}
		\caption{The base e-BH procedure}\label{tab:e-BH-naive}
		\centering
		\begin{tabular}{rrrrrrr}
			              & \multicolumn{2}{c}{$b = 3$} & \multicolumn{2}{c}{$b = 4$} & \multicolumn{2}{c}{$b = 5$} \\ 				\cmidrule(lr){2-3}\cmidrule(lr){4-5}\cmidrule(lr){6-7} 
			$\alpha$ (\%) & Discov. &          FDP (\%) & Discov. &          FDP (\%) &  Discov. &         FDP (\%) \\ \hline
			            1 &       0 &                 0 &       0 &                 0 &        0 &                0 \\
			            2 &       0 &                 0 &       0 &                 0 &        0 &                0 \\
			            5 &       0 &                 0 &       0 &                 0 &        0 &                0 \\
			           10 &       0 &                 0 &       0 &                 0 & 6.91 &                0\\\hline
		\end{tabular} 
  \end{subtable}
\\\vspace{0.3cm}
  \begin{subtable}[b]{1\textwidth}
		\caption{The boosted e-BH procedure with the knowledge of a log-concave survival function}\label{tab:e-BH-boosted-LD}
		\centering 
		\begin{tabular}{rrrrrrr}
			              & \multicolumn{2}{c}{$b = 3$} & \multicolumn{2}{c}{$b = 4$} & \multicolumn{2}{c}{$b = 5$} \\ 			 \cmidrule(lr){2-3}\cmidrule(lr){4-5}\cmidrule(lr){6-7} 
			$\alpha$ (\%) & Discov. &          FDP (\%) & Discov. &          FDP (\%) & Discov. &          FDP (\%) \\ \hline
			            1 &       0 &                 0 &   65.91 &                 0 &  208.70 &                 0 \\
			            2 &       0 &                 0 &  133.38 &                 0 &  233.66 &                 0 \\
			            5 &    5.55 &                 0 &  196.16 &                 0 &  269.32 &                 0 \\
			           10 &   65.11 &                 0 &  234.48 &                 0 &  294.65 &             0.001 \\ \hline
		\end{tabular}
  
  \end{subtable}
	\end{table}
}

\section{Conclusion}\label{sec:concl}

We have investigated methods to enhance the power of e-tests in situations where we have some knowledge of the distribution of the e-variables. Distributional assumptions such as decreasing or unimodal densities can often be numerically verified or theoretically justified. In most relevant cases, the e-value threshold can be roughly improved by a factor of $2$ or $e$. We also show that when a collection of e-variables is comonotonic, we can take the supremum over that collection and obtain a valid test. This enables us to improve upon split likelihood ratio tests and use the entire dataset to estimate and test. In simulation experiments, the various improvements presented in this paper result in visibly more powerful tests than the standard methods. 

\section*{Acknowledgements}

The authors thank the AE and the two reviewers for their helpful suggestions. The authors acknowledge financial support from the Natural Sciences and Engineering Research Council of Canada (CBW: PDF-578273-2023 and RGPIN-2025-06879; RW: RGPIN-2024-03728 and CRC-2022-00141).


\newpage

\begin{center}
    \Large SUPPLEMENTARY MATERIAL
\end{center}
\begin{appendix}

    \section{Construction of e-values in the introduction}\label{app:constr-e-intro}

    The null knockoffs in Figure \ref{fig:knockoff-e-values} are constructed similarly to the simulation study in Section 4 of \cite{RB24}. To generate one e-value, we use a Gaussian linear model with 100 observations and 80 variables. The number of non-null variables is either 0 or 8. The joint distribution of the feature vector $\boldsymbol{X}$ is Gaussian with mean 0 and covariance matrix $\Sigma$ with entries $\Sigma_{ij} = 0.5^{|i - j|}$. The distribution of $Y \vert \boldsymbol{X}$ is given by a linear model with mean $\boldsymbol{X}^{\rm T} \boldsymbol{\beta}$, where $\boldsymbol{\beta}$ is the vector of regression coefficients. In the case of zero non-nulls, we fix the vector $\boldsymbol{\beta} = \boldsymbol{0}$, a vector with 80 zeroes. When there are eight non-nulls, $\boldsymbol{\beta}$ contains eight values of one, which we distribute uniformly on the vector, with the remaining 72 entries set to zero. We construct knockoffs using the lasso coefﬁcient difference statistic, with a target FDR level of 0.05. We draw 50 copies of knockoffs and report the average. 
    
    We construct the null universal inference e-values from Figure \ref{fig:ui-e-values} similarly to the mixture model example in Section 3 of \cite{WRB20}. The null hypothesis is that data come from $\mathrm{N}(0, 1)$, while the alternative hypothesis is that data come from $0.5{\rm N}(\mu_1, 1) + 0.5 {\rm N}(\mu_2, 1)$, for unknown $(\mu_1, \mu_2)$. We use 400 observations: 200 to estimate the parameters under the alternative and 200 to compute the e-value from the likelihood ratio test statistic. We estimate the parameters from the alternative hypothesis using maximum likelihood estimation with the expectation-maximization algorithm.
    
    \section{Proof of Theorem \ref{th:dec-uni}, parts (ii) and (iii)}
    
    We now prove parts (ii) and (iii) of Theorem \ref{th:dec-uni}. The proof arguments are similar to part (i) but more complicated. 
    
    \begin{enumerate}
        \item[(ii)] Take $E \in \mathcal{E}_{\rm D > 1}$ and $z \ge 1$. Let $f$ be the density function of $E$. 
    We construct an e-variable $E_0$ with density $g_0$ such that 
    (a) $g_0(x) = f(z)$ for $1 \le x < z$;
    (b) $g_0(x) = f(x)$ for $x \ge z$; 
    (c) $E_0$ has a point mass at 0 with probability 
    $1 - \int_1^\infty g_0(x) \d x$. 
    Clearly, 
    $\E[E_0] \le \E[E]$ 
    and 
    $\p(E_0 \ge z) = \p(E \ge z)$. 
    Construct another e-variable $E_1$ with density $g_1$ such that 
    $g_1(x) = g_0(x)$ for $0\le x < z$ and $g_1(x) = f(z)$ for $z \leq x \leq z + \p(E \ge z)/f(z)$. Since $f$ is decreasing for $x > z$, we have 
    $\E[E_1] \le \E[E_0]$ 
    and 
    $\p(E_1\ge z) = \p(E_0 \ge z)$.
    
    Therefore, for any $E \in \mathcal{E}_{\rm D>1}$, there exists an e-variable $E' \in \mathcal{E}_{\rm D>1}$ such that $E'$ has a point mass at 0 and a uniform density on $[1, b]$ for some constant $b$, and $\p(E'\ge z) = \p(E \ge z)$. To show $\p(E \ge z) \leq z - \sqrt{z^2 - 1}$, we can consider the collection of e-variables that have a point mass at 0 and a uniform density on $[1, b]$ for any $b > 1$.
    Moreover, it suffices to consider the case $\E[E]=1$.
    
    Denote by $c$ the constant density of $E'$ over $[1, b]$ and note that $\E[E'] = \int_1^b cx \d x = c(b^2 - 1)/2$. Therefore, $\E[E'] = 1$ implies $c = 2/(b^2 - 1)$. We have $\p(E'\ge z) = 0$ if $z > b$, and otherwise 
    \begin{equation}\label{eq:decreasing-prob1}
        \p(E'\ge z) = \int_z^b c \d x= \int_z^b \frac{2}{b^2 - 1}\d x = 2\frac{b - z}{b^2 - 1}.
    \end{equation}
    Let us now find the constant $b$ maximizing the right-hand side of \eqref{eq:decreasing-prob1}. Taking the derivative with respect to $b$ and setting the result equal to zero, we find that $b$ is the solution to $b^2 - 2zb + 1 = 0$, or $b = z + \sqrt{z^2 - 1}$ (we take the positive solution to ensure $b > z$). Inserting the latter into the right hand side of \eqref{eq:decreasing-prob1}, we have $$\p(E'\ge z) \le \frac{\sqrt{z^2 - 1}}{z^2 + z\sqrt{z^2 - 1} - 1} = z - \sqrt{z^2 - 1},$$
    which, together with the attainability of this distribution, shows     $R_\gamma(\mathcal{E}_{\rm D>1}) = 1/\gamma  - \sqrt{1/\gamma^2 - 1}.$  The last equality is standard algebra.  
        \item[(iii)] We first show $ \p(E\ge 1/\gamma )\le (\gamma /2)\vee (2\gamma -1)$ for each $E\in \mathcal E_{\rm U}$. 
        An alternate proof to a similar result can be found in Theorem 5 of \cite{bernard2023corrigendum}.
    Let $m $ be the largest mode of $E$ and 
    $f$ be the density function of $E$.
    Denote by  $z=1/\gamma$. 
    We consider two cases separately.  
    
     First, consider the case that $z  \le m $.
      In this case, we construct an e-variable $E_0$ with density $g_0$ such that $g_0=\int_0^z f(x) \d x /z$ and $E_0$ has a point mass at $z$ with probability $1-\int_0^z f(x) \d x$. 
      Since $f$ is increasing on $[0,z)$, replacing $f$ with $g_0$ on $[0,z)$ will decrease the mean of the distribution. Moreover, moving all probability mass of $E$ on $[z,\infty)$ to the point $z$ will decrease the mean. Hence, $\E[E_0]\le \E[E]$. 
      Note that $\p(E_0\ge z)=\p(E\ge z)$, and $E_0\in \mathcal E_{\rm U}$.
    
    Second, consider the case that $z > m $.
    In this case, 
    consider another e-variable $E_1$ with density function $g_1$ such that
    (a) $g_1(x)=f(x)$ for $x\ge z$;
    (b) $g_1(x)= f(z)=:a $ for $x\in (m ,z]$;
    (c) $g_1(x) =  \int_0^m f(x)/m$ for $x\in [0,m)$; 
    (d) $E_1$ has a point mass  at $m$ with probability $p:=1-\int_0^\infty g_1(x) \d x$.  
    Clearly, $\p(E\ge z)=\p(E_1\ge z)$. We will verify that $\E[E_1]\le \E[E ] \le 1$.
    This is consistent with the argument presented above. 
    Note that 
    on  $x\in [0,m)$, replacing the density $f$ with the constant density $g_1$ will decrease the mean,
    since $f$ is an increasing function on $[0,m)$. 
    Moreover, on $[m,z)$, replacing $f$ with the point mass and $g_1$ will also decrease the mean. 
    Putting both observations together, we have $\E [E_1]\le \E[E]$.
    Further, $E_1$ also has a unimodal density.
    
    Next, construct e-variable $E_2$ with density function $g_2$ such that 
      (a) $g_2(x)=g_1(x)$ for $x> m$;
    (b) $g_2(x)= g_1(x) +p/m=:b $ for $x\in [0,m)$.
    In other words, the point mass at $m$ of $E_1$ is replaced by a uniform density on $[0,m)$. 
    Certainly, $\E[E_2]\le \E[E_1]\le1$. 
    By construction, $\p(E_1\ge z)=\p(E_2\ge z)$. 
    
    If either $m=0$ or $b\ge a$, then $E_2$ has a decreasing density. 
    Hence, using part (i) of Theorem \ref{th:dec-uni}, we have $\p(E_2\ge z) \le 1/(2z)$. 
    This gives   $\p(E\ge z)= \p(E_2\ge z)\le1/(2z)$.
    
    Next, assume $b<a$. In this case, construct e-variable $E_3$ with density function $g_3$ such that 
      (a) $g_3(x)=g_2(x)$ for $x\ge z$;
    (b) $g_3(x)=   \int_0^{z} g_2(x) /z =:d $ for $x\in [0,z)$.
    Using the same argument above, we have $\E[E_3]\le \E[E_2]$, and $E_3$ has a unimodal density with mode at $z$. By construction, $\p(E_2\ge z)=\p(E_3\ge z)$. 
    
    Construct e-variable $E_4$ with density function $g_4$ such that $g_4(x)=g_3(x)$ for $x\in [0,z)$ and $E_4$ has a point mass  at $z$ with probability $ \int_{z}^\infty g_3(x)\d x$.
    We have $\E[E_4]\le \E[E_3]$ and $E_4\in \mathcal E_{\rm U}$ and by construction, $\p(E_3\ge z)=\p(E_4\ge z) $.

    The above arguments in two cases together show that for any $E\in \mathcal E_{\rm U}$, either  $\p(E\ge z) \le1/(2z)$ 
    or there exists $E'\in \mathcal E_{\rm U}$ such that $E'$ has a uniform density on $[0,z)$ and a point mass at $z$, and $\p(E'\ge z)=\p(E\ge z)$. This $E'$ is $E_0$ in the case  $z\le m$ and $E_4$ in the case $z>m$.
     It suffices to analyze $q:=\p(E'\ge z)$. Note that $\E[E']=zq + (1-q)z/2\le 1$, giving rise to $q \le 2/z-1$. Hence, in this case, $\p(E\ge 1/\gamma) =\p(E'\ge 1/\gamma) \le 2\gamma -1$.
    Putting the above two cases together we have $ \p(E\ge 1/\gamma ) \le (\gamma/2) \vee (2\gamma -1). $
    
    To show the converse statement $R_{\gamma}(\mathcal E_{\rm U}) \ge (\gamma/2) \vee (2\gamma -1)$,
    it suffices to verify the two inequalities separately, namely, 
     $R_{\gamma}(\mathcal E_{\rm U}) \ge \gamma/2 $ for $\gamma \in (0,2/3]$ and
       $R_{\gamma}(\mathcal E_{\rm U}) \ge 2\gamma -1 $ for $\gamma \in (2/3,1]$.
       The first inequality follows from part (i) of Theorem \ref{th:dec-uni}, which gives
    $R_{\gamma}(\mathcal E_{\rm U}) \ge R_{\gamma}(\mathcal E_{\rm D}) =  \gamma/2 $ for $\gamma \in (0,1)$. 
    To show the second inequality, for $z\in [1,2]$,  
    construct $E$ with a uniform density on $[0,z)$ and a point mass at $z$ and with probability $q:=2/z-1$, we get $\E[E]=zq + (1-q)z/2 = 1$. Hence, $E\in \mathcal E_{\rm U}$, and by choosing $ \gamma=1/z\in [1/2,1]$, we get $\p(E\ge 1/\gamma) = 2\gamma -1$, showing the second inequality. 
    \end{enumerate}

\section{Additional results for Section 3}\label{sup:moment-constraints}

In the main part of the paper, we have mainly focused on improving thresholds based on shape constraints of the density function of the e-variables. If the statistician has other information, such as moment constraints on e-variables, they may incorporate this information to improve the threshold. In this appendix, we provide some improved thresholds when including variance information. 

Define the set of e-variables with a bounded variance: 
\begin{align*}
\mathcal E_{\sigma^2}&=\{E\in\mathcal E_0: \mathrm{var}(E) \leq \sigma^2\}.
\end{align*}
The following theorem provides improved thresholds for e-variables with variance constraints.

\begin{theorem}\label{thm:improved-threshold-variance}
  For $\gamma \in (0, 1]$, we have
    $$R_\gamma(\mathcal{E}_{\sigma^2}) = \left(\frac{\sigma^2}{\sigma^2 + (1/\gamma - 1)^2}\right) \wedge \gamma.$$
\end{theorem}

\begin{proof}
  We first show that, for every $x > 1$,
  
  \begin{equation}\label{eq:variance-threshold}
    \sup_{\substack{X \geq 0, \, \E[X]\leq 1, \\ \mathrm{var}(X) \leq \sigma^2}}
  \Pr(X\ge x)
  =\min \left\{\frac 1 x, \, \frac{\sigma^2}{\sigma^2+(x-1)^2}\right\}.
  \end{equation}
  The expression in \eqref{eq:variance-threshold} holds as an upper bound: the first part of the bound is due to Markov's inequality, while the second is due to the Cantelli inequality (also called the one-sided Chebyshev inequality). For $t > 0$, the one-sided Cantelli bound is given by 
  $$\p(X - \mu \geq t) \leq \frac{\sigma^2}{\sigma^2 + t^2}.$$
  Setting $t = x - \mu$ and noting that $x - \mu \geq x - 1$, we have for $x > 1$ that
  $$\p(X \geq x) = \p(X - \mu \geq x- \mu) \leq \frac{\sigma^2}{\sigma^2 + (x - \mu)^2} \leq \frac{\sigma^2}{\sigma^2 + (x - 1)^2}.$$

  We now show that \eqref{eq:variance-threshold} holds as a lower bound. 
  \begin{itemize}
    \item Case 1: If $\sigma^2 + 1 > x$, \eqref{eq:variance-threshold} is attained through Markov's inequality. That is, at the two-point distribution with $\p(X = 0) = 1-1/x$ and $\p(X = x) = 1/x$. In that case, $\mathrm{var}(X) = x^2/x - 1 = x - 1$ and the variance constraint does not factor into the bound.
    \item Case 2: If $\sigma^2 + 1 \leq x$, consider a two-point distribution with 
    $$\p(X = x) = p : =\frac{\sigma^2}{\sigma^2 + (x - 1)^2}$$
    and
    $$\p\left(X = \frac{1 - px}{1-p}\right) = 1 - p.$$  Then, we have $E[X] = 1$ and $\mathrm{var}(X) = \sigma^2$
    and $\p(X \geq x) = \sigma^2/(\sigma^2 + (x - 1)^2)$. Note that $(1 - px)/(1-p)\geq 0$ whenever $\sigma^2 \leq x-1$, so for case 2 the resulting distribution is a positive random variable. 
  \end{itemize}
\end{proof}

\begin{remark}\label{rem:unimodal-variance}
We can also incorporate shape constraints along with variance constraints. Define 
    \begin{align*}
        \mathcal E_{\rm U, \sigma^2}&=\{E\in\mathcal E_0: \mbox{$E$ has a unimodal density on $[0, \infty)$ and } \mathrm{var}(E) \leq \sigma^2\},
    \end{align*}
    noting that $\mathcal E_{\rm U, \sigma^2} = \mathcal E_{\rm U} \cap \mathcal E_{\sigma^2}$. We can derive the worst-case bound $R_{\gamma}(\mathcal E_{\rm U, \sigma^2})$ using Theorem 3 of \cite{bernard2023corrigendum}; we omit its expression since it is lengthy and tedious to write out in full.  

\end{remark}

In Figure \ref{fig:threshold-comparison-variance}, we plot the worst-case type-I error bounds $R_\gamma(\mathcal E)$ and the associated rejection thresholds $T_\alpha(\mathcal E)$ for the classes of e-variables subject to a variance bound $\sigma^2$, both with and without the additional assumption of unimodality. For comparison, we overlay the corresponding curves for the unrestricted class $\mathcal E_0$ and the unimodal class $\mathcal E_{\rm U}$.  We see that, for small $\gamma$, enforcing a variance constraint yields tighter thresholds than the Markov bound; as $\gamma$ grows, these ``variance-only'' thresholds approach those of $\mathcal E_0$, coinciding exactly once $\gamma>1/(1+\sigma^2)$.  Likewise, the thresholds for the variance-and-unimodal class $\mathcal E_{\rm U,\sigma^2}$ match those of $\mathcal E_{\rm U}$ for larger $\gamma$.

\begin{figure}[ht]
    \centering

    \includegraphics[width = \textwidth]{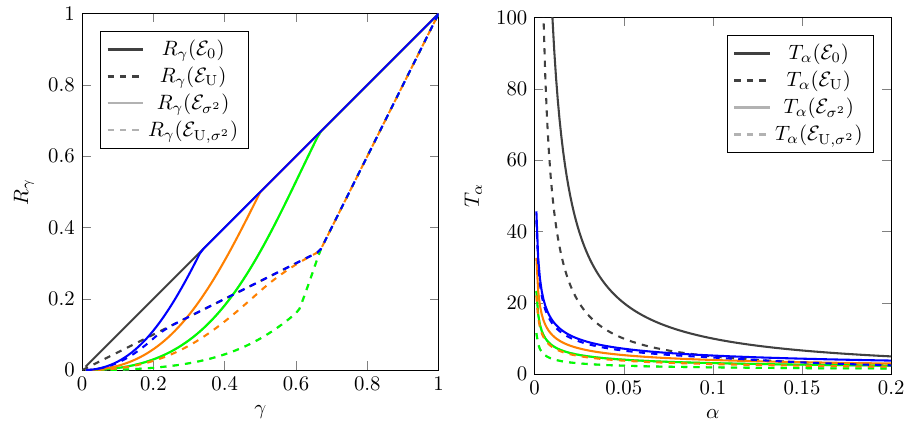}

\caption{Comparison of worst-case type-I errors and improved thresholds for 
variance constraints (full lines without unimodality constraints, dashed lines with unimodality constraints). Green: $\sigma^2 \leq 0.5$, orange: $\sigma^2 \leq 1$, and blue: $\sigma^2 \leq 2$. 
}   \label{fig:threshold-comparison-variance}
\end{figure}

  
  

The analysis from this appendix can be further extended by considering additional moments. In particular, Theorem 3.2 of \cite{bertsimas2005optimala} provides a method to obtain tight bounds on the tail probability of non-negative random variables with moment conditions.

    \section{Proof of Theorem \ref{th:log-transformed}}\label{app:proof-log-transformed}
    
    In this section, we provide proofs for the five results in Theorem \ref{th:log-transformed}.
    
    \begin{enumerate}
        \item[(i)] Let $x\ge \log 2$,
        $a>0$ and $p<e^{-x}\le 1/2$.
        Consider a three point distribution at $x$ with probability $p$, at $x-a$ with probability $1-2p$
        and at $x-2a$ with probability $p$.
        Clearly, this distribution is symmetric, and we let $X$ follow this distribution.
        We can compute 
        $\E[e^X] = p e^{x} + p e^{x-2a} + (1-2p)e^{x-a}$.
        Sending $a\to \infty$ gives 
        $\E[e^X]\to p e^{x}<1 $. Therefore, $e^X\in \mathcal E_{\rm LS}$ for sufficiently large $a$.
        Note that $\p(e^X\ge e^x) = p$.
        Since $p$ is an arbitrary number smaller than $e^{-x}$, we get $R_{\gamma} (\mathcal E_{\rm LS}) \ge  \gamma$ for all $\gamma \le 1/2$, and it holds as an equality due to Markov's inequality. 
        Next, suppose that $X$ has any symmetric distribution and $\E[e^X]\le 1$.
        Jensen's inequality gives 
        $\E[e^X]\ge e^{\E[X]}$.
        Hence, $\E[X]\le 0$. 
        This implies $\p(e^X>1)=\p(X>0)\le 1/2$. This gives 
        $R_\gamma (\mathcal E_{\rm LS}   ) =\gamma \wedge (1/2)$. The last statement $R_1(\mathcal E_{\rm LS}  ) =1$ is easily verified by taking $X=0$.
        \item[(ii)] Let $m>0$ be large, $z=-\log \gamma$ and $p\in (0,\gamma)$, and let $X$ follow the distribution given by combining a uniform distribution on $[z-m,z]$ with density $p/m$
        and a point mass at $z$ of probability $1-p$.
        This distribution is unimodal, and $\E[e^X]$ can be easily computed as 
        $$
        \E[e^X] = (1-p) e^z + \frac p m  \int_{z-m}^z e^x \d x
        = (1-p)\frac 1{\gamma} + 
        \frac pm (e^{z}- e^{z-m}).
        $$
        It is clear that for $\E[e^X]\le 1 $ by sending $m\to\infty$ $\E[e^X]<1$.
        Note that
        $\p(e^X\ge 1/\gamma) = p$ and $p$ can be any number smaller than $\gamma$. This shows 
        $R_\gamma (\mathcal E_{\rm LU}   ) \ge \gamma$. The other direction $R_\gamma (\mathcal E_{\rm LU}   ) \le \gamma$ is Markov's inequality. 
        \item[(iii)] Let $E \in \mathcal E_{\rm LD>0}$ and $X= \log E$. Let $g$ denote the density function of $X$ and $f$ denote the density function of $E$. We have that
        $f(x) = g(\log x) / x$ for $x \ge 0$. Note that if $E \in \mathcal{E}_{\rm LD>0}$, then $g'(x) < 0$ for $x > 0$. Further, for $x > 0$, we have that
        $$f'(x) = g'(\log x)/x^2 - g(\log x)/x^2 < 0,$$
        implying $\mathcal{E}_{\rm LD>0} \subseteq \mathcal{E}_{\rm D>1}$, which proves the upper bound. 
        
        
        Suppose we have a random variable $Y$ with distribution function $H$. If we construct a new random variable $Y'$ with distribution function $H'$ by shifting some mass to the left, then $\p(Y' \le x) \ge \p(Y \le x)$, so $Y$ dominates $Y'$ under the usual stochastic order and $\E[\phi(Y')] \le \E[\phi(Y)]$ for any increasing function $\phi$.
        
        Let us now show that $R_\gamma(\mathcal E _{\rm LD>0}) \le e^{-a_\gamma}$. Start with a random variable $X\in \mathcal{E}_{\rm LD>0}$ with density function $g$ and let $z = 1 / \gamma$. Let $m \leq \inf({\rm supp}(X))$, that is, $m$ is smaller than the left-hand support of $X$ (if $\inf({\rm supp}(X)) = -\infty$, then we mean that $m$ approaches $-\infty$). Construct a new random variable $X_0$ with density function $g_0$ such that 
        (a) $g_0(x) = g(x)$ for $x > \log z$;
        (b) $g_0(x) = g(\log z)$ for $0 \le x \le \log z$;
        (c) $X_0$ has a point mass at $m$. Then $\p(X_0 \ge \log z) = \p(X \ge \log z)$ and $X_0$ is smaller than $X$ under the usual stochastic order since we shifted mass to the left.
        
        Construct another random variable $X_1$ with density function $g_1$ such that
        (a) $g_1(x) = g_0(x)$ for $x \le \log z$;
        (b) $g_1(x) = g(\log z)$ for $\log z \le x \le \log z + \p(X \ge \log z ) / g(\log z)$. Then $\p(X_1 \ge \log z) = \p(X_0 \ge \log z)$ and $X_1$ is smaller than $X_0$ under the usual stochastic order since we shifted mass to the left.
        
        This shows that if $E \in \mathcal{E}_{\rm LD>0}$, then we can construct a random variable $E' \in \mathcal{E}_{\rm LD>0}$  such that $\log E'$ has a point mass $m$ approaching $-\infty$ and a constant density over $[0, a]$ for some $a \ge \log z$, and $\log E'$ is smaller than $\log E$ under the usual stochastic order, implying that $\E[E'] \le \E[E]$. 
        
        Denote by $c$ the constant density of $\log E'$ over $[0, a]$ and $p = \p(\log E' = m)$. 
        We have that 
        $\E[E'] = \E[e^{\log E'}] = p e^m+c(e^a - 1)$. Since $E'$ is an e-variable and since $m$ approaches $-\infty$, we have $c \le 1/(e^a - 1)$. 
        This implies that, for any $E \in \mathcal{E}_{\rm LD>0}$, 
        \begin{equation}\label{eq:pe-ld0}
            \p(E \ge z) = \int_{\log z}^{a}c \d x \le \int_{\log z}^{a}1/(e^a - 1) \d x = \frac{a - \log z}{e^a - 1}.
        \end{equation}
        We obtain the supremum of the right-hand side of \eqref{eq:pe-ld0} over $a > \log z$, which is the solution to the equation $e^a(1 - a + \log z) = 1$.  
     Alternatively, using the substitution $t = e^a/z$, we seek the solution for $t$ in $t(1 - \log t) = 1/z$ 
     for $t > 1$, that we denote by $t_{1/z}$. 
     Note that $$\frac{\d}{\d t} (t(1-\log t)) = -\log t,$$ implying that $t \mapsto t(1 - \log t)$ is monotonically decreasing and $t_{1/z}$ has a unique solution for $t > 1$. 
     It is easy to verify that $e$ is the solution to $t(1 - \log t) = 1/z$ when $1/z \downarrow 0$, hence $t_{1/z} \in (1, e)$. Replacing $t_{1/z}$ into \eqref{eq:pe-ld0} and simplifying yields
        $\p(E \ge z) \le 1/(z t_{1/z})$.

        To show the upper bound holds with equality, fix some $z > 1$. We construct an e-variable $E$ such that $\log E$ has a constant density of $1/(t_{1/z}z - 1)$ over $[0, \log (t_{1/z}z)]$, where $t_{1/z}$ satisfies $t_{1/z}(1 - \log t) = 1/z$, and the remaining mass approaching $-\infty$. Then, $\p(E \ge z) = e^{-a_{1/z}}$.
        
        To show the lower bound, we note that $R_\gamma(\mathcal{E}_{\rm LD>0})$ maximizes $(a + \log \gamma)/(e^a - 1)$, so any $a \in (-\log \gamma, \infty)$ will give a lower bound for $R_\gamma(\mathcal{E}_{\rm LD>0})$. We start with the observation that $\gamma \mapsto a_\gamma$ is decreasing, so $e^{a_\gamma}$ is large for small $\gamma$, implying that $1 - a_\gamma - \log \gamma$ is a small positive number. We approximate $a_\gamma$ by the solution for $a$ in $1 - a - \log \gamma = 0$, which leads to $a = 1 - \log \gamma$. Replacing this in the right-hand side of \eqref{eq:pe-ld0} leads to the lower bound. 
        \item[(iv)] Let $m\ge 1$ and $c>0$. Consider the  uniform distribution $F_{m,c}$  on $[m-1/c,m]$, and let $X$ follow this distribution.
        Clearly, $F_{m,c}$  is a unimodal and symmetric distribution,
        and it has a constant, thus decreasing, density.  We can   compute
        $$\E[e^X] = \int_{m-1/c}^m c e^x \d x = c  e^m (1 -e^{-1/c}) \le ce^m.$$
        By taking $c=e^{-m}$, we know that $e^X\in \mathcal E_{\rm LUS}\cap  \mathcal E_{\rm LD}$. 
        For $x\ge 0$, we have 
        $\p(e^X\ge e^x)  = c (m-x) =e^{-m} (m-x) $.
        By taking $m=x+1$ we get
        $$\p(e^X\ge e^x) = e^{-x-1}.$$
        Since  $e^X\in \mathcal E_{\rm LUS}$, we get 
        $  R_\gamma (\mathcal E_{\rm LUS}   )\ge \gamma/e$
        and 
        $  R_\gamma (\mathcal E_{\rm LD}   )\ge \gamma/e.$
        
        Next, we show the upper bound.
        We will first argue that 
        both $  \mathcal E_{\rm LUS}$
        and 
        $\mathcal E_{\rm LD}$
        admit the same class of worst-case distributions.  
        Fix $x> 0$. For any $E\in \mathcal E_{\rm LUS}$ or $E\in \mathcal E_{\rm LD}$,
        let $X=\log E$. Denote the distribution of $X$ by $F$ and its 
        density function by $g$.
        
        First, we argue the case of $ \mathcal E_{\rm LUS}$. 
        Denote by $x_0$ the median of $F$, which is unique.  We   consider another distribution $G$,  such that $G$ has a point mass at $x_0$ and a uniform density on 
        $[x_0-a, x_0+a]$ for some $a>0$.
        Let  $g$ be the density function of $G$ outside $\{x_0\}$. 
        Let $Y$ follow the distribution $G$. 
        We choose $a$ and the point mass at $x_0$  such that $f(x)=g(x)$ and  $F(x)=G(x)$.  It is clear that such $G$ exists, and it is straightforward to check that $G$ is dominated by $F$ in increasing convex order (see e.g., Theorem 3.A.44 of \cite{SS07}), that is, $$\int_\R u \d F \ge \int_\R u \d G \mbox{~~~~ for all increasing convex functions $u:\R\to \R$}.$$  
        It follows that $\E[e^Y]\le \E[e^X]\le 1$, and thus $e^Y\in \mathcal E_{\rm LUS}$. Moreover, $\p(Y\ge x)=\p(X\ge x)$. 
        Therefore, it suffices to consider distributions of the form $G$.
        Keeping the density $g$ at $x$ fixed, we can increase $a$ and remove the point mass at $x_0$.
        This will reduce the mean of $Y$ and does not change $\p(Y\ge x)$. Therefore, it suffices to consider uniform distribution on some interval $[m-1/c,m]$, thus the distribution $F_{m,c}$ above.
        
        Next, consider the case of $\mathcal E_{\rm LD}$, which is simpler. 
        Let 
        $Y$ follow the distribution $G=F_{m,c}  $ with density $g$, such that
        $G(x) = F(x) $ and 
        $g(x)=f(x)$.
        It is straightforward to check that $F$ dominates $G$ in increasing convex order, and  $e^Y\in \mathcal E_{\rm LD}$. 
        Hence, we, again, only need to consider $F_{m,c} $ above. Moreover, $ \mathcal E_{\rm LUS}=\mathcal E_{\rm LD}$. 
        
        For $Y\lawis F_{m,c}$, note that
        $\p(Y>x) = c(m-x)_+$. To find the largest value of this probability, it is clear that $m>x$. 
        Since the median of $Y$ is less than or equal to $0$, as we argued above,
        we know $1/c\ge 2m \ge 2x$. 
        Therefore, by using $c  e^m (1 -e^{-1/c}) \le 1$, we get $c e^m (1-e^{-2x})\le 1$, and thus $c \le e^{-m} (1-e^{-2x})^{-1}$. 
        This leads to 
        $$
        \p(Y>x) = c(m-x)_+ \le \sup_{m\ge x} e^{-m} (1-e^{-2x})^{-1} ( m-x)  = \frac{ e^{-x-1}} {1-e^{-2x}}.
        $$
        This shows 
        $$
        R_{\gamma}(\mathcal E_{\rm LUS}) \le \frac{\gamma /e }{1-\gamma^2}.
        $$
        The remaining inequality  $R_{\gamma}(\mathcal E_{\rm LD}) \le R_\gamma(\mathcal{E}_{\rm LD})$ follows from $\mathcal E_{\rm LD} \subseteq \mathcal E_{\rm LD>0}\subseteq \mathcal E_{\rm D>1}$.  
        \item[(v)] Clearly, if $\gamma=1$, then $R_1(\mathcal E_{\rm LN})=1$ by choosing the trivial degenerate e-variable $E=1$. 
        Next,
        let $\sigma > 0$ and $X_\sigma$ follow $\mathrm{LN}(-\sigma^2/2 ,\sigma^2)$, which has mean $1$. 
        For $x>1$, we have
        \begin{align*}\sup_{\sigma> 0}\p(X_\sigma\ge x) &= \sup_{\sigma> 0} \Phi\left(\frac{-\log x-\sigma^2/2}{\sigma}\right) \\ &= \Phi\left(\sup_{\sigma> 0}\left( -\frac{\log x}{\sigma} -\frac{\sigma}{2}\right)\right)    =\Phi\left(-\sqrt{2 \log x}\right).
        \end{align*}
        This shows $R_\gamma (\mathcal E_{\rm LN}   ) =\Phi(-\sqrt{-2\log\gamma})$. 
    
    We can provide an explicit upper bound by using the inequality, $$
    \left(\frac{1}{x}-\frac1{x^3} \right)\phi(x)< 
    \Phi(-x) < \frac{1}{x} \phi(x),$$ for $x > 0$, where $\phi$ is the standard normal pdf (see, e.g., \cite{gordon1941values}). It follows from 
    part (v) of Theorem \ref{th:log-transformed} that for $\gamma<1$,
    $$R_\gamma (\mathcal E_{\rm LN}   ) =\Phi(-\sqrt{-2\log\gamma})\leq  \frac{\gamma}{2\sqrt{-\pi \log \gamma}}.$$
    
    \end{enumerate}
    
    \section{Proof of Theorem \ref{thm:log-distribution}}\label{app:proof-log-distribution}
    
    In this section, we provide proofs for the three results in Theorem \ref{thm:log-distribution}. 
    
    \begin{enumerate}
        \item[(i)] Let $E\in \mathcal E_{\rm LCS}$
        and denote by $G$ its survival function.
        Note that log-concavity implies for   $x,y\ge 0$ and $\theta \in [0,1]$,
        $$G(\theta x+(1-\theta )y)\geq G(x)^{\theta }G(y)^{1-\theta }.
        $$
        Moreover, log-concave survival functions cannot jump at $0$, hence $G(0)=0$.
        Taking $y=0$, $x>1$ and $\theta = z/x$ with $z\in (0,x]$,  we get
        $$
        G(z) \ge G(x)^{z/x}. 
        $$
        For fixed $x>1$, denote by $t_x =(\log G(x))/x$, which is negative by Markov's inequality. We have
        \begin{align*}
        \E[E] = \int_0 ^\infty  G(z)\d z
        \ge \int_0^x G(x)^{z/x} \d z
        = \int_0^x e^{ t_x z} \d z 
        = \frac{e^{x t_x } -1}{t_x} \le 1.
        \end{align*}
        Therefore,
        $$
        e^{x t_x}\ge t_x+1.
        $$
        Note that the above inequality always holds if $t_x\le -1$.
        Below, we consider the case $t_x>-1$. 
        We note that 
        $$
        e^{x t} -  t -1=0.
        $$ 
        has a unique solution for $t \in (-1,0)$. 
        Note that $e^{xt} - t-1> 0$ for $t=-1$
        and $e^{xt} - t-1<0$ for $t$ close to $0$. 
        Strict convexity guarantees a unique solution on $ (-1,0)$, denoted by $t_x^*$. 
        The above argument also shows $t_x\le t_x^*$,
        and hence 
        $G(x) =  e^{x t_x} \le e^{xt_x^*}$ for all $x> 1$.
        Taking $\gamma=1/x$ and $s_\gamma=t^*_x$  proves 
        $R_{\gamma}(\mathcal E_{\rm LCS} )
        \le e^{s_\gamma/\gamma}$.
        
        To show that the above inequality holds as an equality, let
        $
        G(z) = e^{t_x^* z} 
        $
        for $z< x$, and $G(z)=0$ for $z\in [0,x]$.
        It is clear that $G$ is a log-concave survival function, and the corresponding random variable $E$ has mean $1$,
        and thus $E\in \mathcal E_{\rm LCS}$. 
        For any $z\in [0,x)$,
        we have 
        $\p(E>z)  = e^{t_x^* z} $. 
        By continuity, with $\gamma=1/x$, we get 
        $R_{\gamma} (  \mathcal E_{\rm LCS}) \ge  e^{t_x^* x} = e^{s_\gamma/\gamma} $.  
        Thus, the first equality in the proposition holds.  
        
        To show the remaining inequality, by using 
         $$
        e^{-x t_x} (t_x+1) \le 1,
        $$
        and $t_x\in (-1,0)$,  we get 
         $(1 - xt_x)(t_x+1) \le 1,$ 
        which is
        $ 
        -x t_x^2 +t_x -x t_x \le 0,
        $ 
        leading to 
        $ 
        -x t_x \ge x-1.
        $ 
        Thus
        $ 
        t_x \le  (1-x)/x, 
        $ 
        yielding $G(x) =  e^{x t_x} \le e^{1-x}$ for all $x> 1$.
        Taking $\gamma=1/x$ proves the desired inequality. 
        \item[(ii)] The first inequality follows by checking $E$ following an exponential distribution with mean $1$.
        The relations $\mathcal{E}_{\rm LCD} \subseteq \mathcal{E}_{\rm U}$
        and 
        $\mathcal{E}_{\rm LCD} \subseteq \mathcal{E}_{\rm LCS}$  give the second inequality. 
        The last inequality is shown in part (i) of Theorem \ref{thm:log-distribution}.
        \item[(iii)] Let $m>1$, $b\in (0,1)$ and $\lambda >0$. 
        Consider the cumulative distribution function $F$ given by 
        $$F(x) = (1-b)+b e^{-\lambda (m-x)_+} \mbox{ for $x\ge 0$}.$$
        It is straightforward to verify that the distribution function $F$ is log-concave,
        and its survival function is given by $x \mapsto b (1 - e ^{-\lambda (m-x) })$ on $ [0,m]$. 
        We further choose the parameters so that $F$ has mean $1$, that is, 
        $$
        1=\int_0^m b (1- e ^{-\lambda (m-x) }) \d x = b \left( m  - \frac{1-e^{-\lambda m}}{\lambda }\right),
        $$
        and this is equivalent to $b= \lambda /(m\lambda -1+e^{-\lambda m})$.  For any fixed $m>1$, if we send $\lambda \to \infty$, then $b\to 1/m\in (0,1)$. 
        Therefore, for any fixed $m$ we can take $\lambda $ large enough so that $F$ is the distribution function of some $E\in \mathcal E_{\rm LCF}$. 
        Hence, for $x\in [0,m),$
        $$\p(E>x) =  \frac{\lambda (1 - e ^{-\lambda (m-x) }) }{ m\lambda -1+e^{-\lambda m}}.$$
        Sending  $\lambda \to \infty$ we get
        $\p(E>x) \to 1/m$. 
        This implies that, for any $x \in [1,m)$, we have 
        $R_{1/x}(\mathcal E_{\rm LCF} ) \ge 1/m$.
        Since $m$ is arbitrary, this leads to 
        $R_{\gamma}(\mathcal E_{\rm LCF} ) \ge \gamma$ for $\gamma \in (0,1]$. Together with Markov's inequality, implying $R_{\gamma}(\mathcal E_{\rm LCF} ) \le \gamma$, we get the desired statement.
    \end{enumerate}

\section{Thresholds for combined e-values}\label{app:combined}

In Section \ref{sec:combined}, we investigated improved thresholds for stopped e-processes. In this section, we will investigate the related problem of finding improved thresholds in situations where multiple e-values are combined in a fixed-sample setting. 

\subsection{Thresholds for products of e-values}

We will now investigate a method to improve the threshold of products of independent or sequential e-variables in a fixed sample size setting. We first define the multiplicative strong unimodality (MSU) property proposed by \cite{cuculescu1998multiplicative}. A random variable $X$ is said to have the MSU property if, for any random variable $Y$ with a unimodal density function, the random variable $XY$ has a unimodal density function. Denote by
$$\mathcal E_{\rm MSU} = \{E\in\mathcal E_0: \mbox{$E$ is multiplicative strongly unimodal over $[0, \infty)$}\}.$$ Proposition 3.6 of \cite{cuculescu1998multiplicative} states that if $X$ is a unimodal random variable with a mode at zero, then the product of $X$ with any other random variable defined on $\mathbb{R}$ will also have a mode at zero; it follows that any e-variable with a decreasing density on $[0, \infty)$ will have the MSU property. Note that this does not coincide with the class $\mathcal{E}_{\rm D}$ since we require the mode to be zero, while the left end-point of supports for e-variables in $\mathcal{E}_{\rm D}$ could be larger than 0. Theorem 3.7 of \cite{cuculescu1998multiplicative} provides the condition for which e-variables will have the MSU property. Letting $f$ denote the density function of a random variable in $\mathcal E_{\rm MSU}$, then 
\begin{itemize}
    \item[(i)] $f$ has a decreasing density on $[0, \infty)$, implying it is unimodal with a mode at zero or
    \item[(ii)] $x \mapsto f(e^x)$ is log-concave on $[0, \infty)$.
\end{itemize}
Distributions that are log-concave with respect to $x \mapsto f(e^x)$ include beta, Cauchy, Fisher, gamma, inverse Gaussian, logistic, log-normal, Gaussian, Student, Pareto distributions, along with their location-scale transforms; see \cite{alimohammadi2016convolutions} for more examples. 

The following is immediately apparent.  
\begin{proposition}\label{prop:e-process}
    Let $S_T=\prod_{t=1}^T E_t$. 
    \begin{enumerate}
        \item[(i)] Let $E_t$ be sequential e-values for $t \in [T - 1]$ and let $E_T$ be independent of $S_{T-1}$ and have a decreasing density on $[0, \infty)$. Then, $S_T \in \mathcal{E}_{\rm D}$. 
        \item[(ii)] Let $E_t \in \mathcal{E}_{\rm MSU}$ for $t \in [T]$ be independent e-variables. Then, $S_T \in \mathcal{E}_{\rm U}$. 
    \end{enumerate}
\end{proposition}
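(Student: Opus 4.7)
The plan is to handle the two parts separately, using independence or sequentiality to control the expectation and a direct density calculation or an induction to obtain the shape property.

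For part (i), I would first verify that $S_T$ is an e-variable. Iterated conditioning using the sequential property gives $\E[S_{T-1}]\le 1$, and independence of $E_T$ from $S_{T-1}$ yields $\E[S_T]=\E[S_{T-1}]\,\E[E_T]\le 1$. Next, I would derive the density of $S_T$ on $(0,\infty)$ by conditioning on $S_{T-1}$ and using independence:
\[
f_{S_T}(x)=\E\!\left[\frac{1}{S_{T-1}}\,f_{E_T}\!\left(\frac{x}{S_{T-1}}\right)\id_{\{S_{T-1}>0\}}\right],\quad x>0,
\]
with any residual probability $\p(S_{T-1}=0)$ placing a point mass of $S_T$ at $0$. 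Since $f_{E_T}$ is decreasing on $[0,\infty)$ and $S_{T-1}>0$ inside the expectation, the integrand is pointwise decreasing in $x$; monotonicity passes through expectation, so $f_{S_T}$ is decreasing on $(0,\infty)$. A possible atom at $0$ sits at the left end-point of the support, which is permitted by the paper's convention for $\mathcal{E}_{\rm D}$, so $S_T\in\mathcal{E}_{\rm D}$.

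For part (ii), independence gives $\E[S_T]=\prod_{t=1}^{T}\E[E_t]\le 1$, so $S_T$ is an e-variable. The unimodality claim I would prove by induction on $T$. For the base case $T=1$, the variable $S_1=E_1\in\mathcal{E}_{\rm MSU}$ is itself unimodal: under the characterization's case (i) the density is decreasing on $[0,\infty)$, hence unimodal with mode $0$; under case (ii), log-concavity of $x\mapsto f_{E_1}(e^x)$ delivers a unimodal density on the positive support. For the inductive step, $S_{T-1}$ is unimodal by the inductive hypothesis and is independent of $E_T$ since it is a measurable function of $E_1,\dots,E_{T-1}$; applying the defining property of $\mathcal{E}_{\rm MSU}$ to $E_T$ and the unimodal $S_{T-1}$ shows that $S_T=E_T\cdot S_{T-1}$ has a unimodal density, so $S_T\in\mathcal{E}_{\rm U}$.

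The points worth watching are (a) the convention allowing a point mass at the left end-point in the definition of $\mathcal{E}_{\rm D}$, which is invoked in part~(i) whenever $S_{T-1}$ or $E_T$ places mass at $0$, and (b) using the correct direction of the Cuculescu--Theodorescu characterization to obtain unimodality of each $E_t\in\mathcal{E}_{\rm MSU}$ for the base of the induction. Neither of these is substantively difficult; the proposition is essentially a bookkeeping consequence of the product-density formula together with the very definition of the MSU class.
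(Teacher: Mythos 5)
Your proof is correct and fills in exactly the reasoning the paper leaves implicit (the paper offers no proof, stating only that the proposition is ``immediately apparent''): part (i) via the product-density formula and monotonicity passing through the expectation, with the point mass at $0$ absorbed by the convention for $\mathcal{E}_{\rm D}$, and part (ii) via induction using the Cuculescu--Theodorescu characterization to get unimodality of the base case and the defining MSU property for the inductive step. No gaps.
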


Proposition \ref{prop:e-process} (i) tells us that, as long as we have one e-variable with a decreasing density on $[0, \infty)$ that is independent of other e-variables, then we can take the product and boost the threshold by 2.

When constructing e-processes based on betting strategies, one often uses $1-\lambda_t + \lambda_t E_t$ instead of $ E_t$, for some $\lambda_t$, possibly depending on previous e-values, which takes values in $[0,1]$; see \cite{S21} and \cite{WR24}. For $\mathcal E$ being any of $ \mathcal E_{\rm D}$, $ \mathcal E_{\rm U}$, $\mathcal E_{\rm LCD}$ and $\mathcal E_{\rm LCF}$, we have the convenient property that if $E\in \mathcal E$, then $1-\lambda + \lambda E\in \mathcal E$ for all constant $\lambda \in [0,1]$. This holds because properties such as monotonicity, unimodality, and log-concavity are invariant to changes in location and scale. However, part (i) of Proposition \ref{prop:e-process} requires that $E_T$ is decreasing on $[0, \infty)$, so it is not valid to apply it to $1-\lambda+\lambda E$ in this case.

\subsection{Weighted average of independent e-values}

We now consider the weighted average of independent e-variables. Note that a weighted average of e-variables is an e-variable regardless of the dependence structure, and using the weighted average is the only admissible way of merging arbitrarily dependent e-values, as shown by \cite{VW21, W25}. However, the following result relies on the independence assumption for an improved threshold. 

\begin{proposition}\label{prop:chernoff-bound-lcd}
        Let $E_t \in \mathcal{E}_{\rm LCD}$ for $t \in [T]$ be independent e-variables, $(w_1,\dots,w_T)$ is in the standard simplex, and $M_T = \sum_{t = 1}^{T} w_t E_t$. Denote by $w_{(1)} = \min_t w_t$. For $\gamma \in (0, 1]$, 
 $$\p(M_T\ge 1/\gamma)\leq \left(\gamma^{-w_{(1)}}e^{-w_{(1)}(1/\gamma - 1)}\right) \wedge R_\gamma(\mathcal{E}_{\rm U}).$$
\end{proposition}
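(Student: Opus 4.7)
My plan is to prove each of the two bounds separately, since the result is the minimum of them.

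For the $R_\gamma(\mathcal{E}_{\rm U})$ bound, I would show $M_T \in \mathcal{E}_{\rm U}$ directly. Clearly $M_T \ge 0$ and $\E[M_T] = \sum_t w_t \E[E_t] \le \sum_t w_t = 1$, so $M_T \in \mathcal{E}_0$. Each $w_t E_t$ has a log-concave density (positive scaling preserves log-concavity of densities), and by Prékopa's theorem (or the Ibragimov 1956 result on convolutions of log-concave/unimodal densities that the paper already cites), the density of the independent sum $M_T = \sum_t w_t E_t$ is log-concave on $\R$. A log-concave density is unimodal, so $M_T \in \mathcal{E}_{\rm U}$, and the first inequality follows from Theorem \ref{th:dec-uni}(iii).

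For the Chernoff-type bound $\gamma^{-w_{(1)}} e^{-w_{(1)}(1/\gamma - 1)}$, the natural route is Markov applied to $e^{sM_T}$ for a suitable $s > 0$, exploiting independence:
$$\p(M_T \ge 1/\gamma) \le e^{-s/\gamma}\prod_{t=1}^T \E[e^{sw_t E_t}].$$
The individual MGFs are controlled by log-concavity: since $\mathcal{E}_{\rm LCD} \subseteq \mathcal{E}_{\rm LCS}$, Theorem \ref{thm:log-distribution}(i) gives the tail bound $\p(E_t \ge x) \le e^{1-x}$ for $x \ge 1$, and tail integration yields $\E[e^{\lambda E_t}] \le e^{\lambda}/(1-\lambda)$ for $\lambda \in (0,1)$, hence $\E[e^{sw_t E_t}] \le e^{sw_t}/(1-sw_t)$ for $sw_t < 1$. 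The concavity of $w \mapsto \log(1-sw)$ over the standard simplex puts its minimum at a vertex, giving the product bound $\prod_t(1-sw_t) \ge 1-s$, and optimizing in $s$ would yield the claimed shape.

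The main obstacle is pinning down the exact choice of $s$ and algebraic simplification that produces the specific exponent $w_{(1)}$; naive Chernoff tends to yield $\gamma^{-1} e^{1-1/\gamma}$ without a role for $w_{(1)}$. A cleaner alternative I would fall back on is to go one step further with Prékopa: $M_T$ itself lies in $\mathcal{E}_{\rm LCD} \subseteq \mathcal{E}_{\rm LCS}$, so Theorem \ref{thm:log-distribution}(i) applied directly to $M_T$ gives $\p(M_T \ge 1/\gamma) \le e^{1-1/\gamma}$. The stated bound then follows from the elementary inequality $e^{1-1/\gamma} \le \gamma^{-w_{(1)}} e^{w_{(1)}(1-1/\gamma)}$, which after taking logs becomes $(1-w_{(1)})(1-1/\gamma) + w_{(1)}\log\gamma \le 0$; both summands are nonpositive for $\gamma \in (0,1]$ and $w_{(1)} \in [0,1]$, so the inequality is immediate.
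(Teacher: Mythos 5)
Your proof is correct, but it reaches the Chernoff-type term by a genuinely different route than the paper. The paper's proof of the first bound is a one-line citation: Theorem 1.6 of Marsiglietti and Melbourne (2022) gives $\p(M_T \ge t) \le e^{-w_{(1)}(t-1-\log t)}$ for $t \ge 1$, and substituting $t = 1/\gamma$ yields exactly $\gamma^{-w_{(1)}}e^{-w_{(1)}(1/\gamma-1)}$; the $R_\gamma(\mathcal{E}_{\rm U})$ term is obtained precisely as you do, via stability of log-concave densities under scaling and independent convolution (Ibragimov) and the fact that a log-concave density is unimodal. Your fallback argument is the real proof and it is sound: the same convolution stability places $M_T$ itself in $\mathcal{E}_{\rm LCD} \subseteq \mathcal{E}_{\rm LCS}$, so part (i) of Theorem \ref{thm:log-distribution} gives $\p(M_T \ge 1/\gamma) \le e^{1-1/\gamma}$, and your elementary inequality $(1-w_{(1)})(1-1/\gamma) + w_{(1)}\log\gamma \le 0$ holds because both summands are nonpositive for $\gamma \in (0,1]$ and $w_{(1)} \in [0,1]$. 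You should drop the inconclusive Chernoff sketch and lead with the fallback, making explicit the one fact you use twice (positive scaling and independent convolution preserve log-concavity of densities, i.e., the Pr\'ekopa/Ibragimov result the paper already cites for the unimodality half). What your route buys is notable: it shows the first term of the stated minimum is never the binding one, since $\gamma^{-w_{(1)}}e^{-w_{(1)}(1/\gamma-1)} \ge e^{1-1/\gamma}$ always, so the proposition could be strengthened to the $T$-free bound $e^{1-1/\gamma} \wedge R_\gamma(\mathcal{E}_{\rm U})$ (indeed to $R_\gamma(\mathcal{E}_{\rm LCS}) \wedge R_\gamma(\mathcal{E}_{\rm U})$), whereas the thresholds in Table \ref{tab:improved-threshold-mean} deteriorate as $T$ grows.
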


\begin{proof}
From Theorem 1.6 of \cite{marsiglietti2022moments}, we have
$\p(M_T \geq t) \leq e^{-w_{(1)}(t - 1 - \log t)}$ for all $t \geq 1$. Replacing $t = 1/\gamma$ for $\gamma \in (0, 1]$ completes the first bound. If $E \in \mathcal{E}_{\rm LCD}$ and $w$ is a non-zero real constant, then $wE \in \mathcal{E}_{\rm LCD}$. Further, since the convolution of log-concave density functions is unimodal (see, for instance, \cite{ibragimov1956composition}), we have $M_t \in \mathcal{E}_{\rm U}$, showing the second bound. 
\end{proof}


We provide in Table \ref{tab:improved-threshold-mean} some improved thresholds for the average of independent e-variables with log-concave densities (using equal weights for each e-variable). We provide thresholds based on Proposition \ref{prop:chernoff-bound-lcd}, but note that for $T = 1$, the bound in Part (ii) of Theorem \ref{th:log-transformed} holds and gives a smaller threshold in Table \ref{tab:improved-threshold}. The exponential bound provides the lower threshold for small values of $T$ and $\alpha$, while the unimodal bound takes over in other situations. 

\begin{table}[ht]
	\caption{Improved thresholds for the average of $T$ e-variables with log-concave densities.}\label{tab:improved-threshold-mean}
	\centering
	\begin{tabular}{rrrrrrr}
	    &        \multicolumn{6}{c}{$\alpha$}        \\
	 $T$   &  0.001 &  0.01 &  0.02 &  0.05 & 0.1 & 0.2 \\ 
        \cmidrule(lr){1-1}\cmidrule(lr){2-7}
		 1 &  10.23 &  7.64 &  6.83 &  5.74 & 4.9 & 2.5 \\
		 2 &  17.69 & 12.76 & 11.24 &  9.21 & 5.0 & 2.5 \\
		 5 &  39.21 & 27.33 & 23.73 & 10.00 & 5.0 & 2.5 \\
	   10 &  74.39 & 50.00 & 25.00 & 10.00 & 5.0 & 2.5 \\
	   20 & 144.13 & 50.00 & 25.00 & 10.00 & 5.0 & 2.5
	\end{tabular}
\end{table}

\section{Proofs for results in Section \ref{sec:boosting}}\label{app:boosting-proof}

In this section, we provide the proof of various results in Section \ref{sec:boosting} along with additional remarks.

\begin{proof}[Proof of Proposition \ref{prop:e-bh-decreasing}]
    	It suffices to find a sequence of random variables within $\mathcal{E}_{\rm D}$ to satisfy \eqref{eq:enhance-3} with equality, hence no boosting is possible (that is, $B^{\rm AD}_\alpha\left(\mathcal{E}_{\rm D}\right) = 1$). For any $w \in (0, 2\alpha)$, let $E$ be a mixture random variable of a uniform distribution on $[0,2/w]$ with weight $w$ and a point mass at $0$. Then, we have
	$$\E\left[\alpha E \id_{\{\alpha E \ge 1\}}\right] = \int_{1/\alpha}^{2/w} \alpha x \frac{w^2}{2} \d x = \alpha - \frac{w^2}{4\alpha}.$$
	Since we can make $w$ arbitrarily small, we have that $B^{\rm AD}_\alpha (\mathcal E_{\rm D}) = 1$.
\end{proof}

\begin{remark}
	Consider again the e-variable defined in the proof of Proposition \ref{prop:e-bh-decreasing}. If $w < \alpha / K$, then we have for any $b \geq 1$ that
	$$\E[T(\alpha b E)] \leq K w \leq \alpha,$$
	so, in this case, it is possible to boost the e-values by an arbitrary amount while still satisfying the criterion, but we could not boost the e-values at all using the simplified problem that does not rely on $K$.
\end{remark}

We will require the following lemma for the proof of Proposition \ref{th:ad-ld}.

\begin{lemma}\label{lemma:log-concave-stochastic-order}
    For  $E \in \mathcal{E}_{\rm LCS}$ and $t>0$, we have  $\E[E\vert E > t] \leq \E[E] + t.$
\end{lemma}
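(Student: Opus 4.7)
The plan is to derive the bound from the classical NBUE (New Better than Used in Expectation) property, which follows from log-concavity of the survival function. Write $\bar F(x) = \p(E > x)$ and $g = \log \bar F$, and assume $\p(E > t) > 0$ (else the statement is vacuous).

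The first step is to argue $\bar F(0) = 1$, i.e., $g(0) = 0$. Since $E \geq 0$ we have $\bar F \equiv 1$ on $(-\infty,0)$; a strict drop at $0$ would produce a downward jump in $g$, which is incompatible with concavity on all of $\R$. Hence $g$ is concave on $[0,\infty)$ with $g(0) = 0$.

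The second step is to deduce the NBU inequality $\bar F(s+u) \leq \bar F(s)\bar F(u)$ for $s,u \geq 0$, equivalently $g(s+u) \leq g(s) + g(u)$. This is the classical fact that a concave function vanishing at the origin is subadditive: writing $s = \tfrac{s}{s+u}(s+u) + \tfrac{u}{s+u}\cdot 0$ and applying concavity gives $g(s) \geq \tfrac{s}{s+u}\,g(s+u)$, and symmetrically $g(u) \geq \tfrac{u}{s+u}\,g(s+u)$; summing yields subadditivity.

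The final step is to integrate NBU in $u$ on $[0,\infty)$. Using the tail identity $\E[(E-t)_+] = \int_t^\infty \bar F(s)\,\d s$ and the substitution $s = t + u$,
\[
\E[(E-t)_+] = \int_0^\infty \bar F(t+u)\,\d u \leq \bar F(t)\int_0^\infty \bar F(u)\,\d u = \bar F(t)\,\E[E].
\]
Dividing by $\bar F(t) = \p(E > t)$ gives $\E[E - t \mid E > t] \leq \E[E]$, and adding $t$ to both sides yields the claim. I expect no substantive obstacle; the only mild subtlety is justifying $\bar F(0) = 1$ from log-concavity on $\R$, which is taken care of in the first step.
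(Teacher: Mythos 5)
Your proof is correct, but it takes a genuinely different route from the paper. The paper's proof is a two-line citation argument: it invokes Theorem 3.B.19(b) of Shaked and Shanthikumar to say that a log-concave survival function makes the residual life $(E-t \mid E>t)$ decreasing in $t$ in the dispersive order, then Theorem 3.B.13(a) to pass from the dispersive order to the usual stochastic order, concluding $\E[E-t\mid E>t]\le \E[E]$. You instead derive the same NBUE-type inequality from first principles: concavity of $g=\log\bar F$ together with $g(0)=0$ gives subadditivity $\bar F(s+u)\le\bar F(s)\bar F(u)$ (the NBU property), and integrating this in $u$ via the tail formula $\E[(E-t)_+]=\int_0^\infty \bar F(t+u)\,\d u$ yields $\E[(E-t)_+]\le \bar F(t)\E[E]$, hence the claim. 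Your preliminary step that $\bar F(0)=1$ is needed and is handled correctly (it is the same observation the paper makes inside the proof of its Theorem 4(i), that a log-concave survival function on $\R$ cannot have an atom at $0$; right-continuity of $\bar F$ plus the midpoint concavity inequality at $0$ makes this rigorous when $\p(E>t)>0$). What each approach buys: the paper's version is shorter but leans on nontrivial external results about the dispersive order; yours is self-contained, elementary, and makes transparent that only the NBU/subadditivity consequence of log-concavity is actually used. Both give exactly the stated bound.
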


\begin{proof}
    Since $E \in \mathcal{E}_{\rm LCS}$, we have from Theorem 3.B.19 (b) of \cite{SS07} that $(E - t \vert E > t)$ is decreasing in $t$ in the sense of the dispersive order. It follows from Theorem 3.B.13 (a) of \cite{SS07} that $(E - t \vert E > t)$ is smaller than $E$ under the usual stochastic order. Therefore, 
    $\E[E - t \vert E > t] \leq \E[E] \leq 1$ and $\E[E\vert E > t] \leq 1 + t.$
\end{proof}


\begin{proof}[Proof of Proposition \ref{th:ad-ld}]
Notice first that 
        \begin{equation}\label{eq:ad-boosting-decomposition}
            \E\left[\alpha b E \id_{\{\alpha b E \geq 1\}}\right] = \alpha b \p\left(E \geq \frac{1}{\alpha b}\right)\E\left[E \left\vert E \geq \frac{1}{\alpha b}\right.\right].
        \end{equation}
        We start with the upper bound. If $E$ is exponentially distributed with mean 1, \eqref{eq:ad-boosting-decomposition} becomes
        $$\E\left[\alpha b E \id_{\{\alpha b E \geq 1\}}\right] = \alpha b \exp\{-1/(\alpha b)\}\left(1 + \frac{1}{\alpha b}\right).$$ It follows that the problem in \eqref{eq:enhance-3} becomes 
        $$\exp\{-1/(\alpha b)\}\left(\alpha b + 1\right) = \alpha,$$
        and solving for $b$ leads to $c^{\rm AD}_2(\alpha)$, which gives an attainable boosting factor for a random variable in $\mathcal{E}_{\rm LS}$ and proves the upper bound for $B^{\rm AD}_\alpha\left(\mathcal{E}_{\rm LCS}\right)$. 
        
        
        For the lower bound, let us first find an upper bound for the quantity in \eqref{eq:ad-boosting-decomposition}. We may bound the probability term in \eqref{eq:ad-boosting-decomposition} by the bound in part (i) of Theorem \ref{thm:log-distribution} and bound the conditional expectation term by its upper bound of $1 + 1/(\alpha b)$; the latter bound follows from Lemma \ref{lemma:log-concave-stochastic-order}. This gives us
        $$\E\left[\alpha b E \id_{\{\alpha b E \geq 1\}}\right]\leq \alpha b \exp\left\{1 - \frac{1}{\alpha b}\right\}\left(1 + \frac{1}{\alpha b}\right).$$
        Solving for $b$ in 
        $$\alpha b \exp\left\{1 - \frac{1}{\alpha b}\right\}\left(1 + \frac{1}{\alpha b}\right) = \alpha$$
        proves the lower bound for $B^{\rm AD}_\alpha\left(\mathcal{E}_{\rm LCS}\right)$.
    \end{proof}

     \begin{remark}
	For any $\alpha \in (0, 1)$, we may obtain an analytical lower bound for $c^{\rm AD}_1(\alpha)$ and upper bound for $c^{\rm AD}_2(\alpha)$ that are slightly looser than the bounds in Proposition \ref{th:ad-ld}. Note that since 
		$$\exp\{-1/(\alpha b)\}\left(\alpha + 1\right) \leq \exp\{-1/(\alpha b)\}\left(\alpha b + 1\right),$$
		we may obtain an upper bound for $c^{\rm AD}_2(\alpha)$ by solving for $b$ in  $\exp\{-1/(\alpha b)\}\left(\alpha + 1\right) = \alpha,$  such that
		\begin{equation}\label{eq:c2_ad_upper_bound}
		    c^{\rm AD}_2(\alpha) \leq \frac{1}{\alpha \log \left(\frac{1 + \alpha}{\alpha}\right)} \leq -\frac{1}{\alpha \log \alpha}.
		\end{equation}
		Then, we have that $c^{\rm AD}_1(\alpha)$ is the solution for $b \geq 1$ in 
		\begin{equation}\label{eq:c1_lower_bound_1}
			\exp\left\{-1/(\alpha b)\right\}(1 + \alpha b) = \alpha \exp\{-1\}.
		\end{equation}
  We may obtain a lower bound for $c^{\rm AD}_1(\alpha)$ by replacing the second $b$ in \eqref{eq:c1_lower_bound_1} by its upper bound \eqref{eq:c2_ad_upper_bound}, hence a lower bound for $c^{\rm AD}_1(\alpha)$ is the solution for $b \geq 1$ in
 $\exp\left\{-1/(\alpha b)\right\}(1 - 1/\log (\alpha)) = \alpha \exp\{-1\},$ 
  which leads to 
  $$ \left(\alpha \left[\log \alpha - 1 + \log(-\log(\alpha)) - \log(1 - \log(\alpha))\right]\right)^{-1} \leq c_1^{\rm AD}(\alpha).$$
  Finally, let us remark that the bounds in Proposition \ref{th:ad-ld} are asymptotically tight as $\alpha \to 0$ since
  \begin{align*}
      \lim_{\alpha \to 0^+} \frac{c_2^{\rm AD}(\alpha)}{c_1^{\rm AD}(\alpha)} &= \lim_{\alpha \to 0^+} \frac{\log \alpha - 1 + \log(-\log(\alpha)) - \log(1 - \log(\alpha))}{\log \alpha} = 1.
  \end{align*}
\end{remark}

\begin{proof}[Proof of Proposition \ref{th:prds-ld}]
    	We have for any $x \ge 1/(\alpha b)$ that
        $$x\p(\alpha b E \geq x) = x\p\left(E \geq \frac{x}{\alpha b}\right) \leq x \exp \left\{1 - \frac{x}{\alpha b}\right\},$$
        where the inequality is due to part (i) of Theorem \ref{thm:log-distribution}. Note that $x \mapsto x\exp \left\{1 - \frac{x}{\alpha b}\right\}$ is unimodal with a mode at $\alpha b$, so it is a decreasing function of $x$ whenever $x \ge \alpha b$. Suppose that $1 \ge \alpha b$, then we have
        $$\max_{x \geq 1} x\p(\alpha b E \geq x) 
             \leq \max_{x \ge 1}x \exp \left\{1 - \frac{x}{\alpha b}\right\} \le \exp \left\{1 - \frac{1}{\alpha b}\right\} \leq \alpha.$$
        Solving the equation on the right-hand side with equality leads to $c^{\rm PR}_1(\alpha)$. One may verify that $1 \ge \alpha c_1^{\rm AD}(\alpha)$ whenever $\alpha \in (0, 1]$. Suppose that $1 \le \alpha b$, then 
     $$\max_{x \geq 1} x\p(\alpha b E \geq x) 
             \leq \max_{x \ge 1}x \exp \left\{1 - \frac{x}{\alpha b}\right\} \le \alpha b\exp \left\{1 - \frac{\alpha b}{\alpha b}\right\} = \alpha b.$$
     We, therefore, require $b = 1$ to satisfy \eqref{eq:enhance-4}. Note that we don't have $1 \ge \alpha b$ for $\alpha \in (0, 1]$, so a lower bound of 
     $B_\alpha^{\rm AD}(\mathcal{E}_{\rm LCS})$ is $c^{\rm PR}_1(\alpha)$ for $\alpha \in (0, 1]$. As for the upper bound, finding an attainable boosting factor for a random variable $E \in \mathcal{E}_{\rm LCS}$ suffices. Letting $E$ be exponentially distributed with mean 1, we must find $b$ such that 
     \begin{equation}\label{eq:b_sup_1}
         \max_{x \geq 1} x\p(\alpha b E \geq x) = \max_{x \geq 1} x \exp\left\{-x/(\alpha b)\right\} = \alpha.
     \end{equation}
     The function $x \mapsto x\exp \left\{- x/(\alpha b)\right\}$ is unimodal, where the mode is located at $\alpha b$. When $1 \le \alpha b$, the solution to \eqref{eq:b_sup_1} is $-1/(\alpha\log(\alpha))$, which is valid when $1 \le -\alpha /(\alpha\log(\alpha))$, or equivalently, when $\alpha \le \exp\{-1\}$. When $1 \le \alpha b$, the solution to \eqref{eq:b_sup_1} is $b = \exp\{1\}$; this occurs when $\alpha > \exp\{-1\}$.
    \end{proof}

\section{Additional simulation results}\label{app:more-simulations}

\subsection{Tests within the Gaussian family, continued}\label{ss:more-simulations-gaussian}

We continue the setting in Section \ref{ss:simulation-gaussian}. In Figure \ref{fig:e-process}, we present a simulated path for the process $\{S_n\}_{n\in [N]}$ (with its expected value $0.3\times n$) along with the e-processes and the supremum process for this example with $N = 1000$. One observes that for this sample path, the process $\{S_n\}_{n\in [N]}$ starts above its mean but reverts to it for the second half of the process. Hence, for this simulated path, the supremum process for $n$ under 300 is close to the e-process with $\mu = 0.4$ (since the empirical average is closer to 0.4 than 0.3). As the $\{S_n\}_{n\in [N]}$ approaches its mean for $n$ above 500, the supremum process matches the e-process with $\mu = 0.3$.

\begin{figure}[ht]
	\centering

        \includegraphics[width=\textwidth]{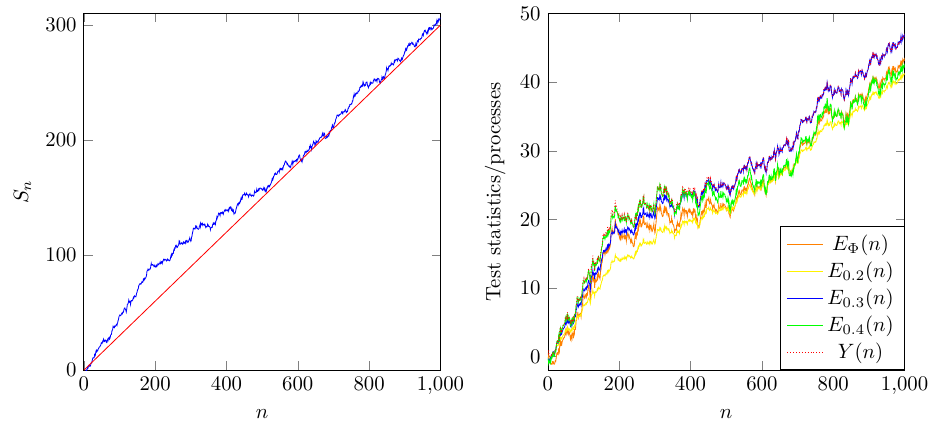}

	\caption{Left: Simulated path for one run of $S_n$; Right: E-processes and supremum process for the same run of $S_n$.}\label{fig:e-process}
\end{figure}

In Figure \ref{fig:e-process-average}, we present the average e-process and supremum process for 100 replications (where the average is taken over the log of the e-process and the supremum process). For the e-processes with the likelihood ratio, the average of the log is close to the expected value of the log e-process, which is $\delta n (\mu - \delta / 2)$. The mixture e-process becomes larger than the likelihood ratio e-processes with a misspecified alternative for $n$ larger than 600. The growth optimal e-process is the likelihood ratio e-process with the correct alternative, that is, $\{E_{\delta}(n)\}_{n\in [N]}$ (see, for instance, \cite{GDK24, S21}). The supremum process is larger than the growth-optimal e-process, but it does not lead to an anytime-valid test.

\begin{figure}[ht]
	\centering

        \includegraphics[width = 0.5\textwidth]{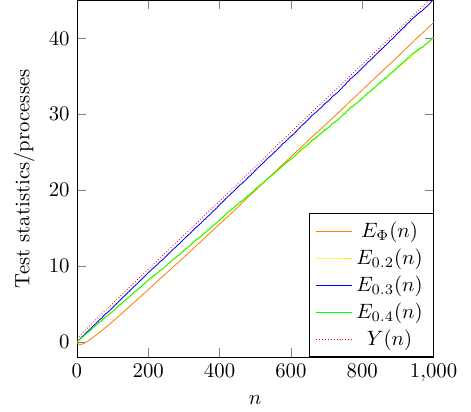}
    
	\caption{Average of the log e-process and supremum process.}\label{fig:e-process-average}
\end{figure}

We showed in Section \ref{ss:simulation-gaussian} that improved thresholds and taking the supremum of e-values lead to power improvements. Let us now validate that the tests control the type-I errors through simulations. In Table \ref{tab:rejection-rates-null}, we present simulated rejection rates when data come from the null $\mathrm{N}(0, 1)$ using the tests from Section \ref{ss:simulation-gaussian}. The type-I error is controlled at 0.05 for all tests. The type-I error is close to the control level for optional stopping strategies as $n$ increases, while the type-I error for the likelihood ratio e-processes decreases to 0 as $n$ increases. The test based on the supremum process using the improved threshold of $\mathcal{T}_{\alpha}(\mathcal{E}_{\rm LN})$ achieves a simulated type-I error equal to the target level, implying that it is not wasteful at this level.

\begin{table}[t]
	\centering
	\caption{Simulated rejection rates when testing within the Gaussian family when data come from the null $\mathrm{N}(0, 1)$, with a type-I error control of $\alpha=0.05$.}\label{tab:rejection-rates-null}
\begin{tabular}{rrrrrr}
                       &                                               &  \multicolumn{4}{c}{$n$}  \\
                  Test &                                     Threshold &   10 &   50 &  100 &  500 \\\hline
                       &       $\mathcal{T}_{\alpha}(\mathcal{E}_{0})$ & 0.00 & 0.00 & 0.01 & 0.00 \\
            Likelihood &   $\mathcal{T}_{\alpha}(\mathcal{E}_{\rm U})$ & 0.00 & 0.01 & 0.02 & 0.00 \\
                 ratio & $\mathcal{T}_{\alpha}(\mathcal{E}_{\rm LUS})$ & 0.00 & 0.02 & 0.02 & 0.00 \\
           $\mu = 0.2$ &  $\mathcal{T}_{\alpha}(\mathcal{E}_{\rm LN})$ & 0.01 & 0.05 & 0.05 & 0.01 \\
                       &    OS+$\mathcal{T}_{\alpha}(\mathcal{E}_{0})$ & 0.00 & 0.01 & 0.02 & 0.04 \\ \hline
                       &       $\mathcal{T}_{\alpha}(\mathcal{E}_{0})$ & 0.00 & 0.01 & 0.01 & 0.00 \\
            Likelihood &   $\mathcal{T}_{\alpha}(\mathcal{E}_{\rm U})$ & 0.00 & 0.02 & 0.01 & 0.00 \\
                 ratio & $\mathcal{T}_{\alpha}(\mathcal{E}_{\rm LUS})$ & 0.01 & 0.02 & 0.01 & 0.00 \\
           $\mu = 0.3$ &  $\mathcal{T}_{\alpha}(\mathcal{E}_{\rm LN})$ & 0.03 & 0.04 & 0.02 & 0.00 \\
                       &    OS+$\mathcal{T}_{\alpha}(\mathcal{E}_{0})$ & 0.00 & 0.02 & 0.03 & 0.04 \\ \hline
                       &       $\mathcal{T}_{\alpha}(\mathcal{E}_{0})$ & 0.00 & 0.01 & 0.00 & 0.00 \\
            Likelihood &   $\mathcal{T}_{\alpha}(\mathcal{E}_{\rm U})$ & 0.01 & 0.01 & 0.01 & 0.00 \\
                 ratio & $\mathcal{T}_{\alpha}(\mathcal{E}_{\rm LUS})$ & 0.01 & 0.02 & 0.01 & 0.00 \\
           $\mu = 0.4$ &  $\mathcal{T}_{\alpha}(\mathcal{E}_{\rm LN})$ & 0.04 & 0.03 & 0.01 & 0.00 \\
                       &    OS+$\mathcal{T}_{\alpha}(\mathcal{E}_{0})$ & 0.00 & 0.03 & 0.04 & 0.04 \\ \hline
                       &       $\mathcal{T}_{\alpha}(\mathcal{E}_{0})$ & 0.01 & 0.01 & 0.01 & 0.01 \\
              Supremum &   $\mathcal{T}_{\alpha}(\mathcal{E}_{\rm U})$ & 0.02 & 0.02 & 0.02 & 0.02 \\
           comonotonic & $\mathcal{T}_{\alpha}(\mathcal{E}_{\rm LUS})$ & 0.02 & 0.02 & 0.02 & 0.02 \\
                       &  $\mathcal{T}_{\alpha}(\mathcal{E}_{\rm LN})$ & 0.05 & 0.05 & 0.05 & 0.05 \\ \hline
	\multirow{3}{*}{Mixture} &       $\mathcal{T}_{\alpha}(\mathcal{E}_{0})$ & 0.00 & 0.00 & 0.00 & 0.00 \\
                       &   $\mathcal{T}_{\alpha}(\mathcal{E}_{\rm U})$ & 0.01 & 0.00 & 0.00 & 0.00 \\
                       &    OS+$\mathcal{T}_{\alpha}(\mathcal{E}_{0})$ & 0.01 & 0.02 & 0.03 & 0.03
\end{tabular}
\end{table}

\subsection{Tests within the gamma family}\label{ss:gamma-como}

We now illustrate the technique derived in Section \ref{sec:43} for obtaining maximum likelihood e-variables. Suppose we want to test whether two gamma distributions differ, using the scale and rate parameterization. The parameter vector is ${\theta} = (\alpha, \beta)$ and the natural parameter (in the context of exponential families) is ${\eta}({\theta}) = (\alpha - 1, -\beta)$.

For a null hypothesis 
$\alpha = \alpha_0$ and $\beta = \beta_0$, let us define the sets 
\begin{enumerate}[(a)]
    \item $\Theta_1 = \{(\alpha, \beta) : \alpha > 0;~ \beta > 0\}$;
    \item $\Theta_2 = \{(\alpha, \beta) : \alpha_0 < \alpha;~0 < \beta < \beta_0\}$;
    \item $\Theta_3 = \{(\alpha, \beta) : 0 < \alpha < \alpha_0;~ \beta_0 < \beta\}$.
\end{enumerate}

As mentioned in Section \ref{sec:43}, since ${\eta}({\theta}) - {\eta}({\theta}_0)$ have the same sign for the choices $\Theta' = \Theta_2$ or $\Theta' = \Theta_3$, the collection of likelihood ratio e-variables
$\{E_{\theta}(n) : \theta \in \Theta'\}$ is comonotonic.
Letting $(\hat{\alpha}, \hat{\beta})$ denote the maximum likelihood estimate of $(\alpha, \beta)$ over $\Theta_2$ or $\Theta_3$ based on data $(x_1, \dots, x_n)$, we can use the statistic
\begin{equation}\label{eq:como_test_gamma}
    Y(n) := \prod_{i = 1}^n \frac{\Gamma(\alpha_0)}{\Gamma(\hat{\alpha})}
\frac{\hat{\beta}^{\hat{\alpha}}}{\beta_0^{\alpha_0}}
x_i^{\hat{\alpha} - \alpha_0}
\exp\left\{-(\hat{\beta} - \beta_0)x_i\right\}
\end{equation}
with rejection threshold $1/\alpha$ to construct a valid test, as discussed in Section \ref{sec:43}. On the other hand, if $\Theta' = \Theta_1$, the collection of e-variables is no longer comonotonic and taking the supremum may not guarantee type-I error control. 

Let us validate through simulations that the test based on the supremum of comonotonic e-variables controls the type-I error 
over different alternative hypotheses. We simulate observations from a gamma distribution with $\alpha_0 = 1$ and $\beta_0 = 1$ (corresponding to an exponentially distributed random variable with a mean of 1). We test using the test statistic in \eqref{eq:como_test_gamma} for $n$ varying between 2 and 50. 
\begin{figure}[ht]
	\centering

        \includegraphics{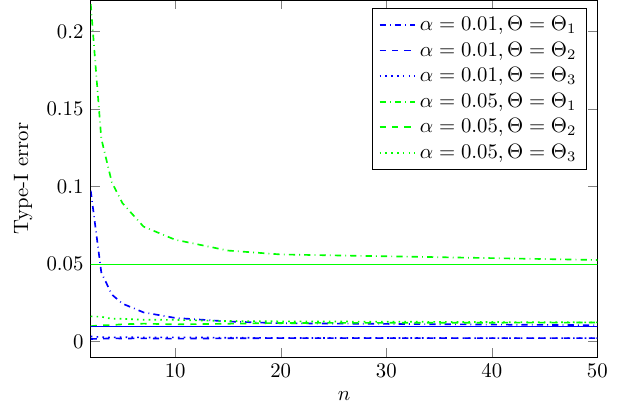}

	\caption{Simulated type-I errors for the gamma supremum test at levels 0.01 and 0.05. Full horizontal lines correspond to the type-I error control level. }\label{fig:sim-gamma}
\end{figure}
In Figure \ref{fig:sim-gamma}, we present simulated type-I errors at control levels 0.01 and 0.05 for 100,000 replications. When using the supremum test over $\Theta_2$ or $\Theta_3$, we obtain valid type-I error control no matter the number of observations, validating Proposition \ref{prop:sup-comonotonic}. In that case, the simulated type-I error is about four times smaller than the control level. On the other hand, the supremum test over $\Theta_1$ is not valid since the collection $\{E_{\theta}(n) : \theta \in \Theta_1\}$ is not comonotonic. Indeed, we observe that the rejection thresholds for the supremum test over $\Theta_1$ are above the control level for each value of $n$. 

Assume now that data come from a gamma distribution with parameters $\alpha = 1.1$ and $\beta = 0.9$, such that the random variable's mean is $11/9$. We use two test statistics to test the null hypothesis $\alpha_0 = \beta_0 = 1$. The first is the likelihood ratio e-variable using the true alternative that $\alpha = 1.1$ and $\beta = 0.9$. The second is against the composite alternative hypothesis that $(\alpha, \beta) \in \Theta_2 = \{(\alpha, \beta) : \alpha_0 < \alpha;~0 < \beta < \beta_0\}$ using the supremum test statistic. 
\begin{figure}[t]
	\centering
	\includegraphics{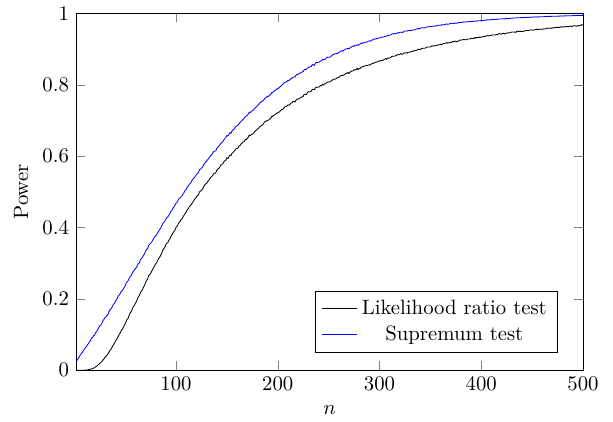}


	\caption{Simulated power when testing within the gamma family. The likelihood ratio e-test is against the true alternative, while the supremum test is against the alternative $\alpha > \alpha_0$ and $\beta_0 > \beta > 0$.}\label{fig:sim-gamma-power}
\end{figure}
We present power curves in Figure \ref{fig:sim-gamma-power}. Using the supremum test over the likelihood ratio test enhances power by a factor of 1.81 for $n = 50$, 1.17 for $n = 100$, 1.09 for $n = 200$ and 1.03 for $n = 500$. To achieve a power of 0.9, we require around 268 observations using the supremum test and 341 observations using the likelihood ratio e-test.

\subsection{Improved threshold for universal inference, continued}\label{app:ui-simulation}

In Section \ref{ss:ui-simulation}, we conducted a simulation study for universal inference e-tests when testing the null hypothesis ${\rm N}(0, 1)$ against the alternative that data come from a mixture of two Gaussians. The motivating example in the Introduction uses a different setup, resulting in the e-values presented in Figure \ref{fig:ui-e-values}. The difference between the setup of Section \ref{ss:ui-simulation} and in the Introduction is that the alternative hypothesis in the Introduction was  $ 0.5{\rm N}(\mu_1, 1) + 0.5{\rm N}(\mu_2, 1)$ for unknown $(\mu_1, \mu_2)$, that is, the weights and variances in the Gaussian mixture was assumed known. We repeat the simulation study for this setup for completeness. 

In Figure \ref{fig:ui-e-values}, the e-values have a decreasing density over $[1, \infty)$, while the log-transformed e-values have a decreasing density over their positive domain. It is, therefore, reasonable to assume that the e-variables belong to $\mathcal{E}_{\rm D>1}$ and $\mathcal{E}_{\rm LD>0}$. 

Figure \ref{fig:ui-power-curve-app} shows power curves when the data come from $0.5{\rm N}(-\mu, 1) + 0.5{\rm N}(\mu, 1)$, where $\mu$ is a signal parameter. For $\mu = 0.5$, 
using the threshold of $T_{0.1}(\mathcal{E}_{\rm D>1})$ leads to 26\% more power compared with $T_{0.1}(\mathcal{E}_{0})$. Similarly, using the threshold $T_{0.1}(\mathcal{E}_{\rm LD>0})$ gives 35\% more power compared with the Markov bound. 

\begin{figure}[t]
	\centering
    \includegraphics{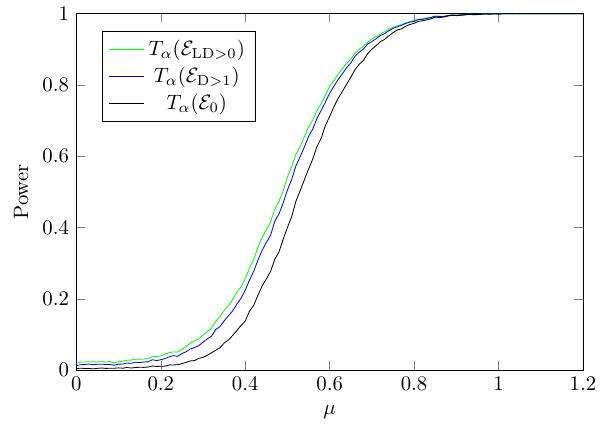}
			
   
                
                
	\caption{Power for universal inference e-tests with and without improved thresholds for the simplified Gaussian mixture model.}\label{fig:ui-power-curve-app}
\end{figure}

\subsection{The boosted e-BH, continued}\label{app:eBH}

To understand how well the boosted e-BH procedure performs, we compare the method with another e-BH procedure, but with the knowledge that the null e-variables are exponentially distributed with mean 1 (Table \ref{tab:e-BH-boosted-exact}). We also compare with the p-BH procedure, where we construct the p-values $p_k$ for the p-BH procedure as $\exp\{-e_k\}$ for $k \in \mathcal{K}$ (Table \ref{tab:p-BH-exact}). In this example, the PRDS property is not satisfied by the p-values $(p_1, \dots, p_K)$, so the p-BH procedure does not have a theoretically proven FDR control. Indeed, some numbers are slightly higher than $\alpha K_0/K$.

	\begin{table}[t]
		\centering
  \caption{Number of discoveries and realized FDP as percentages based on 1000 repetitions.}
    \begin{subtable}[b]{1\textwidth}
    \centering
		\caption{The boosted e-BH procedure with the knowledge of the null e-variable distribution}\label{tab:e-BH-boosted-exact}
		\begin{tabular}{rrrrrrr}
			              & \multicolumn{2}{c}{$b = 3$} & \multicolumn{2}{c}{$b = 4$} & \multicolumn{2}{c}{$b = 5$} \\ 			               \cmidrule(lr){2-3}\cmidrule(lr){4-5}\cmidrule(lr){6-7} 
			$\alpha$ (\%) & Discov. &          FDP (\%) & Discov. &          FDP (\%) & Discov. &          FDP (\%) \\ \hline
			            1 &       0 &                 0 &  157.99 &                 0 &  246.13 &                 0 \\
			            2 &    7.25 &                 0 &  200.71 &                 0 &  271.78 &                 0 \\
			            5 &  118.05 &                 0 &  252.11 &             0.001 &  309.00 &             0.003 \\
			           10 &  204.29 &             0.002 &  290.73 &             0.024 &  336.71 &             0.062 \\ \hline
		\end{tabular}
  \end{subtable}
\\\vspace{0.3cm}
  \begin{subtable}[b]{1\textwidth}
  \centering
		\caption{The p-BH procedure with  the knowledge of the null e-variable distribution}\label{tab:p-BH-exact} 
		\begin{tabular}{rrrrrrr}
			              & \multicolumn{2}{c}{$b = 3$} & \multicolumn{2}{c}{$b = 4$} & \multicolumn{2}{c}{$b = 5$} \\ 			 \cmidrule(lr){2-3}\cmidrule(lr){4-5}\cmidrule(lr){6-7} 
			$\alpha$ (\%) & Discov. &          FDP (\%) & Discov. &          FDP (\%) & Discov. &          FDP (\%) \\ \hline
			            1 &  298.58 &             0.503 &  340.52 &             0.488 &  368.48 &             0.490 \\
			            2 &  316.79 &             0.997 &  355.90 &             0.999 &  381.70 &             1.018 \\
			            5 &  346.01 &             2.562 &  381.84 &             2.479 &  405.95 &             2.533 \\
			           10 &  379.14 &             5.045 &  410.86 &             4.973 &  432.12 &             5.001 \\ \hline
		\end{tabular}
  
  \end{subtable}
	\end{table}

\end{appendix}

\end{document}